\def\cX{{\mathcal X}}
\newtheorem{theorem}{Theorem}[section]  
\newtheorem{proposition}{Proposition}[section]  
\newtheorem{Lemma}{Lemma}[section] 
\newtheorem{remark}{Remark}[section]  
\newtheorem{cor}{Corollary}[section]  
\def\dd'{\dot \Delta_{j'}}
\def\var{\varepsilon}
\def\tL{\tilde{L}}
\def\dB{\dot{B}}
\def\tilde{\widetilde}
\def\hat{\widehat}
\newcommand\R{\mathbb{R}}
\newcommand\Z{\mathbb{Z}}
\newcommand{\Tr}{\hbox{\rm{Tr}\,}}
\renewcommand{\div}{\mbox{\rm div}\;\!}
\newcommand{\curl}{\mbox{\rm curl}\,}
\def\cC{{\mathcal C}}
\def\cD{{\mathcal D}}
\def\cE{{\mathcal E}}
\def\cY{{\mathcal Y}}
\def\tilde{\widetilde}
\def\hat{\widehat}
\begin{document}

\title[The pressureless damped Euler-Riesz system]{The pressureless damped Euler-Riesz system in the critical regularity framework}

\author{Meiling Chi}
\address{School of Mathematics, Nanjing University of Aeronautics and Astronautics, Nanjing 211106, P. R. China}
\email{chiml@nuaa.edu.cn}

\author{Ling-Yun Shou}
\address{School of Mathematics, Nanjing University of Aeronautics and Astronautics, Nanjing 211106, P. R. China}
\email{shoulingyun11@gmail.com}

\author{Jiang Xu}
\address{School of Mathematics, Nanjing University of Aeronautics and Astronautics, Nanjing 211106, P. R. China}
\email{jiangxu\_79@nuaa.edu.cn}

\keywords{Pressureless Euler equations; Riesz interaction; Fractional diffusion; Critical regularity; Littlewood-Paley decomposition}

\begin{abstract} 
	
 We are concerned with a system governing the evolution of the pressureless compressible Euler equations with Riesz interaction and damping in $\mathbb{R}^{d}$ ($d\geq1$), where the interaction force is given by $\nabla(-\Delta)^{\smash{\frac{\alpha-d}{2}}}(\rho-\bar{\rho})$ with $d-2<\alpha<d$. Referring to the standard dissipative structure of first-order hyperbolic systems, the purpose of this paper is to investigate the weaker dissipation effect arising from the interaction force and to establish the global existence and large-time behavior of solutions to the Cauchy problem in the critical $L^p$ framework. More precisely, it is observed by the spectral analysis that the density behaves like fractional heat diffusion at low frequencies. Furthermore, if the low-frequency part of the initial perturbation is bounded in some Besov space $\dot{B}^{\sigma_1}_{p,\infty}$ with $-d/p-1\leq \sigma_1<d/p-1$, it is shown that the $L^p$-norm of the $\sigma$-order derivative for the density converges to its equilibrium at the rate $(1+t)^{-\smash{\frac{\sigma-\sigma_1}{\alpha-d+2}}}$, which coincides with that of the  fractional heat kernel.

	\end{abstract}
 
 \maketitle
\section{Introduction}

The motions of nonlocal interactions appear in many applications in the field of condensed matter physics, plasma physics and collective dynamics in biology \cite{ bz,fef, hg}. In this paper, we consider the pressureless compressible Euler-Riesz system with drag forces in $\R^d$ ($d\geq1$), which takes the form
\begin{equation}\label{E-R}
	\left\{\begin{array}{l}
		\partial_t\rho+\div(\rho u)=0,\\
		\partial_t(\rho u)+\div(\rho u\otimes u)
		=-\lambda\rho u-\kappa \rho\nabla\Lambda^{\alpha-d}(\rho-\bar{\rho}),
	\end{array}\right.
\end{equation}
where $\rho=\rho(t,x)\geq0$ and $u=u(t,x)\in\mathbb{R}^{d}$ denote the density and velocity of the fluid at the time $t$ and the position $x$, respectively. In addition, $\lambda>0$ is the damping coefficient, $\kappa>0$ stands for the coefficient representing the strength of the interaction force, and $\Lambda^{\alpha-d}=(-\Delta)^{\smash{\frac{\alpha-d}{2}}}$ denotes the fractional Laplacian operator with $d-2<\alpha<d$. 
Note that the case $\alpha=d-2$ corresponds to Coulomb interaction, and we refer to the range $d-2<\alpha<d$ as the Riesz interaction.

We investigate the Cauchy problem of the system \eqref{E-R} subject to the initial data 
\begin{equation}\label{d}
(\rho,u)(0,x)=(\rho_0,u_0)(x),\quad\quad x\in\mathbb{R}^{d} 
\end{equation}
 with the far-field behavior
 \begin{equation}
 (\rho_0,u_0)(x)\rightarrow (\bar{\rho},0)\quad\text{as}\quad |x|\rightarrow\infty,\nonumber
 \end{equation}
 where $\bar{\rho}>0$ is the constant background density.

The pressureless Euler-Riesz system \eqref{E-R} arises from the intricate particle dynamics corresponding to Newton's laws, where the non-local interactions among particles are characterized by Riesz potentials.  Serfaty \cite{Serfaty} derived the pressureless Euler-Riesz system from the second-order particle system in the sense of  \emph{mean field limits}. In our setting, the frictional effect is incorporated into the model to govern and ensure the system’s stability (cf. \cite{choi and jung}). Then, as in Serfaty \cite{Serfaty}, the dynamics of $N$  particles through with the interaction force $\nabla_{x}K$ is described as follows:
\begin{equation*}
	\left\{
 \begin{aligned}
		&\frac{dx_i(t)}{dt}=v_i,\\
		&\frac{dv_i(t)}{dt} =-\lambda v_{i}-\frac{\kappa }{N}\sum_{j\neq i}\partial_{x_i}K\big(x_i(t)-x_j(t)\big),\quad i=1,\cdots,N ,\quad t>0,
  \end{aligned}
\right.
\end{equation*}
where $x_i(t)$ and $v_i(t)$ denote the position and velocity of the $i$-th particle at time $t>0$.
The case $\kappa >0$ and $\kappa <0$ correspond to repulsive and attractive potentials, respectively.
The interaction potential $K$ is given by the Riesz kernel:
\begin{align*}
	K(x)=|x|^{-\alpha},\quad d-2<\alpha<d.
\end{align*}


\vspace{2mm}

The compressible Euler equations with non-local forces have been investigated extensively with many significant results.  Without the damping term $\lambda \rho u$ and in the case of the Poisson coupling (i.e. $\alpha=d-2$), the global dynamics of solutions has been studied in many different settings. For instance, Guo \cite{Guo1} constructed global radially symmetric smooth  solutions in 3D for small altitude data with irrotational velocity due to the dispersive effect of the electric field (see also \cite{IP} for the 2D case).  Tadmor and Wei \cite{TW1} obtained the global regularity of solutions in 1D for a class of large data. As for the  compressible Euler equations with Riesz interactions, Choi \cite{choi} proved the local well-posedness  of classical solutions and  exhibited sufficient conditions of finite time blow-up phenomena. We also mention that  Danchin and Ducomet \cite{DD1,DD2,DD3}  dealt with the well-posedness issue in the case that the density contains vacuum and that the initial velocity allows some reference vector field. For the one-dimensional pressureless Euler flows with non-local forces, there exist critical thresholds between the subcritical region with global regularity and the supercritical region with finite-time blow-up of classical solutions, cf. \cite{CCTT,CCZ,ELT}.

The study of the system \eqref{E-R} is strongly related to the mathematical theory of the compressible Euler equations with damping:
\begin{equation}\label{euler}
\left\{
\begin{aligned}
& \partial_{t}\rho+\div (\rho u)=0,\\
& \partial_{t}(\rho u)+\div (\rho u\otimes u)+\nabla P(\rho)+\lambda\rho u=0.
\end{aligned}
\right.
\end{equation}
Indeed, the system  \eqref{E-R} with $\alpha=d$ can be viewed as the classical damped Euler system \eqref{euler} with the special pressure $P(\rho)=\frac{\kappa}{2}\rho^2$. Without the damping term $\lambda \rho u$ in \eqref{euler},  it is well-known that classical solutions exist locally in time and may develop the singularity (for example, shock wave) in finite time (e.g., refer to  \cite{dafermos1,majda1}).  On the other hand, it was observed in \cite{stw2003, wy2001} that the term $\lambda \rho u$  can prevent the formation of singularities and guarantee the global-in-time existence of classical solutions for \eqref{euler} with small initial perturbations in Sobolev spaces $H^s(\mathbb{R}^d)$ with $s>d/2+1$.  As pointed out by the theory of Shizuta and Kawashima (refer to \cite{Kawashima,kawashima-yong-2004,kawashima-yong-2009,SK,UKS} and references therein), the eigenvalues of the linearized system for the damped Euler equations \eqref{euler} satisfy 
\begin{align}\label{standard}
{\rm Re}~\lambda(\xi) \leq -\frac{c|\xi|^{2}}{1+|\xi|^{^{2}}},
\end{align}
which falls into  the class of partially dissipative hyperbolic systems with the standard dissipative structure.
Based on the Shizuta-Kawashima condition, Xu and Kawashima \cite{xu-kawashima-2014,xu2,xu3} investigated the global existence and optimal time-decay rates of solutions for (\ref{euler}) in the critical inhomogeneous Besov space $B^{d/2+1}_{2,1}$. By using the theory due to Shizuta-Kawashima \cite{Kawashima,SK,UKS} and Beauchard-Zuazua \cite{BZ1}, Crin-Barat and Danchin \cite{Barat-Danchin,Barat-Danchin2022} developed a functional setting to establish the global existence and large time behavior of solutions for initial perturbations in the critical hybrid Besov space where the low frequencies are only bounded in $\dot{B}^{d/2}_{2,1}$ instead of $L^2$. The authors \cite{Barat-Danchin,Barat-Danchin2023} also  investigated the global existence in functional spaces where the low frequencies are bounded in more general $L^p$-type spaces with $p\geq2$ (see also \cite{xu-zhang} in the case of general pressure laws). A complete review can be found in \cite{danchinnote}. In addition, there are many significant investigations concerning relaxation limits of \eqref{euler} toward the porous medium system, cf. \cite{Barat-Danchin2023,CS,danchinnote,junca1,mar0,xu00} and references therein.


It should be noted that without the pressure term, the multi-dimensional damped Euler system \eqref{euler} would not have enough dissipation of $\rho$ to ensure the global existence due to the fact that it violates the Shizhuta-Kawashima condition. However, when the effects of non-local forces are taken into account, the Euler equations with non-local forces may possess enough dissipation and admit global-in-time solutions. Danchin and Mucha \cite{danchinmucha23}   investigated the damped Euler system with a fuzzy nonlocal force as the approximation of the classical pressure law $P(\rho)=\frac{\kappa}{2}\rho^2$, and established the global existence and relaxation limit of solutions in the critical $L^2$ framework. For the pressureless damped Euler-Riesz system \eqref{E-R}, 
Choi and Jung \cite{choi and jung} investigated the global existence of solutions when the initial data is a small perturbation of the equilibrium state $(\bar{\rho},0)$ with $\bar{\rho}>0$ in  $H^m(\Omega)\times H^{m+(d-\alpha)/2}(\Omega)$ ($m>d/2+2$), where $\Omega$ is either $\mathbb{T}^{d}$ or $\mathbb{R}^{d}$ . Furthermore, they \cite{choi and jung} showed that as the time evolves, the solutions converge to $(\bar{\rho},0)$ in $H^m(\Omega)\times H^{m+(d-\alpha)/2}(\Omega)$ at exponential or algebraic rates under the additional regularity assumptions of  Sobolev spaces with negative indexes. On the other hand, as $\lambda\rightarrow\infty$, one can expect that under a suitable scaling, the system \eqref{E-R} converge to  the fractional porous medium flow (cf.\cite{fpm-caffa-1,fpm-caffa-2,fpm-caffa-decay}):
\begin{align}\label{fpm}
	&\partial_t\rho 
	+\kappa \div(\rho \nabla\Lambda^{\alpha-d}\rho )=0.
\end{align}
By means of relative entropy estimates, Choi and Jeong \cite{choi-jeong-2} provided a rigorous justification of the relaxation limit from the scaled system \eqref{E-R} to the fractional porous medium flow \eqref{fpm} in a finite time-interval.



\vspace{2mm}

In the paper, we aim to develop a functional setting to study the global well-posedness and long-time behavior of solutions for the pressureless Euler-Riesz system \eqref{E-R} with initial data around the constant state $(\bar{\rho},0)$. Compared with the recent advances by Choi and Jung \cite{choi and jung}, our results contain a larger class of initial data with critical regularity. As pointed out by many works 
\cite{Barat-Danchin2023,dafermos1,danchinnote,majda1,xu-kawashima-2014} for general hyperbolic systems, the Lipschitz bound of solutions may be the minimal regularity to ensure the uniqueness and determine the blow-up mechanism. Thus, the high-frequency $\dot{B}^{d/2+1}_{2,1}$ assumption on the initial data is critical due to the end-point embedding $ \smash{\dot{B}^{d/2+1}_{2,1} \hookrightarrow \dot{W}^{1,\infty}}$. Moreover, in the low-frequency regime, according to the spectral behaviors on the linear analysis, we are able to consider more general spaces $\dot{B}^{s}_{p,1}$, where the restrictions $p\geq2$ and $s\leq d/p$ ensure the control of nonlinear terms and stay in Banach spaces.


Another goal of this paper is to reveal the effects of fractional diffusion caused by Riesz interactions on  regularity estimates and decay rates of solutions for \eqref{E-R}. Our approach is inspired by {\emph{hypercoercivity}} arguments developed in the recent works  \cite{Barat-Danchin,Barat-Danchin2022,Barat-Danchin2023,CS}. One of the main difficulties and interests is that, the pressureless damped Euler-Riesz system has the following dissipative property:
\begin{align}\label{fract}
{\rm Re}~\lambda(\xi)\leq -\frac{c|\xi|^{\alpha-d+2}}{1+|\xi|^{\alpha-d+2}},
\end{align}
where $\lambda(\xi)$ denotes the eigenvalues of the linearized system for \eqref{E-R}.
Despite the fact that the system \eqref{E-R} is not first-order hyperbolic due to the Riesz interactions, it is observed that the solutions exhibit a damping behavior for high frequencies $|\xi|\gtrsim 1$. However, as for low frequencies $|\xi|\lesssim1$, such dissipative rate is {\emph{not standard}} and weaker than from the standard one \eqref{standard} due to ${\rm Re}~\lambda(\xi)\lesssim -|\xi|^{\alpha-d+2}$ with $\alpha-d+2\in(0,2)$. Therefore, it is natural and interesting to extend the previous methods  concerning the dissipative structure of standard type (e.g., \cite{Barat-Danchin2023}) to the non-standard one involving fractional diffusion.

\subsection{Reformulation and spectral behavior}\label{subsection12}

Without loss of generality, we set $\lambda=\kappa =\bar{\rho}=1$ throughout the rest of this paper. Defining the fluctuation $a=\rho-1$, we reformulate  the Cauchy problem \eqref{E-R}-\eqref{d} as follows
\begin{equation}\label{E-Rli}
	\left\{\begin{array}{l}
		\partial_ta+\div u =-\div(a u),\\
		\partial_t u+ u+\nabla\Lambda^{\alpha-d}a
		=-u\cdot\nabla u,\\
       (a,u)(0,x)=(a_0,u_0)(x)
	\end{array}\right.
\end{equation}
with $\alpha\in(d-2,d)$ and $a_0\triangleq\rho_0-1.$

In order to understand the behavior of the solution to 
\eqref{E-Rli}, we analyse the eigenvalues of the linearized system. The linearization of \eqref{E-Rli} gives
\begin{equation}\label{E-R-lin}
	\left\{\begin{array}{l}
		\partial_ta+\div u =0,\\
		\partial_t u+ u+\nabla\Lambda^{\alpha-d}a
		=0.
	\end{array}\right.
\end{equation}
By the Hodge decomposition
$$\omega\triangleq \Lambda^{-1}\curl u,\quad\quad m\triangleq \Lambda^{-1}\div u,$$
the system \eqref{E-R-lin} can be rewritten as the $2\times 2$ coupled system
\begin{equation}\label{E-R-lin-hodge}
	\left\{\begin{array}{l}
		\partial_ta+\Lambda m=0,\\
		\partial_t m+  m-\Lambda^{\alpha-d+1}a
		=0,
	\end{array}\right.
\end{equation}
and the purely damped equation
$$\partial_t\omega+\omega=0.
$$
Employing the Fourier transform for \eqref{E-R-lin-hodge}, we have
\begin{equation*}
	\frac{d}{dt}\left(\begin{array}{l}
		\hat{a}\\
		\hat{m}
	\end{array}
	\right)
	=A(\xi)
	\left(\begin{array}{l}
		\hat{a}\\
		\hat{m}
	\end{array}
	\right),\quad\quad A(\xi)\triangleq\left(\begin{matrix}
	0	&  -|\xi|\\
	|\xi|^{\alpha-d+1}	& -1
\end{matrix}\right).
\end{equation*}
The eigenvalues of the matrix $ A(\xi)$ can be computed as
$$
\lambda_{1,2}=-\frac{1}{2}\pm
\frac{\sqrt{1-4|\xi|^{\alpha-d+2}}}{2}.$$
 Therefore, one can see that $\lambda_{1,2}$ satisfy the property \eqref{fract}. More precisely, 
\begin{itemize}
\item In the low-frequency regime $|\xi|\lesssim 1$, the eigenvalues $\lambda_{1}$ and $\lambda_{2}$ are both  real and asymptotically equivalent to $-|\xi|^{\alpha-d+2}$ and $-1$, respectively. This implies that the fractional diffusion effect and the damping effect coexist.

\item  In the high-frequency regime $|\xi|\gtrsim 1$, the eigenvalues $\lambda_{1}$ and $\lambda_{2}$ are complex conjugates and asymptotically approach
$-\frac12\pm i|\xi|^{\frac{\alpha-d+2}{2}}$ respectively.
\end{itemize}
The above spectral behaviors reveal the sharp dissipative structure of solutions in two different frequency regimes. Since the high frequencies decay exponentially, one expects that the optimal decay rates of solutions are exactly the same as those of the fractional heat equations. In addition, we observe that the real eigenvalues avoid dispersive effects in low frequencies, so it is possible to establish the global existence in general $L^p$-type spaces, where $p$ is not just $2$. These behaviors in different frequencies motivate us to employ a functional framework based on Littlewood-Paley decomposition and Besov spaces so as to reflect maximal regularity properties of solutions to the nonlinear problem \eqref{E-Rli}.

\subsection{Notations and functional spaces}
Before stating our main results, we list some notations that are used frequently throughout the paper.
For simplicity, $C$ denotes some generic positive constant. $A\lesssim B$ ($A\gtrsim B$) means that $A\leq C B$ ($A\geq C B$),  while $A\sim B$ means that both $A\lesssim  B$
and $A\gtrsim B.$ 
For $A=(A_{i,j})_{1\leq i,j\leq d}$ and $B=(B_{i,j})_{1\leq i,j\leq d}$ two $d\times d$ matrices, we denote $A:B=\Tr AB=\sum_{i,j}A_{i,j}B_{j,i}.$
For a Banach space $X$, $p\in[1, \infty]$ and
$T>0$, the notation $L^p(0, T; X)$ or $L^p_T(X)$ designates the set of measurable functions $f: [0, T]\to X$ with $t\mapsto\|f(t)\|_X$ in $L^p(0, T)$, endowed with the norm $\|\cdot\|_{L^p_{T}(X)} \triangleq\|\|\cdot\|_X\|_{L^p(0, T)}$, and $\mathcal{C}([0,T];X)$ denotes the set of continuous functions $f: [0, T]\to X$. Let $\mathcal{F}(f)=\widehat{f}$ and $\mathcal{F}^{-1}(f)=\breve{f}$ be the Fourier transform of $f$ and its inverse.


Then, we recall the Littlewood-Paley decomposition and the definitions of Besov spaces. The reader can refer to Chapters 2 and 3 in \cite{BCD} for more details. 
Choose a smooth radial non-increasing function $\chi(\xi)$  compactly supported in $B(0,\frac{4}{3})$ and satisfying $\chi(\xi)=1$ in $B(0,\frac{3}{4})$. Then, $\varphi(\xi)\triangleq\chi(\xi/2)-\chi(\xi)$ satisfies
$$
\sum_{j\in \mathbb{Z}}\varphi(2^{-j}\cdot)=1,\quad \text{{\rm{Supp}}}~ \varphi\subset \Big{\{}\xi\in \mathbb{R}^{d}~|~\frac{3}{4}\leq |\xi|\leq \frac{8}{3}\Big{\}}.
$$
For any $j\in \mathbb{Z}$, define the homogeneous dyadic block
$$
u_j=\dot{\Delta}_{j}u\triangleq\mathcal{F}^{-1}\big{(} \varphi(2^{-j}\cdot )\mathcal{F}(u) \big{)}=2^{jd}h(2^{j}\cdot)\star u,\quad\quad h\triangleq\mathcal{F}^{-1}\varphi.
$$
We also define the low-frequency cut-off operator
\begin{align}
	\dot{S}_{j}\triangleq\sum_{j'\leq j-1}\dot{\Delta}_{j'}.\nonumber
\end{align}
Let $\mathcal{S}_{h}'$ stand for the set of tempered distributions $z$ on $\mathbb{R}^{d}$ such that $\dot{S}_{j}z\rightarrow0$ uniformly as $j\rightarrow\infty$ (i.e., modulo polynomials). Then $u$ has the decomposition 
\begin{equation}\nonumber
	\begin{aligned}
		&u=\sum_{j\in \mathbb{Z}}\dot{\Delta}_{j}u\quad\text{in}~\mathcal{S}',\quad \forall u\in \mathcal{S}_{h}' \quad \text{with} \quad \dot{\Delta}_{j}\dot{\Delta}_{l}u=0,\quad\text{if}\quad|j-l|\geq2.
	\end{aligned}
\end{equation}

Due to those dyadic blocks, we give the definitions of homogeneous Besov spaces and mixed space-time Besov spaces as follow. For $s\in \mathbb{R}$ and $1\leq p,r\leq \infty$, the  homogeneous Besov space $\dot{B}^{s}_{p,r}$ is defined by
$$
\dot{B}^{s}_{p,r}\triangleq\big{\{} u\in \mathcal{S}_{h}'~:~\|u\|_{\dot{B}^{s}_{p,r}}\triangleq\|\{2^{js}\|\dot{\Delta}_{j}u\|_{L^{p}}\}_{j\in\mathbb{Z}}\|_{l^{r}}<\infty \big{\}} .
$$
When $p=r=2$, the space $\dot{B}^{s}_{2,2}$ is equivalent to the homogeneous Sobolev space $\dot{H}^{s}(\mathbb{R}^d)$.

Furthermore, we recall a class of mixed space-time Besov spaces $\widetilde{L}^{\varrho}(0,T;\dot{B}^{s}_{p,r})$ introduced by Chemin-Lerner \cite{chemin-lerner}:
$$
\widetilde{L}^{\varrho}(0,T;\dot{B}^{s}_{p,r})\triangleq \big{\{} u\in L^{\varrho}(0,T;\mathcal{S}'_{h})~:~ \|u\|_{\widetilde{L}^{\varrho}_{T}(\dot{B}^{s}_{p,r})}\triangleq\|\{2^{js}\|\dot{\Delta}_{j}u\|_{L^{\varrho}_{T}(L^{p})}\}_{j\in\mathbb{Z}}\|_{l^{r}}<\infty
\big{\}}
$$
for $T>0$, $s\in\mathbb{R}$ and $1\leq \varrho,r \leq \infty$. By Minkowski's inequality, it holds
\begin{equation}\nonumber
\begin{aligned}
&\|u\|_{\widetilde{L}^{\varrho}_{T}(\dot{B}^{s}_{p,r})}\leq(\geq) \|u\|_{L^{\varrho}_{T}(\dot{B}^{s}_{p,r})},\quad\text{if}~r\geq(\leq)\varrho,
\end{aligned}
\end{equation}
where $\|\cdot\|_{L^{\varrho}_{T}(\dot{B}^{s}_{p,r})}$ is the usual Lebesgue-Besov norm. 

Let the threshold  $J_{1}$ between low and high frequencies be given by \eqref{J_1}. In order to restrict Besov norms to the low frequency part and the high-frequency part, we write $\|\cdot\|_{\dot B_{q_1,r}^{s_1}}^{\ell}$ and $\|\cdot\|_{\dot B_{q_2,r}^{s_2}}^{h}$
 to denote Besov semi-norms, that is,
\begin{equation*}\label{DefHLB}
	\|u\|_{\dot B_{q_1,r}^{s_1}}^{\ell}\triangleq \Big(\sum_{j\leq J_{1}}
	\big(2^{s_1j}\|\dot \Delta_j u\|_{L^{q_1}}\big)^r\Big)^{\frac{1}{r}}
	\quad \mbox{and}\quad
	\|u\|_{\dot B_{q_2,r}^{s_2}}^{h}\triangleq \Big(\sum_{j\geq J_{1}-1}
	\big(2^{s_2j}\|\dot \Delta_j u\|_{L^{q_2}}\big)^r\Big)^{\frac{1}{r}}.
\end{equation*}
One can deduce that for all $\sigma_0>0$,
\begin{equation}\label{HLEst}
	\|u\|_{\dot B_{q_1,r}^{s_1}}^{\ell}
	\leq
	2^{\sigma_0J_{1}}\|u\|_{\dot B_{q_1,r}^{s_1-\sigma_0}}^{\ell},
    \quad
	\|u\|_{\dot B_{q_2,r}^{s_2}}^{h}
	\leq
	2^{-\sigma_0J_{1}+\sigma_0}\|u\|_{\dot B_{q_2,r}^{s_2+\sigma_0}}^{h}
\end{equation}
and
\begin{equation}\label{low-1-infty}
\|u\|_{\dot B_{q_1,1}^{s}}^{\ell}
	\leq
	\sum_{j\leq J_{1}}2^{j\sigma_0}\|u\|_{\dot B_{q_1,\infty}^{s-\sigma_0}}^{\ell}
	\lesssim
	2^{J_{1}\sigma_0}\|u\|_{\dot B_{q_1,\infty}^{s-\sigma_0}}^{\ell}.
\end{equation}
Denote by $\dot{B}^{s_1,s_2}_{p,2}$  the hybrid space in $\dot{S}'_{h}$ endowed with the norm 
$$
\|\cdot\|_{\dot{B}^{s_1}_{p,1}}^{\ell}+\|\cdot\|_{\dot{B}^{s_2}_{2,1}}^{h}.
$$
We also introduce the low-high-frequency decomposition $u=u^{\ell}+u^{h}$ with
\begin{equation}\nonumber
	u^\ell\triangleq\sum_{j\leq J_1-1} \dot\Delta_j u=\dot{S}_{J_1}u
	\quad\mbox{and}\quad
	u^h\triangleq\sum_{j\geq J_1} \dot\Delta_j u=( {\rm{Id}}-\dot{S}_{J_1})u.
\end{equation} 
It is easy to check that
\begin{equation*}
 \|u^{\ell}\|_{\dot B_{q_1,r}^{s}}
	\leq
	\|u\|_{\dot B_{q_1,r}^{s}}^{\ell}
	\quad\mbox{and}\quad
	\|u^{h}\|_{\dot B_{q_2,r}^{s}}
	\leq
	\|u\|_{\dot B_{q_2,r}^{s}}^{h}.
\end{equation*}
\medbreak

\subsection{Main results}

We now state our first  results as follows about the global existence and uniqueness of solutions to the Cauchy problem \eqref{E-Rli}. 

\begin{theorem}\label{global}
Let $d\geq1$, $d-2<\alpha<d$, $s_*\triangleq\frac{\alpha-d+2}{2}\in (0,1)$
and
\begin{equation}\label{p}
\left\{\begin{array}{l}
2\leq p\leq4,\quad\quad \quad  \text{if}\quad  1\leq d\leq4,\\
2\leq p\leq\frac{2d}{d-2},\quad \quad \text{if}\quad d\geq 5.
\end{array}\right.
\end{equation}
There is a constant $\delta_0>0$ depending only  on $d$, $p$ and $ \alpha$ such that if the initial datum $(a_0,u_0)$ fulfills
$a_0\in\dot B^{\frac dp-1,\frac{d}{2}+1}_{p,2}$, $u_0\in
\dot B^{\frac dp,\frac{d}{2}+2-s_*}_{p,2}$ and
\begin{equation}\label{x-p-0}
\cE_{p,0}\triangleq\|a_0\|_{
\dot B^{\frac dp-1}_{p,1}}^{\ell}+\|u_0\|_{\dot B^{\frac dp}_{p,1}}^{\ell}
+\|a_0\|_{\dot B^{\frac d2+1}_{2,1}}^{h}
+\|u_0\|_{\dot B^{\frac{d}{2}+2-s_*}_{2,1}}^{h}
\leq \delta_0,
\end{equation}
then the Cauchy problem \eqref{E-Rli} admits a unique global strong solution $(a,u)$ satisfying 
\begin{equation}\label{global-Var0}
\begin{aligned}
&\|a\|_{\tilde L_t^\infty(\dot B^{\frac dp-1}_{p,1})
}^{\ell}
+\|u\|_{\tilde L_t^\infty(\dot B^\frac dp_{p,1})}^{\ell}
+\|a\|_{\tilde L_t^\infty(\dot B^{\frac d2+1}_{2,1})}^{h}
+\|u\|_{\tilde L_t^\infty(\dot B^{\frac{d}{2}+2-s_*}_{2,1})}^{h}\\
&\quad+\|a\|_{L_t^1(\dot B^{\frac dp-1+2s_*}_{p,1})}^{\ell}
+\|u\|_{L_t^1(\dot B^{\frac dp}_{p,1})}^{\ell}\\
&\quad+\|a\|_{L_t^1(\dot B^{\frac d2+1}_{2,1})}^{h}
+\|u\|_{L_t^1(\dot B^{\frac{d}{2}+2-s_*}_{2,1})}^{h}\leq C\cE_{p,0},\quad\quad t>0,
\end{aligned}
\end{equation}	
 where $C>0$ is a universal constant.
\end{theorem}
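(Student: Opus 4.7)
The overall strategy follows the standard scheme for critical-regularity global existence. I would construct approximate solutions via a Friedrichs spectral truncation of \eqref{E-Rli}, derive uniform a priori estimates on the hybrid norm
\begin{equation*}
  \cX_p(t) \triangleq \|a\|_{\tilde L_t^\infty(\dot B^{d/p-1}_{p,1})}^\ell
  +\|u\|_{\tilde L_t^\infty(\dot B^{d/p}_{p,1})}^\ell
  +\|a\|_{\tilde L_t^\infty(\dot B^{d/2+1}_{2,1})}^h
  +\|u\|_{\tilde L_t^\infty(\dot B^{d/2+2-s_*}_{2,1})}^h+(\textrm{dissipation norms}),
\end{equation*}
matching the left-hand side of \eqref{global-Var0}, pass to the limit by compactness, and close a continuity argument using the smallness \eqref{x-p-0}. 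Uniqueness would be handled separately at a one-derivative-lower regularity level via an energy estimate on the difference of two solutions. The heart of the proof is therefore the closed inequality $\cX_p(t)\lesssim \cE_{p,0}+\cX_p(t)^2$, obtained from frequency-localized dyadic estimates split against the threshold $J_1$.

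At low frequencies $j\leq J_1$, where the eigenvalues of $A(\xi)$ from Subsection \ref{subsection12} are real and asymptotic to $-|\xi|^{2s_*}$ and $-1$, I would apply $\dot\Delta_j$ to \eqref{E-Rli}, use the Hodge decomposition, and introduce an effective unknown $W$ tailored to the Riesz coupling (roughly $W = \Lambda^{-1}\div u - \Lambda^{\alpha-d+1} a$) so that, modulo harmless lower-order terms,
\begin{equation*}
  \partial_t a + \Lambda^{2s_*} a = -\Lambda W + \textrm{nonlinear},
  \qquad
  \partial_t W + W = \textrm{small}+\textrm{nonlinear}.
\end{equation*}
The first equation is a fractional heat equation with smoothing of order $2s_*=\alpha-d+2$, and the second is a pure damping equation for $W$. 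A dyadic $L^p$ energy estimate using the Bernstein inequality $\|\Lambda^{2s_*}\dot\Delta_j a\|_{L^p}\geq c\,2^{2s_* j}\|\dot\Delta_j a\|_{L^p}$ then gives, after multiplication by $2^{j(d/p-1)}$ and $2^{jd/p}$ respectively, summation over $j\leq J_1$, and time integration, the low-frequency portion of \eqref{global-Var0}. The absence of dispersion in the real-eigenvalue regime is what permits the $L^p$ ($p\neq 2$) estimate to close; the divergence-free part of $u$ is handled separately by the purely damped equation $\partial_t\omega+\omega=\textrm{nonlinear}$.

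At high frequencies $j>J_1$, the eigenvalues are complex conjugate with real part $-1/2$, giving exponential decay but no additional smoothing. Since the Riesz operator is scale-invariant only in $L^2$-based Besov spaces, I would work in $\dot B^{d/2+1}_{2,1}\times \dot B^{d/2+2-s_*}_{2,1}$ and use a shifted $L^2$ Lyapunov functional of the form
\begin{equation*}
  \cL_j^2 \triangleq \|\dot\Delta_j a\|_{L^2}^2+\|\dot\Delta_j u\|_{L^2}^2
  +2\eta\int \dot\Delta_j u\cdot\nabla\Lambda^{\alpha-d}\dot\Delta_j a\,dx,
\end{equation*}
with $\eta$ small enough that $\cL_j$ is positive definite at high frequencies and its time derivative exhibits damping in both $a$ and $u$. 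Summation against $2^{j(d/2+1)}$ and $2^{j(d/2+2-s_*)}$ reproduces the high-frequency part of \eqref{global-Var0}. The nonlinearities $\div(au)$ and $u\cdot\nabla u$ are controlled by the Bony paradifferential calculus in hybrid Besov spaces; the restriction \eqref{p} enters precisely at this stage, ensuring the low-frequency embedding $\dot B^{d/p}_{p,1}\hookrightarrow \dot B^{d/2}_{2,1}$ (requiring $p\geq 2$) and the tame product estimate $\|fg\|_{\dot B^{d/p-1}_{p,1}}^\ell \lesssim \cX_p(t)^2$ (requiring $p\leq 2d/(d-2)$, or $p\leq 4$ in low dimension, so that the scaling closes).

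The main obstacle is the low-frequency analysis: the dissipation is of fractional order $2s_*\in(0,2)$ rather than standard quadratic order, so we cannot directly apply the hypocoercive estimates developed by Crin-Barat and Danchin for the pressurized damped Euler case $\alpha=d$ as black-boxes. Constructing the effective unknown $W$ and verifying the required $L^p$ maximal-regularity estimate for the fractional heat equation on each low-frequency dyadic block, compatibly with the damping of $W$ and the hybrid threshold $J_1$, is the technical novelty of the proof. Once the closed inequality $\cX_p(t)\lesssim \cE_{p,0}+\cX_p(t)^2$ is in hand, the smallness \eqref{x-p-0} absorbs the quadratic term and yields $\cX_p(t)\lesssim \cE_{p,0}$ for all $t>0$, which is \eqref{global-Var0}.
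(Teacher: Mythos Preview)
Your overall strategy matches the paper's: low-frequency $L^p$ estimates via an effective unknown diagonalizing into a fractional heat equation plus a damped equation, high-frequency $L^2$ hypocoercivity, and a bootstrap from local existence. Your effective unknown $W=\Lambda^{-1}\div u-\Lambda^{\alpha-d+1}a$ is exactly $\Lambda^{-1}\div z$ for the paper's $z\triangleq u+\nabla\Lambda^{\alpha-d}a$, so the low-frequency schemes are equivalent; the paper simply works with the full vector $z$ and does not split off the Hodge incompressible part, since the damping already closes it.

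Two points where the paper's implementation differs from your sketch and where your version, as written, would not close:

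\textbf{High-frequency Lyapunov functional.} Your choice $\cL_j^2=\|a_j\|_{L^2}^2+\|u_j\|_{L^2}^2+2\eta\int u_j\cdot\nabla\Lambda^{\alpha-d}a_j$ is not uniformly positive definite over $j\geq J_1$: the cross term scales like $2^{j(2s_*-1)}\|a_j\|\|u_j\|$, which for $s_*>\tfrac12$ (i.e.\ $\alpha>d-1$) grows with $j$ and cannot be absorbed by a fixed small $\eta$. The paper instead symmetrizes by passing to the pair $(\Lambda^{s_*}a_j,\Lambda u_j)$, so that the basic energy is $\|\Lambda^{s_*}a_j\|_{L^2}^2+\|\Lambda u_j\|_{L^2}^2$ and the hypocoercive correction is $-2\tilde c\int a_j\,\div u_j$, which is $O(2^{-js_*})$ relative to the main energy and hence harmless at high frequencies. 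This also explains why $a$ and $u$ are summed with the \emph{same} weight $2^{j(d/2+1-s_*)}$, automatically producing the asymmetric target norms $\dot B^{d/2+1}_{2,1}\times\dot B^{d/2+2-s_*}_{2,1}$. In addition, to avoid derivative loss from the convection terms the paper adds a quasilinear correction $\int\dot S_{j-1}a\,|\Lambda u_j|^2$ to $\cL_j^2$ and tracks $\|\partial_t a\|_{\tilde L^1_t(\dot B^{d/p}_{p,1})}$ as an auxiliary dissipation term; your sketch does not account for this and a naive paraproduct bound would lose one derivative on $a$.

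\textbf{Where the restriction \eqref{p} enters.} It is used not in the low-frequency product law you indicate, but in the \emph{high-frequency} commutator estimate (the paper's Lemma~\ref{barat-commutator-estimates}, after \cite{Barat-Danchin,Barat-Danchin2023}), which requires $2\leq p\leq 4$ together with $d/p^*\leq 1$ for $p^*=2p/(p-2)$, i.e.\ $p\leq 2d/(d-2)$ when $d\geq 5$.

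Finally, the paper does not run a Friedrichs approximation; it quotes a separate local well-posedness theorem in the hybrid space and extends by the continuation argument you describe. Both routes are standard.
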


\begin{remark}
Theorem \ref{global} gives an improvement of the global regularity for the Cauchy problem of the pressureless damped Euler-Riesz system in comparison with the recent effort \cite{choi and jung}. For any $m>\frac{d}{2}+2$, we have the embeddings 
$$H^{m}
\hookrightarrow \dot{B}^{\frac{d}{p}-1,\frac{d}{2}+1}_{p,2}(2\leq p\leq d),
\quad\quad H^{m+\frac{d-\alpha}{2}}\hookrightarrow \dot{B}^{\frac{d}{p},\frac{d}{2}+2-s_*}_{p,2}(p\geq 2).
$$
Furthermore, it is shown by \eqref{global-Var0} that the low frequencies of $a$ behave like fractional heat kernel,  while $u$ and the high frequencies  exhibit damping effects. Such qualitative estimates  are sharp and coincide with the spectral behaviors of solutions  presented in Subsection \ref{subsection12}.
\end{remark}


\begin{remark}
It should be pointed out that depending on $s_*$, $a$ and $u$ exhibit different behaviors in low frequencies due to the coupling $u+\nabla\Lambda^{\alpha-d}a$ in the velocity equations. When $\frac{1}{2}<s_*<1$, i.e., $d-1<\alpha<d$, the regularity $L^1_t(\dot{B}^{d/p}_{p,1})$ of $u$ is stronger than the regularity $L^1_t(\dot{B}^{d/p-1+2s_*}_{p,1})$ of $a$, so the damping effect is dominant. On the other hand, in the case $0<s_*<\frac{1}{2}$, i.e., $d-2<\alpha<d-1$, the fractional diffusion effect caused by Riesz interactions is stronger in the low-frequency regime. Hence, $s_*=\frac{1}{2}$ is the threshold between damping and fractional diffusion effects. This phenomenon also occurs in the decay rates of $a$ and $u$ obtained in Theorem \ref{decay-a-u} below.  
\end{remark}


Next, we deduce the large-time behavior of  solutions constructed in Theorem \ref{global}.

\begin{theorem}\label{decay-a-u}
Let $d\geq1$, $d-2<\alpha<d$, $s_*\triangleq\frac{\alpha-d+2}{2}\in (0,1)$, and $p$ be given by \eqref{p}. Let $(a,u)$ be the global
solution to the Cauchy problem \eqref{E-Rli} addressed in Theorem \ref{global}. If in addition to \eqref{x-p-0}, suppose further 
$a_0^\ell\in \dot B^{\sigma_1}_{p,\infty}$ and $u_0^\ell\in
\dot B^{\sigma_1+1}_{p,\infty}$ with $-\frac dp-1\leq\sigma_1<\frac dp-1$ 
such that 
\begin{equation}\label{Xp0}
\cX_{p,0}\triangleq\|a_0^\ell\|
_{\dot B^{\sigma_1}_{p,\infty}}
+\|u_0^\ell\|
_{\dot B^{\sigma_1+1}_{p,\infty}}
+\|a_0\|_{\dot{B}^{\frac{d}{2}+1}_{2,1}}^{h}
+\|u_0\|_{\dot B^{\frac{d}{2}+2-s_*}_{2,1}}^{h}
\end{equation}
is bounded, then it holds for all $t\geq0$ that
\begin{equation}\label{decayau}
\begin{aligned}
\|a\|_{\dot{B}_{p,1}^\sigma}&
\lesssim 
\begin{cases}
 \cX_{p,0}(1+t)^{-\frac{1}{2s_*}(\sigma-\sigma_1) },
& \mbox{\quad if \quad $\sigma_1<\sigma\leq\frac d{p}-1$},\\
 \cX_{p,0}(1+t)^{-\frac{1}{2s_*}(\frac{d}{p}-1-\sigma_1) },
& \mbox{\quad if \quad $\frac d{p}-1<\sigma\leq \frac{d}{p}+1$},
\end{cases}
\end{aligned}
\end{equation}
and
\begin{equation}\label{decayau1}
\begin{aligned}
\|u\|_{\dot{B}_{p,1}^{\sigma}}&\lesssim  
\begin{cases}
 \cX_{p,0}(1+t)^{-\frac{1}{2s_*}(\sigma-\sigma_1+2s_*-1) },
& \mbox{\quad if \quad $\sigma_1+1<\sigma\leq\frac d{p}-2s_*$},\\
 \cX_{p,0}(1+t)^{-\frac{1}{2s_*}(\frac{d}{p}-1-\sigma_1) },
& \mbox{\quad if \quad $\frac{d}{p}-2s_*<\sigma\leq \frac{d}{p}+2-s_*$},
\end{cases}
\end{aligned}
\end{equation}
and the high frequency norms satisfy
\begin{equation}\label{au-high-decay}
\|a\|_{\dot{B}_{2,1}^{\frac d2+1}}^h
+\|u\|_{\dot{B}_{2,1}^{\frac d2+2-s_*}}^h
\lesssim \cX_{p,0}
(1+t)^{-\frac{1}{s_*}(\frac dp-1-\sigma_1)}.
\end{equation}
\end{theorem}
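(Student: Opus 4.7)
The proof proceeds via hypercoercivity with time-weighted energy functionals, adapting the approach of \cite{Barat-Danchin,Barat-Danchin2023} to the fractional-diffusion setting of \eqref{E-Rli}. The spectral analysis of Subsection \ref{subsection12} identifies $a^\ell$ as fractional-heat-type with symbol $|\xi|^{2s_*}$; through the coupling $u+\nabla\Lambda^{\alpha-d}a$, the slow mode for $u$ carries an additional factor $|\xi|^{2s_*-1}$, so the effective low-frequency initial regularity of $u$ is $\sigma_1+1-2s_*$. This accounts for the shifted exponent $(\sigma-\sigma_1+2s_*-1)/(2s_*)$ appearing in \eqref{decayau1}. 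The high-frequency rate $(d/p-1-\sigma_1)/s_*$ is twice the slowest low-frequency rate and reflects that the nonlinear forcing of the high-frequency Lyapunov functional behaves quadratically in the decaying low-frequency quantities.

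The concrete plan is to close a bootstrap on a decay functional of the form
\begin{align*}
\mathcal D_p(t)
&\triangleq \sup_{\sigma_1<\sigma\leq d/p+1}\sup_{\tau\leq t}(1+\tau)^{\alpha_a(\sigma)}\|a(\tau)\|_{\dot B^{\sigma}_{p,1}} \\
&\quad +\sup_{\sigma_1+1<\sigma\leq d/p+2-s_*}\sup_{\tau\leq t}(1+\tau)^{\alpha_u(\sigma)}\|u(\tau)\|_{\dot B^{\sigma}_{p,1}} \\
&\quad +\sup_{\tau\leq t}(1+\tau)^{(d/p-1-\sigma_1)/s_*}\bigl(\|a(\tau)\|_{\dot B^{d/2+1}_{2,1}}^{h}+\|u(\tau)\|_{\dot B^{d/2+2-s_*}_{2,1}}^{h}\bigr),
\end{align*}
where $\alpha_a,\alpha_u$ are the exponents stated in \eqref{decayau}--\eqref{decayau1}. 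As a preparatory step I would propagate the negative Besov regularity
\[
\sup_{\tau\leq t}\bigl(\|a^\ell(\tau)\|_{\dot B^{\sigma_1}_{p,\infty}}+\|u^\ell(\tau)\|_{\dot B^{\sigma_1+1}_{p,\infty}}\bigr)\lesssim \mathcal X_{p,0}+\mathcal D_p(t)^2,
\]
by applying $\dot\Delta_j$ to \eqref{E-Rli}, treating the linear part through the semigroup $e^{tA(\xi)}$, and estimating $\div(au)$ and $u\cdot\nabla u$ via Besov product laws together with the bounds of Theorem \ref{global}; the lower bound $\sigma_1\geq -d/p-1$ is exactly what these product laws require. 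The low-frequency part of $\mathcal D_p(t)$ is then obtained from Duhamel for $e^{tA(\xi)}$ together with the pointwise kernel bound $|e^{tA(\xi)}|\lesssim e^{-c|\xi|^{2s_*}t}$ valid on $|\xi|\lesssim 1$ and the interpolation \eqref{low-1-infty}. The saturation for $\sigma$ above $d/p-1$ (respectively $d/p-2s_*$) comes from bounding the high-frequency contribution by Bernstein's inequality against the hybrid high-frequency norm. Finally, the high-frequency part of $\mathcal D_p(t)$ is driven by a time-weighted version of the Lyapunov inequality underlying Theorem \ref{global}: multiplying by $(1+\tau)^{(d/p-1-\sigma_1)/s_*}$ and integrating, the exponential damping absorbs the growing weight and the nonlinear remainders are controlled by $\mathcal D_p(t)^2$ through product laws and \eqref{global-Var0}.

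The principal obstacle is the treatment of the convective term $u\cdot\nabla u$ and of the Riesz term $(1+a)\nabla\Lambda^{\alpha-d}a$ at low frequencies: because the dissipation is only of fractional order $2s_*<2$, one cannot absorb a full derivative on the nonlinearity without losing the sought rate. The cure is a Bony paraproduct decomposition into low-low, low-high and high-high interactions, exploiting the asymmetric regularities of $a$ and $u$ imposed by the coupling $u+\nabla\Lambda^{\alpha-d}a$ and the $L^{1}_{t}$-integrability of the high-frequency norms from \eqref{global-Var0} to absorb transport-type remainders. This yields the quadratic control required to produce $\mathcal D_p(t)\lesssim \mathcal X_{p,0}+\mathcal D_p(t)^2$; smallness of $\mathcal X_{p,0}$ given by \eqref{x-p-0} then closes the bootstrap and delivers \eqref{decayau}--\eqref{au-high-decay}.
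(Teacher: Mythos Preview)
Your overall route is genuinely different from the paper's. You propose a \emph{time-weighted Duhamel/bootstrap} scheme: define a supremum-in-time functional $\mathcal D_p(t)$ carrying the target weights, use the semigroup bound $|e^{tA(\xi)}|\lesssim e^{-c|\xi|^{2s_*}t}$ at low frequencies, and close a quadratic inequality $\mathcal D_p(t)\lesssim \mathcal X_{p,0}+\mathcal D_p(t)^2$. The paper instead follows the Guo--Wang/Xin--Xu \emph{Lyapunov ODE} method: it first proves, as a standalone fact, that $\|a^\ell\|_{\dot B^{\sigma_1}_{p,\infty}}+\|u^\ell\|_{\dot B^{\sigma_1+1}_{p,\infty}}\lesssim \mathcal X_{p,0}$ uniformly in time (Proposition~\ref{negative besov bounded}), derives the pointwise energy--dissipation inequality $\tfrac{d}{dt}E+D\le 0$ from the a~priori analysis, and then interpolates $E$ between the propagated $\dot B^{\sigma_1}_{p,\infty}$ norm and $D$ to obtain $\tfrac{d}{dt}E+c\,E^{1+2s_*/(d/p-1-\sigma_1)}\le 0$; solving this ODE gives the basic rate, and interpolation plus a direct Duhamel on the $u$-equation yields the refined rates and the faster high-frequency decay. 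The Lyapunov route avoids explicit matrix-semigroup kernel computations and, crucially, never needs $\mathcal X_{p,0}$ to be small.

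That last point is where your proposal has a gap. You write that ``smallness of $\mathcal X_{p,0}$ given by \eqref{x-p-0} then closes the bootstrap,'' but \eqref{x-p-0} only gives smallness of $\cE_{p,0}$; Theorem~\ref{decay-a-u} assumes $\mathcal X_{p,0}$ is merely \emph{bounded}. Consequently the inequality $\mathcal D_p(t)\lesssim \mathcal X_{p,0}+\mathcal D_p(t)^2$ cannot be closed as stated. The fix is to structure every nonlinear product so that one factor is controlled by the small global quantity $\cE_{p,0}$ from \eqref{global-Var0} and the other by $\mathcal D_p(t)$, yielding $\mathcal D_p(t)\lesssim \mathcal X_{p,0}+\cE_{p,0}\,\mathcal D_p(t)$, which does close by smallness of $\cE_{p,0}$. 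The same remark applies to your propagation step for the negative Besov norm: the paper obtains it outright as $\lesssim \mathcal X_{p,0}$ (using $\cE_{p,0}$-smallness to absorb the nonlinear terms linearly), not as part of a quadratic bootstrap. With that correction your approach should go through, but you should be explicit that no additional smallness of $\mathcal X_{p,0}$ is being invoked.
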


\begin{remark} The inequalities 
\eqref{decayau}-\eqref{decayau1} exhibit the optimal time-decay rates  for the fractional heat equation {\rm(}\cite{BS,brandolese1}{\rm)}, which are slower than that of partially dissipative hyperbolic systems with the standard dissipative structure {\rm(}\cite{xu2,xu3}{\rm)}. Indeed, according to the expansions of the eigenvalues for the linear system {\rm(}see Subsection \ref{subsection12}{\rm)}, the solution behaves like the fractional heat kernel at low frequencies while the high-frequency part decays at a faster rate. On the other hand, since the relaxation approximation is given by the fractional porous media model \eqref{fpm}, one can expect that the solution is asymptotically equivalent to that of \eqref{fpm}.  
\end{remark}

\begin{remark}
The regularity condition $\dot{B}^{\sigma_1}_{p,\infty}$ is sharp for the optimal decay estimates of dissipative systems {\rm(}\cite{brandolese1}{\rm)}. The  space $\dot{B}^{\sigma_1}_{p,\infty}$ is less restrictive than the standard assumption with respect to $L^1$ or $\dot{H}^{-s}$, if we keep in mind observing  the following embeddings
$$
L^{\frac{p}{2}}\hookrightarrow \dot{B}^{-\frac{d}{p}}_{p,\infty}~(p\geq2),\quad\quad \dot{H}^{-s}\hookrightarrow \dot{B}^{-s}_{2,\infty}.
$$
\end{remark}

As a direct consequence of Theorems \ref{global}-\ref{decay-a-u}, one can also get the global existence and optimal decay estimates of solutions in the classical $L^2$-type critical spaces.

\begin{cor}
Assume $d\geq1$, $d-2<\alpha<d$ and $s_*\triangleq\frac{\alpha-d+2}{2}\in (0,1)$.  There is a constant $\delta^*_0>0$ depending only  on $d$ and $ \alpha$ such that if the initial datum $(a_0,u_0)$ fulfills
$a_0\in\dot B^{\frac d2-1}_{2,1}\cap \dot B^{\frac{d}{2}+1}_{2,1}$, $u_0\in
\dot B^{\frac d2}_{2,1}\cap \dot{B}^{\frac{d}{2}+2-s_*}_{2,1}$ and
\begin{equation}\nonumber
\|a_0\|_{\dot B^{\frac d2-1}_{2,1}\cap \dot B^{\frac{d}{2}+1}_{2,1}}+\|u_0\|_{\dot B^{\frac d2}_{2,1}\cap \dot{B}^{\frac{d}{2}+2-s_*}_{2,1}}\leq \delta^*_0,
\end{equation}
then the Cauchy problem \eqref{E-Rli} admits the unique  global solution $(a,u)$   satisfying
\begin{equation}\nonumber
\left\{
\begin{aligned}
&a^{\ell}\in \tilde{L}^\infty(\mathbb{R}_{+};\dot B^{\frac d2-1}_{2,1})
\cap L^1(\mathbb{R}_{+};\dot B^{\frac d2-1+2s_*}_{2,1}),\\
&a^{h}\in \tilde L^\infty(\mathbb{R}_{+};\dot B^{\frac d2+1}_{2,1})
\cap L^1(\mathbb{R}_{+};\dot B^{\frac d2+1}_{2,1}),\\
&u^{\ell}\in \tilde L^\infty(\mathbb{R}_{+};\dot B^\frac d2_{2,1})
\cap L^1(\mathbb{R}_{+};\dot B^{\frac d2}_{2,1}),\\
&u^{h}\in \tilde L^\infty(\mathbb{R}_{+};\dot B^{\frac{d}{2}+2-s_*}_{2,1})
\cap L^1(\mathbb{R}_{+};\dot B^{\frac{d}{2}+2-s_*}_{2,1}).
\end{aligned}
\right.
\end{equation}	

Additionally, if $a_0^\ell\in \dot B^{\sigma_1}_{2,\infty}$ and $u_0^\ell\in
\dot B^{\sigma_1+1}_{2,\infty}$ with $-\frac d2-1\leq\sigma_1<\frac d2-1$  then for all $t\geq 0$, we have
\begin{equation}\nonumber
\begin{aligned}
&\|\Lambda^{\sigma} a\|_{L^2}\lesssim 
\begin{cases} 
 (1+t)^{-\frac{1}{2s_*}(\sigma-\sigma_1)},
 & \mbox{\quad if \quad $ \sigma_1<\sigma\leq \frac{d}{2}-1$},\\
 (1+t)^{-\frac{1}{2s_*}(\frac{d}{2}-1-\sigma_1)},
 & \mbox{\quad if \quad $ \frac{d}{2}-1<\sigma\leq \frac{d}{2}+1$},
\end{cases}
\end{aligned}
\end{equation}
and
\begin{equation}\nonumber
\begin{aligned}
&\|\Lambda^{\sigma} u\|_{L^2}\lesssim
\begin{cases} 
(1+t)^{-\frac{1}{2s_*}(\sigma-\sigma_1+2s_*-1)},
& \mbox{\quad if \quad $ \sigma_1+1<\sigma\leq \frac{d}{2}-2s_*$},\\
(1+t)^{-\frac{1}{2s_*}(\frac{d}{2}-1-\sigma_1)},& \mbox{\quad if \quad $ \frac{d}{2}-2s_*<\sigma\leq \frac{d}{2}+2-s_*$}.
\end{cases}
\end{aligned}
\end{equation}
\end{cor}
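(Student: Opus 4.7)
The corollary is a direct specialization of Theorems \ref{global} and \ref{decay-a-u} to $p=2$. My plan is to verify that the smallness condition and the decay bounds in the simplified $L^2$-based formulation both reduce to those already proved, after reconciling the hybrid Besov norms with sums of classical Besov norms and translating $\dot B^\sigma_{2,1}$ bounds into $L^2$ bounds on $\Lambda^\sigma$.

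\emph{Step 1 (Global existence).} The choice $p=2$ is admissible in \eqref{p} for every $d\geq 1$. With $p=2$ the low-frequency hybrid indices $\frac dp-1$ and $\frac dp$ become $\frac d2-1$ and $\frac d2$, so the hybrid norm $\|\cdot\|_{\dot B^{d/2-1}_{2,1}}^\ell+\|\cdot\|_{\dot B^{d/2+1}_{2,1}}^h$ that defines the space $\dot B^{d/p-1,d/2+1}_{p,2}$ at $p=2$ is comparable, up to the cutoff $J_1$, to the full norm $\|\cdot\|_{\dot B^{d/2-1}_{2,1}\cap \dot B^{d/2+1}_{2,1}}$. Indeed, the low-frequency sum $(j\leq J_1)$ and the high-frequency sum $(j\geq J_1-1)$ share only finitely many dyadic blocks $\{\dot\Delta_j\}_{j\sim J_1}$ on which the exponents $\frac d2-1$ and $\frac d2+1$ differ by a bounded multiplicative constant. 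The analogous identification applies to the norm controlling $u_0$. Hence there exists an absolute constant $C\geq 1$ with
\[
\cE_{2,0}\leq C\Bigl(\|a_0\|_{\dot B^{d/2-1}_{2,1}\cap \dot B^{d/2+1}_{2,1}}+\|u_0\|_{\dot B^{d/2}_{2,1}\cap \dot B^{d/2+2-s_*}_{2,1}}\Bigr).
\]
Setting $\delta_0^*\triangleq \delta_0/C$ makes the hypothesis of Theorem \ref{global} satisfied, and the resulting regularity statement follows by invoking the routine inequalities $\|a^\ell\|_{\dot B^s_{2,1}}\leq \|a\|_{\dot B^s_{2,1}}^\ell$, $\|a^h\|_{\dot B^s_{2,1}}\leq \|a\|_{\dot B^s_{2,1}}^h$ (and the analogous bounds for $u$) to disentangle the four memberships claimed in the corollary.

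\emph{Step 2 (Decay estimates).} With the global solution in hand, Theorem \ref{decay-a-u} applied at $p=2$ yields the decay of $\|a\|_{\dot B^\sigma_{2,1}}$ and $\|u\|_{\dot B^\sigma_{2,1}}$ in the stated ranges, together with the faster high-frequency decay \eqref{au-high-decay}. The continuous embedding
\[
\dot B^\sigma_{2,1}\hookrightarrow \dot B^\sigma_{2,2}=\dot H^\sigma
\]
gives $\|\Lambda^\sigma f\|_{L^2}=\|f\|_{\dot H^\sigma}\lesssim \|f\|_{\dot B^\sigma_{2,1}}$, so each Besov decay transfers to a decay on $\|\Lambda^\sigma a\|_{L^2}$ or $\|\Lambda^\sigma u\|_{L^2}$. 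It then only remains to observe that the high-frequency contribution, which decays at rate $(1+t)^{-\frac{1}{s_*}(\frac d2-1-\sigma_1)}$ by \eqref{au-high-decay}, is strictly dominated by the low-frequency rate $(1+t)^{-\frac{1}{2s_*}(\frac d2-1-\sigma_1)}$; since the former exponent is twice the latter and $\frac d2-1-\sigma_1>0$, the high-frequency part is negligible, and the overall rate is set by the low-frequency contribution exactly as stated.

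\emph{Main obstacle.} No new analytic estimate is required; the argument is a translation exercise. The only mildly subtle point is the norm equivalence underlying Step 1, namely that the hybrid space $\dot B^{d/p-1,d/2+1}_{p,2}$ collapses to the plain intersection $\dot B^{d/2-1}_{2,1}\cap \dot B^{d/2+1}_{2,1}$ when $p=2$, which must be checked at the cutoff scale $2^{J_1}$. Once this is granted, invoking Theorems \ref{global}--\ref{decay-a-u} and applying the embedding $\dot B^\sigma_{2,1}\hookrightarrow \dot H^\sigma$ deliver the two parts of the corollary.
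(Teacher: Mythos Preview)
Your proposal is correct and matches the paper's approach: the corollary is stated there as ``a direct consequence of Theorems \ref{global}--\ref{decay-a-u},'' and your two steps---specializing to $p=2$, identifying the hybrid norm with the intersection norm, and then invoking $\dot B^\sigma_{2,1}\hookrightarrow \dot H^\sigma$---are exactly the intended reduction.
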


\vspace{1mm}

\subsection{Strategies} 

We explain the main strategies to prove Theorems \ref{global}-\ref{decay-a-u} concerning the study of the Euler-Riesz system with critical regularity. 
Different from the previous efforts \cite{Barat-Danchin,Barat-Danchin2022,Barat-Danchin2023} on the first-order hyperbolic system, the hyperbolic part of \eqref{E-R-lin} only provides the dissipation of $u$. To overcome the difficulty, we need to capture the intrinsic dissipation of $a$ by the coupling of the Riesz term and the damping part, which is weaker than the standard dissipation for first-order hyperbolic systems. The new structure in the Euler-Riesz system relies on the elaborate low-frequency and high-frequency analysis via the Littlewood-Paley decomposition.

In the low-frequency regime, since the usual symmetrization cannot be applied to the $L^p$ setting, we perform an elaborate energy argument. Precisely, we decompose the system \eqref{E-R-lin} into a fractional diffusion equation and a damping equation due to the introduction the {\emph{effective unknown}}
$$z\triangleq u
+\nabla\Lambda^{\alpha-d}a,$$
which is inspired by \cite{Barat-Danchin2022,Barat-Danchin2023,CS} in the study of one-order partially dissipative hyperbolic systems. Here we replace the usual pressure term by the non-local one $\nabla\Lambda^{\alpha-d}a$. In fact, the effective unknown $z$ allows us to rewrite the equation $\eqref{E-Rli}_1$ as
\begin{equation}
\label{low-a0}
\begin{array}{l}
\partial_ta
+\Lambda^{2s_*}a
=-\div z -\div(a u).
\end{array}
\end{equation}
This exhibits the fractional diffusion term $\Lambda^{2s_*}a$ with $0<s_*<1$. In order to analyze the linear term $-\div z$ on the right-hand side of \eqref{low-a0}, we observe that $z$ satisfies the damped equation
\begin{align}\label{low-z0}
\partial_t z+ z=-\nabla\Lambda^{2s_*-2}\div z
-\nabla\Lambda^{4s_*-2}a-\nabla\Lambda^{2s_*-2}\div(au)
-u\cdot\nabla u.
\end{align}
This gives a damping property of $z$ in low frequencies. To decouple $a$ and $z$, one can expect that the higher order terms on the right-hand sides of \eqref{low-a0} and \eqref{low-z0} can be absorbed if the threshold $J_{1}$ between low and high frequencies is chosen to be suitable small. Due to the fact that the dissipation of \eqref{low-a0} is weaker than the standard heat equation, we perform the $\dot{B}^{d/p-1}_{p,1}$-estimate of $a$ and the $\dot{B}^{d/p}_{p,1}$-estimate of $u$, which are different from the damped compressible Euler equations (\cite{Barat-Danchin2022,Barat-Danchin2023}) where both $a$ and $u$ are analyzed at the $\dot{B}^{d/p}_{p,1}$ level. Thus, we are able to establish the maximal regularity estimates for $a$ and $z$, and then recover the desired estimates of $u$ (refer to Lemma \ref{lemma-low-global}).

In the high-frequency regime, we employ a hypocoercivity-type argument in the sense of localized frequencies. 
The major difficulty lies in the fact that the entropy in the study of first-order hyperbolic systems (\cite{kawashima-yong-2004,kawashima-yong-2009,yong}) may not be applied to symmetrize the system \eqref{E-R-lin-hodge} due to the fractional operator in the Riesz force. 
To overcome it, we observe that the linear coupled system \eqref{E-R-lin-hodge} in  terms of the unknowns $(U_1,U_2)=(\Lambda^{s_*}a,\Lambda m)$ can be rewritten as a symmetric system with relaxation:
\begin{equation}\nonumber
	\left\{\begin{array}{l}
		\partial_tU_1+\Lambda^{s_*} U_2=0,\\
		\partial_t U_2-\Lambda^{s_*}U_1+ U_2=0.
	\end{array}\right.
\end{equation}
This enables us to adapt the theory of symmetric hyperbolic systems to cancel the higher-order linear terms and capture the dissipation of $a$ in the spirit of hypocoercivity. For the nonlinear system \eqref{E-Rli}, one needs to overcome the higher order nonlinear terms $\div(au)$ and $u\cdot\nabla u$. By rewriting the system with some commutators for spectral localization and fractional Laplacian, we construct a delicate Lyapunov functional inequality to establish the desired estimates in the $L^2$ framework (cf. Lemma \ref{lemma-high-global}).

Finally, we generalize our recent Lyapunov energy argument from the classical heat-like dissipation to the fractional dissipation in the $L^p$ framework so as to deduce the optimal decay estimates of solutions for \eqref{E-Rli}. The crucial part of the proof of Theorem \ref{decay-a-u} is the evolution of the $\dot B_{p,\infty}^{\sigma_1}$-norm for low frequencies (see Proposition \ref{negative besov bounded} for details).

\vspace{4mm}

The rest of the paper unfolds as follows: 
In Section \ref{global-proof}, we establish the uniform a-priori estimate and give the proof of Theorem \ref{global}. In Section \ref{decay-proof}, we focus on large-time behaviors of solutions addressed by Theorem \ref{decay-a-u}.
 Some technical lemmas are recalled in Appendix.
 


\section{Proof of Theorem \ref{global}}\label{global-proof}
\subsection{A priori estimates}

Below, we give the key a-priori estimates of solutions, which leads to the global existence for $\eqref{E-Rli}$.
\begin{proposition}\label{priori-global}
Let $d\geq 1$, $p$ satisfy \eqref{p} and 
$s_*\triangleq\frac{\alpha-d+2}{2}\in (0,1)$.
Any given time $T>0,$ suppose that $(a,u)(t)$ with $0\leq t<T$ is a strong solution to the Cauchy problem \eqref{E-Rli}.
There exists a universal constant $\var_0>0$ such that 
\begin{equation}\label{a-priori-supose}
\|a\|_{L^\infty_t(L^\infty)}\leq \var_0,
\end{equation}
then it holds that
\begin{equation}\label{priori-estimate}
\cE_p(t)+\cD_p(t)\leq C_0\big(\cE_{p,0}+\cE_p(t)\cD_p(t)\big),
\end{equation}
where $C_0>0$ is a constant independent of $T$ and the functionals $\cE_p(t)$ and
$\cD_{p}(t)$ are, respectively, defined as 
\begin{equation*}
\begin{aligned}
\cE_p(t)&\triangleq
\|a\|_{\tilde L_t^\infty(\dot B^{\frac dp-1}_{p,1})}^{\ell}
+\|u\|_{\tilde L_t^\infty(\dot B^\frac dp_{p,1})}^{\ell}
+\|a\|_{\tilde L_t^\infty(\dot B^{\frac d2+1}_{2,1})}^h
+\|u\|_{\tilde L_t^\infty(\dot B^{\frac d2+2-s_*}_{2,1})}^h,
\end{aligned}
\end{equation*}	
and
\begin{equation}\label{cDpt}
\begin{aligned}
\cD_{p}(t)&\triangleq\|a\|_{\tilde L_t^1
	(\dot B^{\frac dp-1+2s_*}_{p,1})}^{\ell}
+\|u\|_{\tilde L_t^1
	(\dot B^{\frac dp}_{p,1})}^{\ell}
+\|a\|_{\tilde L_t^1
	(\dot B^{\frac d2+1}_{2,1})}^{h}
\\&\quad+
\|u\|_{\tilde L_t^1
	(\dot B^{\frac d2+2-s_*}_{2,1})}^{h}
+\|\partial_ta\|_{\tilde L_t^1(\dot B^{\frac dp}_{p,1})}.
\end{aligned}
\end{equation}
\end{proposition}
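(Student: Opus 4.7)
The plan is to split the analysis into low and high frequencies, separated by the threshold $J_1$, using techniques adapted to the spectral behavior described in Subsection \ref{subsection12}, and to handle the nonlinear contributions via standard Besov product and commutator estimates. For the low-frequency regime, I would follow the effective-unknown approach sketched by the authors: introducing $z\triangleq u+\nabla\Lambda^{\alpha-d}a$ recasts \eqref{E-Rli} as a fractional heat equation for $a$,
\begin{equation*}
\partial_t a+\Lambda^{2s_*}a=-\div z-\div(au),
\end{equation*}
coupled with a damped equation for $z$ whose forcing contains $-\nabla\Lambda^{2s_*-2}\div z$, $-\nabla\Lambda^{4s_*-2}a$, and quadratic terms. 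Applying $\dot\Delta_j$ for $j\leq J_1$ and running the standard $L^p$ energy argument (multiplying by $|\dot\Delta_j a|^{p-2}\dot\Delta_j a$, integrating by parts, and using the Bernstein lower bound $\|\Lambda^{2s_*}\dot\Delta_j f\|_{L^p}\gtrsim 2^{2js_*}\|\dot\Delta_j f\|_{L^p}$ on the annulus $\{|\xi|\sim 2^j\}$) yields a gain of $2^{2js_*}$ per unit time for $\dot\Delta_j a$ and full damping for $\dot\Delta_j z$. The linear couplings are subleading at low frequency once $J_1$ is chosen sufficiently negative, since $4s_*-2<2s_*$. Reconstructing $u=z-\nabla\Lambda^{\alpha-d}a$ then yields the bound on $u^\ell$ in $\dot B^{d/p}_{p,1}$.

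For the high-frequency regime, I would work in the $L^2$ framework, as motivated by the symmetrization via $(U_1,U_2)=(\Lambda^{s_*}a,\Lambda m)$ with $m=\Lambda^{-1}\div u$, which makes the linear part symmetric hyperbolic with partial damping on $U_2$. Localizing via $\dot\Delta_j$ for $j\geq J_1-1$, the natural energy $\|\dot\Delta_j a\|_{L^2}^2+\|\dot\Delta_j u\|_{L^2}^2$ provides dissipation only of $u$. To extract dissipation of $a$, I would augment it by a small cross term
\begin{equation*}
\eta_j\int_{\R^d}\nabla\Lambda^{\alpha-d}\dot\Delta_j a\cdot\dot\Delta_j u\,dx,
\end{equation*}
whose time derivative contains the coercive contribution $-\|\nabla\Lambda^{\alpha-d}\dot\Delta_j a\|_{L^2}^2\sim -2^{2j(2s_*-1)}\|\dot\Delta_j a\|_{L^2}^2$ together with controlled error terms. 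Tuning $\eta_j$ to be a small negative power of $2^j$ makes the augmented functional equivalent to the pure energy on each shell $j\geq J_1-1$ and decaying at the expected exponential rate. Summing the resulting $2^{j(d/2+1)}$-weighted estimates for $a$ and $2^{j(d/2+2-s_*)}$-weighted estimates for $u$ over $j\geq J_1-1$ then recovers the high-frequency parts of $\cE_p(t)$ and $\cD_p(t)$, the shift of $1-s_*$ between the two regularity levels reflecting the order of the Riesz coupling.

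The nonlinear terms $\div(au)$ and $u\cdot\nabla u$ are handled by Bony paraproduct and the usual product laws in Besov spaces, after commuting $\dot\Delta_j$ through the convective operators; the resulting Chemin--Lerner commutator remainders are absorbed into the dissipation budget under the restriction on $p$ in \eqref{p}, which secures both the embedding $\dot B^{d/p}_{p,1}\hookrightarrow L^\infty$ and the compatibility of the low/high frequency splitting in product estimates. The bound on $\partial_t a$ in $\tilde L^1_t(\dot B^{d/p}_{p,1})$ is then read off directly from the continuity equation in \eqref{E-Rli}. The hardest step will be the high-frequency hypocoercivity construction: choosing the weight $\eta_j$ so that the cross term both dominates the mixed negative contributions and respects the nonlocal nature of $\nabla\Lambda^{\alpha-d}$ requires careful bookkeeping of powers of $2^j$ on each dyadic shell, and the commutator $[\dot\Delta_j,u\cdot\nabla]$ arising from the frequency localization must be controlled via a version of the standard Besov commutator lemma adapted to this geometry.
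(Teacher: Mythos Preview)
Your low-frequency plan is essentially the paper's own argument: the effective unknown $z=u+\nabla\Lambda^{\alpha-d}a$, the fractional heat estimate for $a$ in $\dot B^{d/p-1}_{p,1}$, the damped estimate for $z$ in $\dot B^{d/p}_{p,1}$, and the absorption of the linear remainders by taking $J_1$ small. That part is fine.

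The high-frequency part, however, has two concrete problems. First, your proposed cross term $\eta_j\int\nabla\Lambda^{\alpha-d}a_j\cdot u_j$ produces the coercive contribution $\|\Lambda^{2s_*-1}a_j\|_{L^2}^2\sim 2^{2j(2s_*-1)}\|a_j\|_{L^2}^2$, not the required $\|\Lambda^{s_*}a_j\|_{L^2}^2\sim 2^{2js_*}\|a_j\|_{L^2}^2$. Matching the two forces $\eta_j\sim 2^{2j(1-s_*)}$, a \emph{positive} power of $2^j$ (since $s_*<1$), not a negative one; after that rescaling your cross term collapses to the paper's simpler choice $-\tilde c\int a_j\,\div u_j$, with $\tilde c$ a fixed small constant. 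The paper works directly at the level of the symmetrized energy $\|\Lambda^{s_*}a_j\|_{L^2}^2+\|\Lambda u_j\|_{L^2}^2$ (so that the linear Riesz coupling cancels exactly) and adds this constant-weight cross term; no $j$-dependent tuning is needed.

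Second, and more seriously, your plan to close the nonlinear terms ``by Bony paraproduct and the usual product laws'' will fail for $a\,\div u$ at high frequency. After applying $\Lambda^{s_*}$ to the $a$-equation and testing against $\Lambda^{s_*}a_j$, the term $\int\dot S_{j-1}a\,\div u_j\,\Lambda^{2s_*}a_j$ carries an extra factor $2^{js_*}$ relative to the energy and cannot be absorbed by smallness of $\|a\|_{L^\infty}$ alone: direct product estimates would require $u\in\dot B^{d/2+2}_{2,1}$, while you only control $u\in\dot B^{d/2+2-s_*}_{2,1}$. The paper resolves this by adding a further \emph{nonlinear} correction $\int\dot S_{j-1}a\,(\Lambda u_j)^2$ to the Lyapunov functional; differentiating it in time produces, via the Riesz term $\nabla\Lambda^{2s_*-1}a_j$ in the $u$-equation, precisely $\int\dot S_{j-1}a\,u_j\cdot\nabla\Lambda^{2s_*}a_j$, which cancels the bad contribution (up to a fractional commutator $[\Lambda,\dot S_{j-1}a\nabla]\Lambda^{2s_*-1}a_j$ that is harmless). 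This structural cancellation is the heart of the high-frequency estimate and is missing from your outline.
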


The proof of Proposition \ref{priori-global} is divided into the following two steps.
\subsubsection{Low-frequency analysis}

First, we establish e uniform estimates for low frequencies in the $L^{p}$ framework.
\begin{Lemma}\label{lemma-low-global}
Under the assumptions of Proposition \ref{priori-global}, it holds that
\begin{equation}\label{low-global}
\begin{aligned}
&\|a\|_{\tilde L_t^\infty(\dot B^{\frac dp-1}_{p,1})}^{\ell}
+\|u\|_{\tilde L_t^\infty(\dot B^\frac dp_{p,1})}^{\ell}
+\|a\|_{\tilde L_t^1
(\dot B^{\frac dp-1+2s_*}_{p,1})}^{\ell}
+\|u\|_{\tilde L_t^1
(\dot B^{\frac dp}_{p,1})}^{\ell}
+\|\partial_ta\|_{\tilde L_t^1(\dot B^{\frac dp}_{p,1})}^\ell
\\&\quad\lesssim\cE_{p,0}+\cE_p(t)\cD_p(t).
\end{aligned}
\end{equation}
\end{Lemma}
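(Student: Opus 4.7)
My plan is to exploit the \emph{effective unknown} introduced in the Strategies subsection,
$$
z \triangleq u+\nabla\Lambda^{\alpha-d}a.
$$
A direct computation based on \eqref{E-Rli} together with the identity $\div u=\div z+\Lambda^{2s_*}a$ shows that $a$ satisfies the fractional heat equation \eqref{low-a0}, while differentiating $z$ in time and substituting $\eqref{E-Rli}_1$--$\eqref{E-Rli}_2$ gives the damped equation \eqref{low-z0}. Hence the linear low-frequency dynamics decouples into a pure fractional heat block for $a$ and a pure damping block for $z$, perturbed only by lower/same-order linear corrections and the nonlinear source terms.

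I would then localize both equations by $\dot\Delta_j$ with $j\leq J_{1}$, multiply by $|\dot\Delta_j a|^{p-2}\dot\Delta_j a$ (respectively $|\dot\Delta_j z|^{p-2}\dot\Delta_j z$), and integrate in $x$. For the $a$-equation the spectral Bernstein-type lower bound $\langle \Lambda^{2s_*}\dot\Delta_j a,|\dot\Delta_j a|^{p-2}\dot\Delta_j a\rangle\gtrsim 2^{2s_*j}\|\dot\Delta_j a\|_{L^p}^p$ (valid for $p\geq 2$) yields the maximal-regularity estimate
$$
\|\dot\Delta_j a\|_{L^\infty_t(L^p)}+2^{2s_*j}\|\dot\Delta_j a\|_{L^1_t(L^p)}
\lesssim
\|\dot\Delta_j a_0\|_{L^p}+\|\dot\Delta_j(\div z+\div(au))\|_{L^1_t(L^p)},
$$
while a direct Duhamel-type integration of the damped equation for $z$ gives, with $R$ denoting the right-hand side of \eqref{low-z0},
$$
\|\dot\Delta_j z\|_{L^\infty_t(L^p)}+\|\dot\Delta_j z\|_{L^1_t(L^p)}
\lesssim \|\dot\Delta_j z_0\|_{L^p}+\|\dot\Delta_j R\|_{L^1_t(L^p)}.
$$
Multiplying by $2^{j(d/p-1)}$ (resp.\ $2^{jd/p}$) and summing over $j\leq J_{1}$ produces a master bound for $(a,z)$ in the target low-frequency spaces.

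The next step is to absorb the linear coupling contributions. Since $\nabla\Lambda^{2s_*-2}\div$ is a Fourier multiplier of order $2s_*$ and $\nabla\Lambda^{4s_*-2}$ of order $4s_*-1$, inequality \eqref{HLEst} gives
$$
\|\nabla\Lambda^{2s_*-2}\div z\|^\ell_{\dot B^{d/p}_{p,1}}
\lesssim 2^{2s_*J_{1}}\|z\|^\ell_{\dot B^{d/p}_{p,1}},\qquad
\|\nabla\Lambda^{4s_*-2}a\|^\ell_{\dot B^{d/p}_{p,1}}
\lesssim 2^{2s_*J_{1}}\|a\|^\ell_{\dot B^{d/p-1+2s_*}_{p,1}},
$$
together with the analogous bound $\|\div z\|^\ell_{\dot B^{d/p-1+2s_*}_{p,1}}\lesssim 2^{2s_*J_{1}}\|z\|^\ell_{\dot B^{d/p}_{p,1}}$. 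Choosing $J_{1}=J_{1}(d,p,\alpha)$ sufficiently negative, all these contributions are absorbed on the left-hand side of the master estimate. The nonlinear source terms $\div(au)$, $\nabla\Lambda^{2s_*-2}\div(au)$ and $u\cdot\nabla u$ are then treated by the standard Besov product/composition laws recalled in the appendix, splitting $a=a^\ell+a^h$ and $u=u^\ell+u^h$ so as to pair low-frequency $L^p$-critical regularities with high-frequency $L^2$-critical ones, and the outcome is a bound of order $\cE_p(t)\cD_p(t)$ in the $L^1_t$ norm. Finally, $u^\ell$ is recovered from $u=z-\nabla\Lambda^{\alpha-d}a$ via the identity $\|\nabla\Lambda^{\alpha-d}a\|^\ell_{\dot B^{d/p}_{p,1}}=\|a\|^\ell_{\dot B^{d/p-1+2s_*}_{p,1}}$, and the $\partial_t a$ estimate follows directly from $\partial_t a=-\div u-\div(au)$.

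The main obstacle I anticipate is the nonlinear bookkeeping across the two frequency regimes, caused by the fact that $a$ and $u$ are estimated at regularities differing by one derivative. In particular, controlling $\div(au)$ in the weak norm $\tilde L^1_t(\dot B^{d/p-1+2s_*}_{p,1})$ requires combining the low-frequency information $a^\ell\in \tilde L^1_t(\dot B^{d/p-1+2s_*}_{p,1})$ with the high-frequency one $a^h\in \tilde L^1_t(\dot B^{d/2+1}_{2,1})$ through the embedding $\dot B^{d/2}_{2,1}\hookrightarrow \dot B^{d/p}_{p,1}$; this embedding, valid precisely under the constraint \eqref{p} on $p$, is the structural reason behind the admissible range of $p$ and will dictate the care needed in balancing each paraproduct/remainder term.
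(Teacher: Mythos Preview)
Your proposal is correct and follows essentially the same route as the paper: introduce the effective unknown $z=u+\nabla\Lambda^{\alpha-d}a$, derive the fractional-heat equation for $a$ and the damped equation for $z$, perform the localized $L^p$ energy estimates at regularity $\dot B^{d/p-1}_{p,1}$ for $a$ and $\dot B^{d/p}_{p,1}$ for $z$, absorb the higher-order linear couplings by taking $J_1$ small, bound the nonlinearities by product laws, and finally recover $u^\ell$ from $z$ and estimate $\partial_t a$ from $\eqref{E-Rli}_1$. Two minor points to tighten: the $\div z$ source in the $a$-equation enters at the base level $\dot B^{d/p-1}_{p,1}$ (giving directly $\|z\|^\ell_{\dot B^{d/p}_{p,1}}$, which is absorbed by the $z$-dissipation after weighting the two estimates, not by smallness of $J_1$), and the recovery $\|u\|^\ell_{\tilde L^\infty_t(\dot B^{d/p}_{p,1})}\lesssim \|z\|^\ell_{\tilde L^\infty_t(\dot B^{d/p}_{p,1})}+\|a\|^\ell_{\tilde L^\infty_t(\dot B^{d/p-1+2s_*}_{p,1})}$ needs one further application of \eqref{HLEst} since only $\|a\|^\ell_{\tilde L^\infty_t(\dot B^{d/p-1}_{p,1})}$ is available.
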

\begin{proof}
Defining the effective velocity 
$$z\triangleq u
+\nabla\Lambda^{\alpha-d}a,$$
we  rewrite the equation $\eqref{E-Rli}_1$ as
\begin{equation}
\label{low-a}
\begin{array}{l}
\partial_ta
+\Lambda^{2s_*}a
=-\div z -\div(a u).
\end{array}
\end{equation}
Applying $\dot\Delta_j$ to $\eqref{low-a}$,  multiplying both sides of it by $|a_j|^{p-2}a_j$ with $a_j=\dot\Delta_j a$, and then integrating the resulting equation over $\R^d$,
we derive
\begin{equation*}
\begin{aligned}
&\frac 1p\frac d{dt}\|a_j\|_{L^p}^p +2^{j2s_*}\|a_j\|_{L^p}^p
\\&\quad\lesssim\big(\|\div z_j\|_{L^p}
+\|\div (au)_j\|_{L^p}\big)
\|a_j\|_{L^p}^{p-1},
\end{aligned}
\end{equation*}
where we used Lemma \ref{low-fra-Lp}. Hence, Lemma \ref{difference-delta} ensures that
\begin{align*}
&\|a_j\|_{L^\infty_t(L^p)} +2^{j2s_*}\|a_j\|_{L^1_t(L^p)}
\\&\quad\lesssim\|(a_0)_j\|_{L^p} +\|\div z_j\|_{L^1_t(L^p)}
+\|\div (au)_j\|_{L^1_t(L^p)}.
\end{align*}
Multiplying both sides of the above inequality by $2^{j(\frac dp-1)}$, summing up over $j\leq J_1$ \rm(which will be chosen below\rm), 
and then using Bernstein's inequality,
we infer that
\begin{equation}\label{lowa}
\begin{aligned}
&\|a\|_{\tilde L_t^\infty
(\dot B^{\frac dp-1}_{p,1})}^{\ell}
+\|a\|_{\tilde L_t^1
(\dot B^{\frac dp-1+2s_*}_{p,1})}^{\ell}
\\&\quad\leq C_{1}\big(\|a_{0}\|_{
\dot B^{\frac dp-1}_{p,1}}^{\ell}
+\|z\|_{\tilde L_t^1
(\dot B^{\frac dp}_{p,1})}^{\ell}
+\|au\|_{\tilde L_t^1
(\dot B^{\frac dp}_{p,1})}^{\ell}\big).
\end{aligned}
\end{equation}

To handle the second term on the right-side of \eqref{lowa}, one needs to estimate the effective 
velocity $z.$ According to the definition of $z$ and $\eqref{E-Rli}$, the equation of $z$ reads 
\begin{align}\label{low-z}
\partial_t z+ z=-\nabla\Lambda^{2s_*-2}\div z
-\nabla\Lambda^{4s_*-2}a-\nabla\Lambda^{2s_*-2}\div(au)
-u\cdot\nabla u.
\end{align}
Applying $\dot\Delta_j$ to $\eqref{low-z}$, then multiplying both sides of it by $|z_j|^{p-2}z_j$ and integrating the resulting equation over $\R^d$,
we have
\begin{equation*}
\begin{aligned}
&\frac 1p\frac d{dt}\|z_j\|_{L^p}^p +\|z_j\|_{L^p}^p
\\&\quad\lesssim\big(\|\nabla\Lambda^{2s_*-2}\div z_j\|_{L^p}
+\|\nabla\Lambda^{4s_*-2}a_j\|_{L^p}
\\&\quad\quad+\|\nabla\Lambda^{2s_*-2}\div(au)_j\|_{L^p}
+\|(u\cdot\nabla u)_j\|_{L^p}\big)\|z_j\|_{L^p}^{p-1}.
	\end{aligned}
\end{equation*}
Let $z|_{t=0}=z_0 \triangleq
u_0+\nabla\Lambda^{2s_*-2}a_0$. Note that Lemma  \ref{difference-delta} gives rise to
\begin{equation*}
\begin{aligned}
&\|z_j\|_{L^\infty_t(L^p)} +\|z_j\|_{L^1_t(L^p)}
\\&\quad\lesssim\|(z_0)_j\|_{L^p}
+\|\nabla\Lambda^{2s_*-2}\div z_j\|_{L^1_t(L^p)}
+\|\nabla\Lambda^{4s_*-2}a_j\|_{L^1_t(L^p)}
\\&\quad\quad+\|\nabla\Lambda^{2s_*-2}\div(au)_j\|_{L^1_t(L^p)}
+\|(u\cdot\nabla u)_j\|_{L^1_t(L^p)}.
\end{aligned}
\end{equation*}
Multiplying both sides of the above inequality by $2^{j\frac dp}$, and then summing up over $j\leq J_1$, we get
\begin{equation}\label{lowz1}
\begin{aligned}
&\|z\|_{\tilde L_t^\infty
(\dot B^\frac dp_{p,1})}^{\ell}
+\|z\|_{\tilde L_t^1
(\dot B^{\frac dp}_{p,1})}^{\ell}
\\&\quad\leq C_2\big(\|z_0\|_{\dot B^\frac dp_{p,1}}^{\ell}
+\| z\|_{\tilde L_t^1
(\dot B^{\frac dp+2s_*}_{p,1})}^{\ell} 
+\|a\|_{\tilde L_t^1
(\dot B^{\frac dp+4s_*-1}_{p,1})}^{\ell}
\\&\quad\quad+\|au\|_{\tilde L_t^1
(\dot B^{\frac dp+2s_*}_{p,1})}^{\ell}
+\|u\cdot\nabla u\|_{\tilde L_t^1
(\dot B^{\frac dp}_{p,1})}^{\ell}\big).
\end{aligned}
\end{equation}
Here owing to \eqref{HLEst} and $s_*>0$, the higher order linear terms on the right hand side of \eqref{lowz1} can be analyzed by 
\begin{equation*}
\| z\|_{\tilde L_t^1
(\dot B^{\frac dp+2s_*}_{p,1})}^{\ell}
\leq2^{J_12s_*}\| z\|_{\tilde L_t^1
(\dot B^{\frac dp}_{p,1})}^{\ell},
\quad\quad
\|a\|_{\tilde L_t^1
(\dot B^{\frac dp+4s_*-1}_{p,1})}^{\ell}
\leq2^{J_12s_*}\|a\|_{\tilde L_t^1
(\dot B^{\frac dp-1+2s_*}_{p,1})}^{\ell}.
\end{equation*}
Now, multiplying \eqref{lowz1} by $2C_1$
and adding it to \eqref{lowa}, we arrive at
\begin{equation*}
\begin{aligned}
&\|a\|_{\tilde L_t^\infty
(\dot B^{\frac dp-1}_{p,1})}^{\ell}
+(1-2C_1C_22^{J_12s_*})\|a\|_{\tilde L_t^1
(\dot B^{\frac dp-1+2s_*}_{p,1})}^{\ell}\\
&\quad\quad+2C_1\|z\|_{\tilde L_t^\infty
(\dot B^\frac dp_{p,1})}^{\ell}
+C_1(1-2C_22^{J_12s_*})\|z\|_{\tilde L_t^1
(\dot B^{\frac dp}_{p,1})}^{\ell}
\\&\quad\leq C_{1}\|a_{0}\|_{
\dot B^{\frac dp-1}_{p,1}}^{\ell}
+2C_1C_2\|z_0\|_{\dot B^\frac dp_{p,1}}^{\ell}
+C_{1}\|au\|_{\tilde L_t^1
(\dot B^{\frac dp}_{p,1})}^{\ell}
\\&\quad\quad
+2C_1C_2\|au\|_{\tilde L_t^1
(\dot B^{\frac dp+2s_*}_{p,1})}^{\ell}
+2C_1C_2\|u\cdot\nabla u\|_{\tilde L_t^1
(\dot B^{\frac dp}_{p,1})}^{\ell}.
\end{aligned}
\end{equation*}
One takes the suitably small integer $J_1$ such that 
\begin{equation}\label{J_1}
2C_1C_22^{J_12s_*}<1
\quad\text{and}\quad
2C_22^{J_12s_*}<1.
\end{equation}
It thus follows that
\begin{equation}\label{lowaz}
\begin{aligned}
&\|a\|_{\tilde L_t^\infty
(\dot B^{\frac dp-1}_{p,1})}^{\ell}
+\|a\|_{\tilde L_t^1
(\dot B^{\frac dp-1+2s_*}_{p,1})}^{\ell}
+\|z\|_{\tilde L_t^\infty
(\dot B^\frac dp_{p,1})}^{\ell}
+\|z\|_{\tilde L_t^1
(\dot B^{\frac dp}_{p,1})}^{\ell}
\\&\quad\lesssim\|a_{0}\|_{
\dot B^{\frac dp-1}_{p,1}}^{\ell}
+\|z_0\|_{\dot B^\frac dp_{p,1}}^{\ell}
+\|au\|_{\tilde L_t^1
(\dot B^{\frac dp}_{p,1})}^{\ell}
+\|u\cdot\nabla u\|_{\tilde L_t^1
(\dot B^{\frac dp}_{p,1})}^{\ell}.
\end{aligned}
\end{equation}
Thanks to the definition of $z_0$, \eqref{HLEst} and $s_*>0,$ the following bounds hold:
\begin{equation*}
\begin{aligned}
\|a_{0}\|_{
\dot B^{\frac dp-1}_{p,1}}^{\ell}
+\|z_0\|_{\dot B^\frac dp_{p,1}}^{\ell}
&\lesssim\|a_{0}\|_{
\dot B^{\frac dp-1}_{p,1}}^{\ell}
+\|u_0\|_{\dot B^\frac dp_{p,1}}^{\ell}
+\|\Lambda^{2s_*-1}a_0\|_{\dot B^\frac dp_{p,1}}^{\ell} 
\\&\lesssim\|a_{0}\|_{
\dot B^{\frac dp-1}_{p,1}}^{\ell}
+\|u_0\|_{\dot B^\frac dp_{p,1}}^{\ell}
\lesssim \cE_{p,0}.
\end{aligned}
\end{equation*}
We now deal with every nonlinear term in \eqref{lowaz}.
By product laws in Lemma \ref{ClassicalProductLawEst1}, the nonlinear terms can be estimated by
\begin{equation*}
\|au\|_{\tilde L_t^1
(\dot B^{\frac dp}_{p,1})}^{\ell}
\lesssim\|u\|_{\tilde L_t^1
(\dot B^{\frac dp}_{p,1})}
\|a\|_{\tilde L_t^\infty
(\dot B^{\frac dp}_{p,1})},
\end{equation*}
and
\begin{equation*}\label{u-nabla-u}
\begin{aligned}
\|u\cdot\nabla u\|_{\tilde L_t^1
(\dot B^{\frac dp}_{p,1})}^{\ell}
\lesssim\|u\|_{\tilde L_t^\infty
(\dot B^{\frac dp}_{p,1})}
\| u\|_{\tilde L_t^1
(\dot B^{\frac dp+1}_{p,1})}.
\end{aligned}
\end{equation*}
Then, by applying the low and high frequency decomposition,  Bernstein's inequality and $s_*<1$, we obtain 
\begin{equation}\label{au-lip}
\begin{aligned}
\|u\|_{\tilde L_t^1
(\dot B^{\frac dp}_{p,1})}
+\|(a,u)\|_{\tilde L_t^1
(\dot B^{\frac dp+1}_{p,1})}
&\lesssim\|a\|_{\tilde L_t^1
(\dot B^{\frac dp+2s_*-1}_{p,1})}^\ell
+\|a\|_{\tilde L_t^1
(\dot B^{\frac d2+1}_{2,1})}^h
\\&\quad+\|u\|_{\tilde L_t^1
(\dot B^{\frac dp}_{p,1})}^\ell
+\|u\|_{\tilde L_t^1
(\dot B^{\frac d2+2-s_*}_{2,1})}^h,
\end{aligned}
\end{equation}
and
\begin{equation}\label{au-dp}
\begin{aligned}
\|(a,u)\|_{\tilde L_t^\infty
(\dot B^{\frac dp}_{p,1})}
&\lesssim\|a\|_{\tilde L_t^\infty
(\dot B^{\frac dp-1}_{p,1})}^\ell
+\|a\|_{\tilde L_t^\infty
(\dot B^{\frac d2+1}_{2,1})}^h
\\&\quad+\|u\|_{\tilde L_t^\infty
(\dot B^{\frac dp}_{p,1})}^\ell
+\|u\|_{\tilde L_t^\infty
(\dot B^{\frac d2+2-s_*}_{2,1})}^h.
\end{aligned}
\end{equation}
Substituting the above estimates into \eqref{lowaz} yields
\begin{equation}
\begin{aligned}\label{az-low-1}
\|a\|_{\tilde L_t^\infty
(\dot B^{\frac dp-1}_{p,1})}^{\ell}
+\|a\|_{\tilde L_t^1
(\dot B^{\frac dp-1+2s_*}_{p,1})}^{\ell}
+\|z\|_{\tilde L_t^\infty
(\dot B^\frac dp_{p,1})}^{\ell}
+\|z\|_{\tilde L_t^1
(\dot B^{\frac dp}_{p,1})}^{\ell}
\lesssim\cE_{p,0}+\cE_p(t)\cD_p(t).
\end{aligned}
\end{equation}
Then, together with the definition of $z$ and \eqref{az-low-1}, the estimates of $u$ can be recovered as follows
\begin{equation}\label{u-low-1}
\begin{aligned}
&\|u\|_{\tilde L_t^\infty
(\dot B^\frac dp_{p,1})}^{\ell}
+\|u\|_{\tilde L_t^1
(\dot B^{\frac dp}_{p,1})}^{\ell}
\\&\quad\lesssim\|z\|_{\tilde L_t^\infty
(\dot B^\frac dp_{p,1})}^{\ell}
+\|\Lambda^{2s_*-1}a\|_{\tilde L_t^\infty
(\dot B^\frac dp_{p,1})}^{\ell}
+\|z\|_{\tilde L_t^1
(\dot B^{\frac dp}_{p,1})}^{\ell}
+\|\Lambda^{2s_*-1}a\|_{\tilde L_t^1
(\dot B^{\frac dp}_{p,1})}^{\ell}
\\&\quad\lesssim\|a\|_{\tilde L_t^\infty
(\dot B^{\frac dp-1}_{p,1})}^{\ell}
+\|a\|_{\tilde L_t^1
(\dot B^{\frac dp-1+2s_*}_{p,1})}^{\ell}
+\|z\|_{\tilde L_t^\infty
(\dot B^\frac dp_{p,1})}^{\ell}
+\|z\|_{\tilde L_t^1
(\dot B^{\frac dp}_{p,1})}^{\ell}
\\&\quad\lesssim\cE_{p,0}+\cE_p(t)\cD_p(t).
\end{aligned}
\end{equation}

Finally, we are going to control the term $\|\partial_ta\|_{\tilde L_t^1(\dot B^{\frac dp}_{p,1})}^\ell.$ It follows from $\eqref{E-Rli}_1$, \eqref{au-lip} and \eqref{au-dp} that
\begin{equation}\label{partial-a-1}
\begin{aligned}
\|\partial_ta\|_{\tilde L_t^1(\dot B^{\frac dp}_{p,1})}^\ell
&\lesssim \|u\|_{\tilde L_t^1(\dot B^{\frac dp+1}_{p,1})}^\ell
+\|u\|_{\tilde L_t^\infty(\dot B^{\frac dp}_{p,1})}\|a\|_{\tilde L^1_t(\dot{B}^{\frac{d}{p}+1}_{p,1})}
\\&\quad\quad+\|a\|_{\tilde L_t^\infty(\dot B^{\frac dp}_{p,1})}
\| u\|_{\tilde L_t^1(\dot B^{\frac dp+1}_{p,1})}
\\&\lesssim\cE_{p,0}+\cE_p(t)\cD_p(t).
\end{aligned}
\end{equation}
The combination of \eqref{az-low-1}-\eqref{partial-a-1} gives rise to \eqref{low-global}.
\end{proof}
\subsubsection{High-frequency analysis.}
Second, we have the high-frequency estimates in the $L^{2}$ framework.
\begin{Lemma}\label{lemma-high-global}
Under the assumptions of Proposition \ref{priori-global}, it holds that
\begin{equation}\label{high-global}
\begin{aligned}
&\|a\|_{\tilde L_t^\infty
(\dot B^{\frac d2+1}_{2,1})}^{h}
+\|u\|_{\tilde L_t^\infty
(\dot B^{\frac d2+2-s_*}_{2,1})}^{h}
+\|a\|_{\tilde L_t^1
(\dot B^{\frac d2+1}_{2,1})}^{h}
+\|u\|_{\tilde L_t^1
(\dot B^{\frac d2+2-s_*}_{2,1})}^{h}
+\|\partial_ta\|_{\tilde L_t^1(\dot B^{\frac dp}_{p,1})}^h
\\&\quad\lesssim \cE_{p,0}+\cE_p(t)\cD_p(t).
\end{aligned}
\end{equation}
\end{Lemma}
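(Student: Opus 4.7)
The plan is to exploit the symmetrization singled out in Subsection \ref{subsection12}. Setting $U_1\triangleq\Lambda^{s_*}a$ and $U_2\triangleq\Lambda m$ with $m\triangleq\Lambda^{-1}\div u$, and applying $\dot\Delta_j$ for $j\geq J_1-1$, the Hodge-decomposed version of \eqref{E-Rli} becomes
\begin{equation*}
\begin{cases}
\partial_t \dot\Delta_jU_1 +\Lambda^{s_*}\dot\Delta_jU_2 = -\Lambda^{s_*}\dot\Delta_j\div(au),\\
\partial_t \dot\Delta_jU_2 -\Lambda^{s_*}\dot\Delta_jU_1 +\dot\Delta_jU_2 = -\dot\Delta_j\div(u\cdot\nabla u),
\end{cases}
\end{equation*}
plus a purely damped equation for $\omega\triangleq\Lambda^{-1}\curl u$. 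Because the antisymmetric coupling $\pm\Lambda^{s_*}$ disappears from the $L^2$ inner product, a standard energy identity yields dissipation on $\dot\Delta_jU_2$ but not on $\dot\Delta_jU_1$, mirroring the partial dissipativity of the system.

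To recover dissipation of $\dot\Delta_jU_1$ I would implement a frequency-localized hypocoercivity correction. Differentiating the cross term $\int \dot\Delta_jU_1\cdot\dot\Delta_jU_2\,dx$ in time produces
\begin{equation*}
\|\Lambda^{s_*/2}\dot\Delta_jU_1\|_{L^2}^2-\|\Lambda^{s_*/2}\dot\Delta_jU_2\|_{L^2}^2-\int \dot\Delta_jU_1\cdot\dot\Delta_jU_2\,dx +\text{(nonlinear)},
\end{equation*}
and at high frequencies Bernstein's inequality upgrades $\|\Lambda^{s_*/2}\dot\Delta_j\,\cdot\,\|_{L^2}$ to $\sim 2^{js_*/2}\|\dot\Delta_j\,\cdot\,\|_{L^2}$. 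Combined with the $L^2$ identity this suggests the Lyapunov functional
\begin{equation*}
\cL_j^2\triangleq \|\dot\Delta_jU_1\|_{L^2}^2+\|\dot\Delta_jU_2\|_{L^2}^2+2\eta\int \dot\Delta_jU_1\cdot\dot\Delta_jU_2\,dx,
\end{equation*}
with $\eta>0$ a sufficiently small absolute constant so that $\cL_j\sim\|(\dot\Delta_jU_1,\dot\Delta_jU_2)\|_{L^2}$ and
\begin{equation*}
\tfrac12\tfrac d{dt}\cL_j^2 + c\,\|(\dot\Delta_jU_1,\dot\Delta_jU_2)\|_{L^2}^2 \lesssim \|\text{RHS}_j\|_{L^2}\,\cL_j
\end{equation*}
uniformly for $j\geq J_1-1$, reproducing at the block level the exponential damping predicted by $\mathrm{Re}\,\lambda(\xi)\sim-1/2$ in the high-frequency regime.

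The nonlinear right-hand sides are treated by Bony's paradecomposition together with the product laws recalled in the appendix, paying attention to the low/high splitting of each factor so that low-frequency inputs land in the $L^p$-type norms that constitute $\cE_p(t)$ and $\cD_p(t)$ while high-frequency inputs remain in the $L^2$ scale; the transport-type term $\dot\Delta_j\div(u\cdot\nabla u)$ is handled through the commutator $[\dot\Delta_j,u\cdot\nabla]u$ together with an integration by parts producing $\int\div u\,|\dot\Delta_j u|^2$, which avoids the formal loss of one derivative. Multiplying $\cL_j$ by $2^{j(\frac d2+1-s_*)}$, summing $\ell^1$ over $j\geq J_1-1$, integrating in time, and invoking Bernstein to translate back from $(\Lambda^{s_*}a,\Lambda m,\omega)$ to $(a,u)$ recovers the four Besov norms in \eqref{high-global}; the bound on $\|\partial_ta\|_{\widetilde L^1_t(\dot B^{d/p}_{p,1})}^h$ is then read directly from the density equation $\partial_ta=-\div u-\div(au)$ using the estimates already obtained. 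The principal obstacle I anticipate is the careful matching of $L^p$-scale low frequencies with $L^2$-scale high frequencies within each nonlinear product, together with the tuning of $\eta$ so that the $\Lambda^{s_*/2}$ cross terms stay subdominant on the range $j\geq J_1-1$; the symmetrization through $U_1,U_2$ is what makes the hypocoercivity argument compatible with the fractional Riesz operator that would otherwise obstruct the classical symmetric-hyperbolic framework.
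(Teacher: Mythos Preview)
Your overall hypocoercivity strategy is in the same spirit as the paper's, but the proposal has two concrete gaps that prevent it from closing.

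First, the cross term you choose, $2\eta\int \dot\Delta_jU_1\,\dot\Delta_jU_2\,dx$, does not work with an \emph{absolute} constant $\eta$. Differentiating it produces, as you correctly compute, $+\eta\|\Lambda^{s_*/2}\dot\Delta_jU_1\|_{L^2}^2-\eta\|\Lambda^{s_*/2}\dot\Delta_jU_2\|_{L^2}^2$; at block $j$ the second (bad) term is $\sim\eta\,2^{js_*}\|\dot\Delta_jU_2\|_{L^2}^2$, which cannot be absorbed by the main dissipation $\|\dot\Delta_jU_2\|_{L^2}^2$ uniformly in $j\geq J_1-1$. The paper instead uses the unbalanced cross term $-2\tilde c\int a_j\,\div u_j\,dx$ (equivalently $\int\Lambda^{-s_*}\dot\Delta_jU_1\,\dot\Delta_jU_2\,dx$ up to sign), for which the bad contribution stays at the same scale as $\|\Lambda u_j\|_{L^2}^2$ and is absorbable for a fixed small $\tilde c$.

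Second, and more seriously, you handle the derivative loss in $u\cdot\nabla u$ via the commutator $[\dot\Delta_j,u\cdot\nabla]u$ plus integration by parts, but you relegate the density-equation nonlinearity $\div(au)=u\cdot\nabla a+a\div u$ to ``Bony's paradecomposition together with the product laws''. That does not suffice. After writing $\dot\Delta_j(a\div u)=\dot S_{j-1}a\,\div u_j+R_j^2$ and performing the $U_1$-energy, one is left with
\[
\int\dot S_{j-1}a\,\div u_j\,\Lambda^{2s_*}a_j\,dx
=-\int\nabla\dot S_{j-1}a\cdot u_j\,\Lambda^{2s_*}a_j\,dx
-\int\dot S_{j-1}a\, u_j\cdot\nabla\Lambda^{2s_*}a_j\,dx,
\]
and the last integral has size $\lesssim\|a\|_{L^\infty}\,2^{js_*}\|\Lambda u_j\|_{L^2}\|\Lambda^{s_*}a_j\|_{L^2}$, carrying an unbounded factor $2^{js_*}$ that no product/commutator estimate removes. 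The paper cancels this term \emph{exactly} by augmenting the Lyapunov functional with the weighted energy $\int\dot S_{j-1}a\,(\Lambda u_j)^2\,dx$, obtained by testing the localized velocity equation $\eqref{E-Rhigh}_2$ against $\dot S_{j-1}a\,\Lambda u_j$ (see \eqref{goal for the third of-diss-a}); together with the fractional commutators $R_j^4,R_j^5,R_j^6$ (Lemma~\ref{frac-commutator}) this is precisely the structural mechanism that closes the high-frequency estimate, and it is absent from your functional $\cL_j^2$.
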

\begin{proof}
In the high-frequency regime, we will employ a {\emph{hypercoercivity}} argument and take advantage of commutator estimates to avoid the loss of derivatives.
Applying $\dot\Delta_j$ to \eqref{E-Rli}, we have
\begin{equation}\label{E-Rli1}
\left\{\begin{array}{l}
\partial_ta_j+\div u_j+\dot  S_{j-1}a\div u_j 
=-\dot  S_{j-1}u\cdot \nabla a_j+R_j^1+R_j^2,\\
\partial_t u_j
+ u_j
+\nabla\Lambda^{2s_*-2}a_j
=-\dot  S_{j-1}u\cdot\nabla u_j+R_j^3
\end{array}\right.
\end{equation}
with
\begin{equation*}
\left\{\begin{array}{l}
R_{j}^1=\dot S_{j-1}u\cdot\nabla a_{j}-(u\cdot\nabla a)_{j},\\
R_{j}^2=\dot S_{j-1}a\div u_{j}-(a\div u)_{j},\\
R_{j}^3=\dot S_{j-1}u\cdot\nabla u_{j}-(u\cdot\nabla u)_{j}.
\end{array}\right.
\end{equation*}
By applying 
$\Lambda^{s_*}$
to $\eqref{E-Rli1}_{1}$ and $\Lambda$ to $\eqref{E-Rli1}_2$, one gets
\begin{equation}\label{E-Rhigh}
\left\{\begin{array}{l}
\partial_t\Lambda^{s_*}a_j+\Lambda^{s_*}\div u_j
+\Lambda^{s_*}(\dot  S_{j-1}a\div u_j) 
\\\quad\quad=-R_j^4-\dot  S_{j-1}u\cdot \nabla\Lambda^{s_*}a_j
+\Lambda^{s_*}R_j^1
+\Lambda^{s_*}R_j^2,\\
\partial_t \Lambda u_j+ \Lambda u_j
+\nabla\Lambda^{2s_*-1} a_j
=-R_j^5-\dot  S_{j-1}u\cdot \nabla\Lambda u_j
+\Lambda R_j^3
\end{array}\right.
\end{equation}
with
\begin{equation*}
\left\{\begin{array}{l}
R_j^4=[\Lambda^{s_*}, \dot  S_{j-1}u\cdot \nabla]a_j,\\
R_j^5=[\Lambda, \dot  S_{j-1}u\cdot \nabla]u_j.\\
\end{array}\right.
\end{equation*}
Multiplying both sides of $\eqref{E-Rhigh}_1-\eqref{E-Rhigh}_2$ by $\Lambda^{s_*}a_j$
and $\Lambda u_j$ respectively,  adding these together,
and then integrating the resulting equation over $\R^d$, we derive
\begin{equation}\label{dissipation-1u}
\begin{aligned}
&\frac12\frac d{dt}(\|\Lambda^{s_*}a_j\|_{L^2}^2
+\|\Lambda u_j\|_{L^2}^2)
+\|\Lambda u_j\|_{L^2}^2
+\int_{\R^{d}}\Lambda^{s_*}(\dot  S_{j-1}a\div u_j)\Lambda^{s_*}a_j\,dx
\\&\quad=-\int_{\R^{d}}R_j^4\Lambda^{s_*}a_j\,dx
-\int_{\R^{d}}\dot  S_{j-1}u\cdot \nabla\Lambda^{s_*}a_j\Lambda^{s_*}a_j\,dx
\\&\quad\quad+\int_{\R^{d}}\Lambda^{s_*}R_j^1\Lambda^{s_*}a_j\,dx
+\int_{\R^{d}}\Lambda^{s_*}R_j^2\Lambda^{s_*}a_j\,dx
-\int_{\R^{d}}R_j^5\cdot \Lambda u_j\,dx
\\&\quad\quad-\int_{\R^{d}} \dot  S_{j-1}u\cdot\nabla \Lambda u_j\cdot \Lambda u_j\,dx
+\int_{\R^{d}}\Lambda R_j^3\cdot \Lambda u_j\,dx.
\end{aligned}
\end{equation}
The difficulty is to deal with the term $\int_{\R^{d}}\Lambda^{s_*}(\dot  S_{j-1}a\div u_j)\Lambda^{s_*} a_j\,dx$. Noticing the definition of fractional power commutator and using the fact that the Riesz operator is symmetric, we have
\begin{equation*}
\begin{aligned}
&\int_{\R^{d}}\Lambda^{s_*}(\dot  S_{j-1}a\div u_j) \Lambda^{s_*}a_j\,dx\\
&\quad=\int_{\R^{d}}\dot  S_{j-1}a\div u_j\Lambda^{2s_*}a_j\,dx
\\
&\quad =-\int_{\R^{d}}\nabla\dot S_{j-1}a\cdot(\Lambda^{2s_*}a_ju_j)\,dx
-\int_{\R^{d}}\dot  S_{j-1}a u_j\cdot\nabla\Lambda^{2s_*}a_j\,dx.
\end{aligned}
\end{equation*}
To cancel the term $\int_{\R^{d}}\dot  S_{j-1}a u_j\cdot\nabla\Lambda^{2s_*}a_j\,dx$, we deduce from $\eqref{E-Rhigh}_2$ that
\begin{equation*}
\begin{aligned}
&\int_{\R^{d}}\dot  S_{j-1}a\partial_t\Lambda u_j\cdot \Lambda u_j \,dx
+\int_{\R^{d}}\dot  S_{j-1}a\Lambda u_j\cdot\Lambda u_j\,dx
  +\int_{\R^{d}} \dot S_{j-1}a\nabla\Lambda^{2s_*} a_j\cdot u_j\,dx
\\&\quad=-\int_{\R^{d}} R_j^6 u_j\,dx
-\int_{\R^{d}}\dot S_{j-1}a R_j^5\Lambda u_j\,dx
-\int_{\R^{d}}\dot  S_{j-1}a\dot  S_{j-1}u\cdot\nabla \Lambda u_j\cdot \Lambda u_j\,dx
\\&\quad\quad+\int_{\R^{d}}\dot  S_{j-1}a\Lambda R_j^3\cdot \Lambda u_j\,dx
,
\end{aligned}
\end{equation*}
where we have used 
$$R_j^6=[\Lambda, \dot S_{j-1}a\nabla]\Lambda^{2s_*-1} a_j,$$
and
\begin{equation*}
\begin{aligned}
&\int_{\R^{d}}\dot  S_{j-1}a\nabla\Lambda^{2s_*-1} a_j\cdot \Lambda u_j\,dx&=\int_{\R^{d}} R_j^6 u_j\,dx
+\int_{\R^{d}} \dot S_{j-1}a\nabla\Lambda^{2s_*} a_j\cdot u_j\,dx.
\end{aligned}
\end{equation*}
It thus follows that
\begin{equation}\label{goal for the third of-diss-a}
\begin{aligned}
&\frac12\frac d{dt}\int_{\R^{d}}\dot  S_{j-1}a(\Lambda u_j)^2\,dx
+\int_{\R^{d}}\dot  S_{j-1}a(\Lambda u_j)^2\,dx
+\int_{\R^{d}} \dot S_{j-1}a\nabla\Lambda^{2s_*} a_j\cdot u_j\,dx
\\&\quad=-\int_{\R^{d}} R_j^6 u_j\,dx
-\int_{\R^{d}}\dot  S_{j-1}a R_j^5 \Lambda u_j\,dx
-\int_{\R^{d}}\dot  S_{j-1}a\dot  S_{j-1}u\cdot\nabla\Lambda u_j\cdot \Lambda u_j\,dx
\\&\quad\quad+\int_{\R^{d}}\dot  S_{j-1}a\Lambda R_j^3\cdot \Lambda u_j\,dx
+\frac{1}{2}\int_{\R^{d}}(\Lambda u_j)^2  \partial_t\dot S_{j-1}a \,dx.
\end{aligned}
\end{equation}
Adding \eqref{dissipation-1u} and \eqref{goal for the third of-diss-a} together, we get
\begin{equation*}
\begin{aligned}
&\frac12\frac d{dt}\left(\|\Lambda^{s_*}a_j\|_{L^2}^2
+\|\Lambda u_j\|_{L^2}^2
+\int_{\R^{d}}\dot  S_{j-1}a(\Lambda u_j)^2\,dx\right)
+\|\Lambda u_j\|_{L^2}^2
\\&\quad=\int_{\R^{d}}\nabla\dot  S_{j-1}a \cdot(\Lambda^{2s_*}a_ju_j)\,dx
-\int_{\R^{d}}R_j^4\Lambda^{s_*}a_j\,dx
-\int_{\R^{d}}\dot  S_{j-1}u\cdot \nabla\Lambda^{s_*}a_j\Lambda^{s_*}a_j\,dx
\\&\quad\quad+\int_{\R^{d}}\Lambda^{s_*}R_j^1\Lambda^{s_*}a_j\,dx
+\int_{\R^{d}}\Lambda^{s_*}R_j^2\Lambda^{s_*}a_j\,dx
-\int_{\R^{d}}R_j^5\Lambda u_j\,dx
\\&\quad\quad-\int_{\R^{d}}\dot  S_{j-1}u\cdot\nabla \Lambda u_j\cdot\Lambda u_j\,dx
+\int_{\R^{d}}\Lambda R_j^3\Lambda u_j\,dx
-\int_{\R^{d}}\dot  S_{j-1}a(\Lambda u_j)^2\,dx
\\&\quad\quad-\int_{\R^{d}} R_j^6 u_j\,dx
-\int_{\R^{d}}\dot  S_{j-1}a R_j^5 \Lambda u_j\,dx
-\int_{\R^{d}}\dot  S_{j-1}a\dot  S_{j-1}u\cdot\nabla\Lambda u_j\cdot \Lambda u_j\,dx
\\&\quad\quad+\int_{\R^{d}}\dot  S_{j-1}a\Lambda R_j^3\cdot \Lambda u_j\,dx
+\frac{1}{2}\int_{\R^{d}}(\Lambda u_j)^2  \partial_t\dot S_{j-1}a \,dx.
\end{aligned}
\end{equation*}
Thanks to Bernstein's inequality and Lemma \ref{frac-commutator}, we obtain
\begin{align*}
&\int_{\R^{d}}\nabla\dot  S_{j-1}a \cdot(\Lambda^{2s_*}a_ju_j)\,dx
-\int_{\R^{d}}R_j^4\Lambda^{s_*}a_j\,dx
\\&\quad\quad-\int_{\R^{d}}\dot  S_{j-1}u\cdot \nabla\Lambda^{s_*}a_j\Lambda^{s_*}a_j\,dx
-\int_{\R^{d}}\dot  S_{j-1}u\cdot\nabla \Lambda u_j\cdot\Lambda u_j\,dx
\\&\lesssim2^{j(s_*-1)}\|\nabla a\|_{L^\infty}\|\Lambda^{s_*}a_j\|_{L^2}\|\Lambda u_j\|_{L^2}
  +\|\nabla u\|_{L^\infty}\|(\Lambda^{s_*}a_j,\Lambda u_j)\|_{L^2}^2
 \\&\lesssim\big(2^{j(s_*-1)}\|\nabla a\|_{L^\infty}
  +\|\nabla u\|_{L^\infty}\big)
  \|(\Lambda^{s_*}a_j,\Lambda u_j)\|_{L^2}^2.
\end{align*}
Similarly, it is clear that
\begin{equation}\nonumber
\begin{aligned}
&\int_{\R^{d}}R_j^5\cdot\Lambda u_j\,dx
+\int_{\R^{d}} R_j^6\cdot u_j\,dx\\
&\quad\lesssim\|\nabla u\|_{L^\infty}\|\Lambda u_j\|_{L^2}^2
+2^{j(s_*-1)}
\|\nabla a\|_{L^\infty}
\|\Lambda^{s_*}a_j\|_{L^2}
\|\Lambda u_j\|_{L^2}.
\end{aligned}
\end{equation}
Bounding other nonlinear terms is similar and therefore the details are omitted here.
%
%
For $j\geq J_1-1,$ by \eqref{a-priori-supose}, we obtain
\begin{equation}\label{diss-u1}
\begin{aligned}
&\frac12\frac d{dt}\big(\|\Lambda^{s_*}a_j\|_{L^2}^2
+\|\Lambda u_j\|_{L^2}^2
+\int_{\R^{d}}\dot  S_{j-1}a(\Lambda u_j)^2\,dx\big)
+\|\Lambda u_j\|_{L^2}^2
\\&\quad\leq C_3
\|(\nabla a,\nabla u,\partial_ta)\|_{L^\infty}
\|(\Lambda^{s_*}a_j,\Lambda u_j)\|_{L^2}^2
\\&\quad\quad+C_3\|(\Lambda^{s_*}R_{j}^{1},\Lambda^{s_*}R_{j}^{2},\Lambda R_j^3)\|_{L^2}
\|(\Lambda^{s_*}a_j,\Lambda u_j)\|_{L^2}
\\&\quad\quad+C_3\|a\|_{L^\infty}\|\Lambda u_j\|_{L^2}^2.
\end{aligned}
\end{equation}

Let us next look at the dissipation of $a$ caused by Riesz interactions. 
Applying the operator $\div$ to $\eqref{E-Rli1}_2$, multiplying of it by $a_j$,  and  multiplying $\eqref{E-Rli1}_1$ by $\div u_j$, adding them together, then integrating the resulting equation over $\R^d$,
we deduce
\begin{equation}\label{diss-a-non}
	\begin{aligned}
		&-\frac d{dt}\int_{\R^{d}}a_j\div u_j\,dx
		+\|\Lambda^{s_*} a_j\|_{L^{2}}^2
		\\&\quad=\int_{\R^{d}}a_j\div u_j\,dx
		+
		\|\div u_j\|_{L^2}^2
		+\int_{\R^{d}}a_j\div(\dot S_{j-1}u\cdot\nabla u_j)\,dx  
		\\&\quad\quad-\int_{\R^{d}}a_j\div R_j^3\,dx
		+\int_{\R^{d}}\dot S_{j-1}a(\div u_j)^2\,dx
		\\&\quad\quad
		+\int_{\R^{d}}\dot S_{j-1}u\cdot\nabla a_j \div u_j\,dx
		-\int_{\R^{d}}R_j^1 \div u_j\,dx
		-\int_{\R^{d}} R_j^2 \div u_j\,dx.
	\end{aligned}
\end{equation}
The terms on the right-hand side of \eqref{diss-a-non} are analyzed as follows.
For $j\geq J_1-1,$ by virtue of Bernstein's inequality and \eqref{HLEst}, we gain
\begin{align*}
\int_{\R^{d}}a_j\div u_j\,dx
&
\lesssim2^{-js_*}\|\Lambda^{s_*}a_j\|_{L^2}\|\Lambda u_j\|_{L^2}
\leq \var_1\|\Lambda^{s_*}a_j\|_{L^2}^2
+C\var_1^{-1}2^{-2J_1s_*}\|\Lambda u_j\|_{L^2}^2
\end{align*}
with the constant $\var_1$ being suitable small.
Using integration by parts and Bernstein's inequality, we obtain
\begin{align*}
&\int_{\R^{d}}a_j\div(\dot S_{j-1}u\cdot\nabla u_j)\,dx
+\int_{\R^{d}}\dot S_{j-1}u\cdot\nabla a_j \div u_j\,dx
\\&\quad=\int_{\R^{d}}\dot S_{j-1}u\cdot\nabla(a_j\div u_j) \,dx
+\int_{\R^{d}}a_j\nabla\dot S_{j-1}u:\nabla u_j\,dx
\\&\quad\lesssim\|\nabla u\|_{L^\infty}2^{-J_1s_*}\| \Lambda u_j\|_{L^2}
\| \Lambda^{s_*} a\|_{L^2}.
\end{align*}
By \eqref{a-priori-supose}, one can conclude for $j\geq J_1-1$ that
\begin{equation}\label{diss-a1}
\begin{aligned}
&-\frac d{dt}\int_{\R^{d}}a_j\div u_j\,dx
+\|\Lambda^{s_*} a_j\|_{L^{2}}^2
\\&\quad\leq C_4\big(\|\Lambda u_j\|_{L^2}^2
+\|\nabla u\|_{L^\infty}
\|(\Lambda^{s_*}a_j,\Lambda u_j)\|_{L^2}^2
\\&\quad\quad+\|(R_j^1,R_j^2,\Lambda R_j^3)\|_{L^2}
\|(\Lambda^{s_*}a_j,\Lambda u_j)\|_{L^2}\big).
\end{aligned}
\end{equation}

Now we define the Lyapunov functional 
\begin{align*}
\mathcal{L}_j^2(t)&\triangleq
\|\Lambda^{s_*}a_j\|_{L^2}^2
+\|\Lambda u_j\|_{L^2}^2
+\int_{\R^{d}}\dot  S_{j-1}a(\Lambda u_j)^2\,dx
-2\tilde c\int_{\R^{d}}a_j\div u_j\,dx.
\end{align*}
Based on \eqref{diss-u1} and \eqref{diss-a1}, for $j\geq J_1-1$, it holds that
\begin{equation}\label{diss-au}
\begin{aligned}
&\frac12\frac d{dt}\mathcal{L}_j^2(t)
+(1-C_3\|a\|_{L^\infty}-\tilde cC_4)\|\Lambda u_j\|_{L^2}^2
+\tilde c\|\Lambda^{s_*} a_j\|_{L^{2}}^2
\\&\quad\lesssim\|(\nabla a,\nabla u,\partial_ta)\|_{L^\infty}
\|(\Lambda^{s_*}a_j,\Lambda u_j)\|_{L^2}^2
\\&\quad\quad+\|(\Lambda^{s_*}R_{j}^{1},\Lambda^{s_*}R_{j}^{2},\Lambda R_j^3)\|_{L^2}
\|(\Lambda^{s_*}a_j,\Lambda u_j)\|_{L^2},
\end{aligned}
\end{equation}
where $\tilde{c}>0$ is suitably small and will be determined later.
 We claim that
\begin{align*}
\mathcal{L}_j^2(t)\sim
\|(\Lambda^{s_*}a_j,\Lambda u_j)\|_{L^2}^2,\quad \text{and}\quad 1-C_3\|a\|_{L^\infty}-\tilde cC_4\geq \frac{1}{2}.
\end{align*}
In fact, by H$\rm\ddot{o}$lder’s inequality and \eqref{a-priori-supose}, we can easily obtain		
$$\left|\int_{\R^{d}}\dot  S_{j-1}a
(\Lambda u_j)^2\,dx\right|
\leq C_5\|a\|_{L^\infty} \|\Lambda u_j\|_{L^2}^2\leq C_5\var_0 \|\Lambda u_j\|_{L^2}^2.$$
 It follows from Bernstein's inequality that
\begin{equation*}
\begin{aligned}
\left|2\tilde c\int_{\R^{d}}a_j\div u_j\,dx\right|
\leq \tilde{c}C_6\|(\Lambda^{s_*}a_j,\Lambda u_j)\|_{L^2}^2,
\end{aligned}
\end{equation*}
which implies that
$$(1-\tilde{c}C_6-\var_0C_5)\|(\Lambda^{s_*}a_j,\Lambda u_j)\|_{L^2}^2
\leq\mathcal{L}_j^2(t)
\leq(1+\tilde{c}C_6+\var_0C_5)\|(\Lambda^{s_*}a_j,\Lambda u_j)\|_{L^2}^2
$$
by choosing a suitably small $\tilde{c}>0$ such that
$$\tilde{c}\triangleq\min\{\frac{1}{4C_4},\frac{1}{4C_6}\},$$ 
and $$\|a\|_{L^\infty}\leq\var_0\triangleq\min\{\frac{1}{4C_3},\frac{1}{4C_5}\}.$$

Then, \eqref{diss-au} be written as
\begin{equation}\label{lyap}
\begin{aligned}
\frac d{dt} \mathcal{L}_j^2(t)
+\mathcal{L}_j^2(t)
&\lesssim\|(\nabla a,\nabla u,\partial_ta)\|_{L^\infty}\mathcal{L}_j^2(t)
\\&\quad+\|(\Lambda^{s_*}R_{j}^{1},\Lambda^{s_*}R_{j}^{2},\Lambda R_j^3)\|_{L^2}\mathcal{L}_j(t).
\end{aligned}
\end{equation}
Using Lemma \ref{difference-delta}, multiplying both sides of it by $2^{j(\frac d2+1-s_*)}$ and summing over $j\geq J_1-1$, we verify that
\begin{equation}\label{high-glo}
\begin{aligned}
&\|a\|_{\tilde L_t^\infty(\dot B^{\frac d2+1}_{2,1})}^{h}
+\|u\|_{\tilde L_t^\infty(\dot B^{\frac d2+2-s_*}_{2,1})}^{h}
+\|a\|_{\tilde L_t^1(\dot B^{\frac d2+1}_{2,1})}^{h}
+\|u\|_{\tilde L_t^1(\dot B^{\frac d2+2-s_*}_{2,1})}^{h}
\\&\quad\lesssim\|a_0\|_{\dot B^{\frac d2+1}_{2,1}}^{h}
+\|u_0\|_{\dot B^{\frac d2+2-s_*}_{2,1}}^{h}
\\&\quad\quad+(\|\nabla a\|_{\tilde L_t^\infty(\dot B^{\frac dp}_{p,1})}
+\|\nabla u\|_{\tilde L_t^\infty(\dot B^{\frac dp}_{p,1})})
\cdot(\|a\|_{\tilde L_t^1(\dot B^{\frac d2+1}_{2,1})}^{h}
+\|u\|_{\tilde L_t^1(\dot B^{\frac d2+2-s_*}_{2,1})}^{h})
\\&\quad\quad+\|\partial_ta\|_{\tilde L_t^1(\dot B^{\frac dp}_{p,1})}
(\|a\|_{\tilde L_t^\infty(\dot B^{\frac d2+1}_{2,1})}^{h}
+\|u\|_{\tilde L_t^\infty
(\dot B^{\frac d2+2-s_*}_{2,1})}^{h})
\\&\quad\quad+\sum_{j\geq J_{1}-1}2^{j(\frac{d}{2}+1)}
\|(R_{j}^{1},R_{j}^{2})\|_{L^1(L^2)}
+\sum_{j\geq J_{1}-1}2^{j(\frac d2+2-s_*)}
\|R_j^3\|_{L^1(L^2)}.
\end{aligned}
\end{equation}
Similarly to \eqref{au-dp}, one has
\begin{equation}\label{au-dp1}
\begin{aligned}
\|(\nabla a,\nabla u)\|_{\tilde L_t^\infty(\dot B^{\frac dp}_{p,1})}
&\lesssim\|a\|_{\tilde L_t^\infty(\dot B^{\frac dp-1}_{p,1})}^{\ell}
+\|u\|_{\tilde L_t^\infty(\dot B^\frac dp_{p,1})}^{\ell}
\\&\quad+\|a\|_{\tilde L_t^\infty(\dot B^{\frac d2+1}_{2,1})}^h
+\|u\|_{\tilde L_t^\infty(\dot B^{\frac d2+2-s_*}_{2,1})}^h.
\end{aligned}
\end{equation}
We now turn to handle the commutator terms.
Remembering $2\leq p\leq 4$ and  $s_*<1$ and taking $\eta_1=\frac d2+2-s_*$,
$k=0$ and $\eta_2=\frac dp+2s_*-2$ in Lemma \ref{barat-commutator-estimates}, we obtain
\begin{equation}\label{r1}
\begin{aligned}
\sum_{j\geq J_{1}-1}2^{j(\frac{d}{2}+1)}
\|R_{j}^{1}\|_{L^1(L^2)}
&\lesssim\|\nabla u\|_{\tilde L_t^1(\dot B^{\frac dp}_{p,1})}
\|\nabla a\|_{\tilde L_t^\infty(\dot B^{\frac d2}_{2,1})}^{h}
\\&\quad+2^{J_1(s_*-1)}
\|\nabla a\|_{\tilde L_t^\infty(\dot B^{\frac dp-\frac {d}{p^{\ast}}}_{p,1})}^{\ell}
\|u\|_{\tilde L_t^1(\dot B^{\frac d2+2-s_*}_{p,1})}^{\ell}
\\
&\quad+\|\nabla a\|_{\tilde L_t^\infty(\dot B^{\frac dp}_{p,1})}
\|u\|_{\tilde L_t^1(\dot B^{\frac d2+1}_{2,1})}^{h}
\\&\quad+2^{J_1(\frac{d}{p^\ast}+2-2s_*)}
\|\nabla a\|_{\tilde L_t^1(\dot B^{\frac dp+2s_*-2}_{p,1})}^{\ell}
\|\nabla u\|_{\tilde L_t^\infty(\dot B^{\frac dp-\frac {d}{p^{\ast}}}_{p,1})}^{\ell}.
\end{aligned}
\end{equation}
Here $p_*:=\frac{2p}{p-2}$. By \eqref{HLEst} and $\frac{d}{p_*}\leq 1$ due to \eqref{p}, one can arrive at
\begin{align*}
2^{J_1(s_*-1)}
\|\nabla a\|_{\tilde L_t^\infty(\dot B^{\frac dp-\frac {d}{p^{\ast}}}_{p,1})}^{\ell}
\|u\|_{\tilde L_t^1(\dot B^{\frac d2+2-s_*}_{p,1})}^{\ell}
\lesssim
\| a\|_{\tilde L_t^\infty(\dot B^{\frac dp-1}_{p,1})}^{\ell}
\|u\|_{\tilde L_t^1(\dot B^{\frac dp}_{p,1})}^{\ell}
\end{align*}
and
\begin{align*}
&2^{J_1(\frac{d}{p^\ast}+2-2s_*)}
\|\nabla a\|_{\tilde L_t^1(\dot B^{\frac dp+2s_*-2}_{p,1})}^{\ell}
\|\nabla u\|_{\tilde L_t^\infty(\dot B^{\frac dp-\frac {d}{p^{\ast}}}_{p,1})}^{\ell}
\lesssim
\|a\|_{\tilde L_t^1(\dot B^{\frac dp+2s_*-1}_{p,1})}^{\ell}
\|u\|_{\tilde L_t^\infty(\dot B^{\frac dp}_{p,1})}^{\ell}.
\end{align*}
Thus, recalling \eqref{au-lip} and \eqref{au-dp1}, we get
\begin{equation}\label{R-1}
\sum_{j\geq J_{1}-1}2^{j(\frac{d}{2}+1)}
\|R_{j}^{1}\|_{L^1(L^2)}
\lesssim \cE_p(t)\cD_p(t).
\end{equation}
Similar computations yield
\begin{equation}\label{R2-3}
\begin{aligned}
\sum_{j\geq J_{1}-1}2^{j(\frac{d}{2}+1)}
\|R_{j}^{2}\|_{L^1(L^2)}
+\sum_{j\geq J_{1}-1}2^{j(\frac d2+2-s_*)}
\|R_j^3\|_{L^1(L^2)}
\lesssim \cE_p(t)\cD_p(t).
\end{aligned}
\end{equation}
Inserting \eqref{au-dp1}, \eqref{R-1} and \eqref{R2-3} into \eqref{high-glo},
we conclude that
\begin{equation*}
\begin{aligned}
&\|a\|_{\tilde L_t^\infty(\dot B^{\frac d2+1}_{2,1})}^{h}
+\|u\|_{\tilde L_t^\infty(\dot B^{\frac d2+2-s_*}_{2,1})}^{h}
+\|a\|_{\tilde L_t^1(\dot B^{\frac d2+1}_{2,1})}^{h}
+\|u\|_{\tilde L_t^1(\dot B^{\frac d2+2-s_*}_{2,1})}^{h}
\\&\quad\lesssim \cE_{p,0}+\cE_p(t)\cD_p(t).
\end{aligned}
\end{equation*}
The term $\|\partial_ta\|_{\tilde L_t^1(\dot B^{\frac dp}_{p,1})}^h$ can be bounded using the same procedures as \eqref{partial-a-1}, 
 so we get
\begin{equation*}
\begin{aligned}
\|\partial_ta\|_{\tilde L_t^1(\dot B^{\frac dp}_{p,1})}^h
\lesssim\cE_{p,0}+\cE_p(t)\cD_p(t).
\end{aligned}
\end{equation*}
Collecting the above estimates, we get \eqref{high-global}.

\vspace{3mm}

Combining \eqref{low-global} and \eqref{high-global}, we obtain
\begin{equation*}
\begin{aligned}
\cE_p(t)+\cD_p(t)
\leq C_0\big(
\cE_{p,0}+\cE_p(t)\cD_p(t)
\big).
\end{aligned}
\end{equation*}
This completes the proof of Proposition \ref{priori-global}.
\end{proof}

\subsection{Proof of global existence and uniqueness}

Before proving Theorem \ref{global}, we state the existence and uniqueness of local-in-time
solutions for the Cauchy problem \eqref{E-Rli} (see another paper \cite{csx} for details).
\begin{theorem}\label{local}
\rm(Local well-posedness\rm) 
Let $d\geq1$, $p$ satisfy \eqref{p} and $s_*\triangleq\frac{\alpha-d+2}{2}\in (0,1)$.
Assume $a_0\in\dot B^{\frac dp-1,\frac{d}{2}+1}_{p,2}$, $u_0\in
\dot B^{\frac dp,\frac{d}{2}+2-s_*}_{p,2}$ and that $1+a_0$ is away from $0$. Then, there exists a time $T>0$ such that the Cauchy problem \eqref{E-Rli} admits a unique strong solution $(a,u)$ satisfying
\begin{equation}\label{local-space}
\left\{
\begin{aligned}
& \inf_{(t,x)\in [0,T)\times\mathbb{R}^d}(1+a)>0,\\
&a \in \cC([0,T); \dot B^{\frac dp-1,\frac{d}{2}+1}_{p,2}),\quad 
u\in \cC([0,T); \dot B^{\frac{d}{p},\frac{d}{2}+2-s_*}_{p,2}).
\end{aligned}
\right.
\end{equation}
\end{theorem}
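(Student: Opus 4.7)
The plan is to prove Theorem \ref{local} by a standard iteration-compactness scheme, recycling the linear analysis already developed for Proposition \ref{priori-global} but localised to a short time interval $[0,T]$. Concretely, I would construct approximate solutions $(a^n, u^n)_{n\geq 0}$ via Picard iteration on the linearised Cauchy problem
\begin{equation*}
\left\{\begin{array}{l}
\partial_t a^{n+1} + \div u^{n+1} = -\div(a^n u^n), \\
\partial_t u^{n+1} + u^{n+1} + \nabla\Lambda^{2s_*-2} a^{n+1} = -u^n \cdot \nabla u^n,
\end{array}\right.
\end{equation*}
starting from $(a^0, u^0) = (0,0)$ and with the common initial datum $(a_0, u_0)$. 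At each step the linear system has exactly the principal part of \eqref{E-R-lin}, so it can be solved in the hybrid Besov framework by combining the effective-unknown low-frequency analysis of Lemma \ref{lemma-low-global} and the symmetrised/hypocoercive high-frequency analysis of Lemma \ref{lemma-high-global}, both now applied to a prescribed source term.

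The short-time closure then proceeds by absorbing the nonlinear products on the right-hand side. Reusing the arguments of Proposition \ref{priori-global} restricted to $[0,T]$ yields, for the iterates,
\begin{equation*}
\cE_p^{n+1}(T) + \cD_p^{n+1}(T) \leq C_0\cE_{p,0} + \eta(T) + C_0\,\cE_p^{n}(T)\,\cD_p^{n}(T),
\end{equation*}
where $\cE_p^n$, $\cD_p^n$ denote the analogues of the functionals in Proposition \ref{priori-global} evaluated on $(a^n, u^n)$, and $\eta(T) \to 0$ as $T \to 0$ collects the $L^1_T$ pieces coming from the prescribed right-hand side on a bounded interval. Choosing $T$ so small that the right-hand side is dominated by some $M \sim \cE_{p,0}$ gives a ball invariant under the iteration map. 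A difference estimate at the same level produces contraction in a space of one derivative less, e.g.\ $\cC([0,T]; \dot B^{d/p-2,d/2}_{p,2}\times \dot B^{d/p-1,d/2+1-s_*}_{p,2})$; the limit $(a,u)$ is then promoted to the original critical hybrid spaces \eqref{local-space} by a Fatou-type argument and by reading off time continuity from the equation. The lower bound on $1+a$ on $[0,T)$ follows from the transport identity $\partial_t(1+a) + u\cdot\nabla(1+a) = -(1+a)\div u$ using the Lipschitz control of $u$ furnished by the embedding $\dot B^{d/p,\,d/2+2-s_*}_{p,2} \hookrightarrow \dot W^{1,\infty}$. Uniqueness is obtained by applying the same kind of difference estimate to two hypothetical solutions and closing with Gronwall's lemma.

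The main obstacle is the non-local coupling $\nabla \Lambda^{2s_*-2} a$: its order $2s_*-1$ may be positive, so it transmits regularity asymmetrically from $a$ to $u$, which is precisely why the high-frequency datum for $u$ is assumed at regularity $\dot B^{d/2+2-s_*}_{2,1}$, exactly $1-s_*$ derivatives higher than that for $a$. Preserving this fractional regularity gap through the iteration, and checking that the commutator estimates of Lemma \ref{barat-commutator-estimates} still close without loss when applied to the iterated nonlinearities $\div(a^n u^n)$ and $u^n\cdot\nabla u^n$, is the delicate point; once it is handled, the full linear machinery of Section \ref{global-proof} transfers essentially verbatim and the local theorem follows.
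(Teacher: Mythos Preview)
The paper does not prove Theorem \ref{local}: it merely states the result and defers all details to the companion preprint \cite{csx}. So there is no proof in the present paper for you to be compared against, and your outline is a perfectly reasonable way to fill the gap.

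One caution on your plan to ``recycle'' Lemmas \ref{lemma-low-global} and \ref{lemma-high-global}. Those lemmas are proved under the standing smallness hypothesis \eqref{a-priori-supose}, i.e.\ $\|a\|_{L^\infty_t(L^\infty)}\leq\varepsilon_0$, which is what makes the Lyapunov functional $\mathcal L_j^2(t)$ equivalent to $\|(\Lambda^{s_*}a_j,\Lambda u_j)\|_{L^2}^2$ and keeps the coefficient $1-C_3\|a\|_{L^\infty}-\tilde c C_4$ positive. Theorem \ref{local} makes no such smallness assumption on $a_0$; only $1+a_0$ bounded away from zero is required. Hence you cannot transplant the high-frequency argument verbatim. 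The fix is standard: observe that the second and third terms of $\mathcal L_j^2$ combine to $\int(1+\dot S_{j-1}a)|\Lambda u_j|^2\,dx$, and use the no-vacuum assumption $1+a\geq c>0$ (rather than $\|a\|_{L^\infty}$ small) to obtain the two-sided equivalence; the sign of the dissipative coefficient is irrelevant for local existence since any loss can be absorbed by Gronwall on $[0,T]$. With that adjustment, your iteration/contraction/Fatou scheme and the transport argument for the density lower bound are correct.
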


\vspace{2mm}

\noindent
\textbf{Proof of Theorem \ref{global}}. Theorem \ref{local} ensures that there
exists a maximal existence time $T_0$ such that the Cauchy problem \eqref{E-Rli} has a unique solution $(a,u)$ satisfying \eqref{local-space}.
Let
\begin{equation*}
\begin{aligned}
\cY_p(t)&\triangleq
\|a(t)\|_{\dot B^{\frac dp-1}_{p,1}}^{\ell}
+\|u(t)\|_{\dot B^\frac dp_{p,1}}^{\ell}
+\|a(t)\|_{\dot B^{\frac d2+1}_{2,1}}^h
+\|u(t)\|_{\dot B^{\frac d2+2-s_*}_{2,1}}^h
+\cD_p(t)
\end{aligned}
\end{equation*}
with $\cD_p(t)$ given by \eqref{cDpt}. 
Define
$$T_\ast\triangleq\sup\{t\in[0,T_0)\,\,|\,\,\cY_p(t)\leq2C_0\cE_{p,0}\},$$
where $C_0$ is given by Proposition \ref{priori-global}.

We first claim $T_\ast=T_0$.
Indeed, if $T_\ast<T_0,$ then due to the embedding 
$\dot{B}_{p,1}^\frac dp\hookrightarrow L^\infty$,
one has 
$$\|a\|_{L^\infty_t(L^\infty)}\leq
C(\|a\|_{L_t^\infty(\dot{B}_{p,1}^{\frac dp-1})}^\ell
+\|a\|_{L_t^\infty(\dot{B}_{2,1}^{\frac d2+1})}^h)
\leq2CC_0\cE_{p,0},
\,\,0<t<T_\ast.$$
Thus, by a  priori estimate \eqref{priori-estimate} established in Proposition \ref{priori-global},
for $0<t<T_\ast$,
we have
\begin{equation*}
\begin{aligned}
&\cE_p(t)+\cD_p(t)
\leq C_0\big(
\cE_{p,0}+\cE_p(t)2C_0\cE_{p,0}
\big).
\end{aligned}
\end{equation*}
By letting
$$\cE_{p,0}\leq \delta_0\triangleq\min\{\frac{\var_0}{2CC_0},
\frac1{6C_0^2}\},$$
we get
\begin{equation*}
\begin{aligned}
\cE_p(t)+\cD_p(t)
\leq\frac32C_0\cE_{p,0}.
\end{aligned}
\end{equation*}
In this case, the time continuity of $\cY_p(t)$ implies that
$$\cY_p(T_\ast)\leq\frac32C_0\cE_{p,0},$$
which contradicts the definition of $T_\ast$.
Thus, $T_\ast=T_0$ follows.

Finally, we prove  $T_0=+\infty,$ and therefore
$(a,u)$ is a global solution to the Cauchy problem \eqref{E-Rli} satisfying
$$\cE_p(t)+\cD_p(t)\leq2C_0\cE_{p,0},\quad \text{for all}\quad
t>0.$$
In fact, if $T_0<+\infty,$ then one can take 
$(a,u)(t_0,x)$ with $t_0$ sufficiently
close to $T_0$ as a new initial datum and extend a 
solution beyond $T_0$ due to Theorem \ref{local}. 
This contradicts the definition of $T_0$.
The proof of Theorem \ref{global} is thus finished.
%
\section{Proof of Theorem \ref{decay-a-u}}\label{decay-proof}
This section we focus on the decay estimates for the pressureless Euler–Riesz system.
\subsection{The evolution of the low-frequency Besov norm}
In this section, inspired by Guo \& Wang \cite{guo-wang} and Xin \& Xu \cite{xin-xu}, we establish the evolution of  the $\dot B_{p,\infty}^{\sigma_1}$-norm for low frequencies in the $L^p$ framework, 
which is the main ingredient to derive the decay estimates.
\begin{proposition}\label{negative besov bounded}
 Let $(a,u)$ be the global solution to the Cauchy problem \eqref{E-Rli} obtained in Theorem \ref{global}.
 If in addition to \eqref{x-p-0}, suppose further 
$a_0^\ell\in \dot B^{\sigma_1}_{p,\infty}$ and $u_0^\ell\in
\dot B^{\sigma_1+1}_{p,\infty}$ with $-\frac dp-1\leq\sigma_1<\frac dp-1$ such that $\cX_{p,0}$ given by \eqref{Xp0}
 is bounded.
	
Then, it holds that
\begin{equation}\label{negtative besov}
\begin{aligned}
&\|a^\ell\|_{\tilde L^\infty
(\R_+;\dot B^{\sigma_1}_{p,\infty})}
+\|u^\ell\|_{\tilde L^\infty
(\R_+;\dot B^{\sigma_1+1}_{p,\infty})}
+\|a^\ell\|_{\tilde L^1
(\R_+;\dot B^{\sigma_1+2s_*}_{p,\infty})}
\\&\quad+\|u^\ell\|_{\tilde L^1
(\R_+;\dot B^{\sigma_1+1}_{p,\infty})}
\lesssim \cX_{p,0}.
\end{aligned}
\end{equation}
\end{proposition}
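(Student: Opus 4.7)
The plan is to mimic the low-frequency analysis of Lemma \ref{lemma-low-global}, but at the weaker regularity level $\sigma_1$ and with the $\ell^\infty$-based Besov norm $\dot B^{\sigma_1}_{p,\infty}$ in place of $\dot B^{d/p-1}_{p,1}$. The a priori bounds already granted by Theorem \ref{global} will play the role of ``smallness coefficients'' in front of all nonlinear contributions, so the scheme will ultimately close by absorption.

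First, I would reintroduce the effective unknown $z\triangleq u+\nabla\Lambda^{\alpha-d}a$, so that \eqref{low-a} and \eqref{low-z} still hold. Applying $\dot\Delta_j$ for $j\le J_{1}$, multiplying the localized $a$-equation by $|a_j|^{p-2}a_j$ and the localized $z$-equation by $|z_j|^{p-2}z_j$, integrating in space and using Lemma \ref{low-fra-Lp} together with Lemma \ref{difference-delta}, I obtain the same type of block inequalities as in the proof of Lemma \ref{lemma-low-global}, namely
\begin{equation*}
\|a_j\|_{L^\infty_t(L^p)}+2^{j2s_*}\|a_j\|_{L^1_t(L^p)}
\lesssim \|(a_0)_j\|_{L^p}+\|\div z_j\|_{L^1_t(L^p)}+\|\div(au)_j\|_{L^1_t(L^p)},
\end{equation*}
and an analogous one for $z_j$. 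The crucial change is that I now weight by $2^{j\sigma_1}$ (resp.~$2^{j(\sigma_1+1)}$) and take the supremum over $j\le J_{1}$ instead of summing; this produces $\dot B^{\sigma_1}_{p,\infty}$ (resp.~$\dot B^{\sigma_1+1}_{p,\infty}$) norms on the left. The higher-order linear contributions $\| z\|_{\tilde L^1_t(\dot B^{\sigma_1+1+2s_*}_{p,\infty})}^{\ell}$ and $\|a\|_{\tilde L^1_t(\dot B^{\sigma_1+4s_*}_{p,\infty})}^{\ell}$ in the $z$-estimate are handled exactly as before: by \eqref{HLEst} and $s_*>0$, they come with a factor $2^{J_{1}2s_*}$ and can be absorbed provided $J_{1}$ is the same small integer chosen in \eqref{J_1}. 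This yields a linear bound
\begin{equation*}
\begin{aligned}
&\|a\|_{\tilde L_t^\infty(\dot B^{\sigma_1}_{p,\infty})}^{\ell}
+\|a\|_{\tilde L_t^1(\dot B^{\sigma_1+2s_*}_{p,\infty})}^{\ell}
+\|z\|_{\tilde L_t^\infty(\dot B^{\sigma_1+1}_{p,\infty})}^{\ell}
+\|z\|_{\tilde L_t^1(\dot B^{\sigma_1+1}_{p,\infty})}^{\ell}\\
&\qquad\lesssim \cX_{p,0}+\|au\|_{\tilde L_t^1(\dot B^{\sigma_1+1}_{p,\infty})}^{\ell}
+\|u\cdot\nabla u\|_{\tilde L_t^1(\dot B^{\sigma_1+1}_{p,\infty})}^{\ell},
\end{aligned}
\end{equation*}
after using the definition of $z_0$ and $\|\Lambda^{2s_*-1}a_0\|_{\dot B^{\sigma_1+1}_{p,\infty}}^\ell\lesssim \|a_0\|_{\dot B^{\sigma_1}_{p,\infty}}^\ell$ via \eqref{HLEst}.

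The heart of the argument is therefore to estimate the two nonlinear terms in $\dot B^{\sigma_1+1}_{p,\infty}$. The hypothesis $-\tfrac dp-1\le\sigma_1<\tfrac dp-1$ translates precisely into $|\sigma_1+1|\le d/p$, which is exactly the range in which the product law (Lemma \ref{ClassicalProductLawEst1} or its $r=\infty$ variant) gives
\begin{equation*}
\|fg\|_{\dot B^{\sigma_1+1}_{p,\infty}}\lesssim \|f\|_{\dot B^{d/p}_{p,1}}\|g\|_{\dot B^{\sigma_1+1}_{p,\infty}}+\|g\|_{\dot B^{d/p}_{p,1}}\|f\|_{\dot B^{\sigma_1+1}_{p,\infty}}.
\end{equation*}
Applied with $(f,g)=(a,u)$, and with $(u,\nabla u)$ using $\|u\cdot\nabla u\|_{\dot B^{\sigma_1+1}_{p,\infty}}\lesssim \|u\|_{\dot B^{d/p}_{p,1}}\|\nabla u\|_{\dot B^{\sigma_1+1}_{p,\infty}}$ (after recasting one factor via Bony decomposition when $\sigma_1+1$ is close to $-d/p$), I can time-integrate and place the high-norm factor $\|a\|_{L^\infty_t(\dot B^{d/p}_{p,1})}+\|u\|_{L^\infty_t(\dot B^{d/p}_{p,1})}$ together with $\|u\|_{L^1_t(\dot B^{d/p}_{p,1})}$ on the side controlled by Theorem \ref{global}, and leave the target $\dot B^{\sigma_1+1}_{p,\infty}$ norms (split as low/high via \eqref{HLEst} and the high-frequency bounds contained in $\cX_{p,0}$) on the right. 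Invoking smallness of $\cE_{p,0}$, each nonlinear contribution appears multiplied by $C\cE_{p,0}$, allowing absorption into the left-hand side. Recovering $u^\ell$ from $z^\ell$ and $a^\ell$ via $u=z-\nabla\Lambda^{2s_*-2}a$, together with \eqref{HLEst}, then yields \eqref{negtative besov}.

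The main obstacle is the nonlinear step when $\sigma_1$ is near its lower endpoint $-d/p-1$: the product $u\cdot\nabla u$ has to be estimated in a Besov space of essentially negative regularity, and a naive paraproduct bound loses either a derivative or a range condition. The resolution is to rewrite $u\cdot\nabla u$ so that only one derivative falls outside, use a paraproduct decomposition where the low-frequency factor carries the $\dot B^{d/p}_{p,1}$ norm (which is integrable in time by Theorem \ref{global}) and the other carries the target $\dot B^{\sigma_1+1}_{p,\infty}$ norm (and to treat its high-frequency part separately through the hybrid $L^2$ high-frequency bound already in $\cX_{p,0}$). Once this is done, the scheme closes by bootstrap and one concludes the desired bound by the smallness of $\cE_{p,0}$ from Theorem \ref{global}.
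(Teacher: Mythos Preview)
Your proposal is correct and follows essentially the same approach as the paper: introduce the effective unknown $z$, repeat the low-frequency block estimates of Lemma~\ref{lemma-low-global} with weight $2^{j\sigma_1}$ and an $\ell^\infty$ supremum in $j$, absorb the higher-order linear terms via the same choice of $J_{1}$ in \eqref{J_1}, estimate the bilinear terms by the product law in $\dot B^{\sigma_1+1}_{p,\infty}$ (the paper invokes Lemma~\ref{negative product estimate} directly, which already covers the endpoint $\sigma_1+1=-d/p$ you were worried about), and close by the smallness of $\cE_{p,0}$ before recovering $u^\ell$ from $z^\ell$ and $a^\ell$. The only cosmetic difference is that the paper places the derivative in $u\cdot\nabla u$ on the $\dot B^{d/p}_{p,1}$ factor (writing $\|u\|_{\dot B^{d/p+1}_{p,1}}$) rather than on the $\dot B^{\sigma_1+1}_{p,\infty}$ factor as you do, but both distributions work after splitting low/high frequencies.
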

\begin{proof}
Applying $\dot\Delta_j\dot S_{J_1}$ to $\eqref{low-a}$, 
we obtain
\begin{equation*}
\begin{array}{l}
\partial_ta_j^\ell
+\Lambda^{2s_*}a_j^\ell
=-\div z_j^\ell -\div(a u)_j^\ell.
\end{array}
\end{equation*}
Arguing similarly to \eqref{lowa}, one has
\begin{equation*}
\begin{aligned}
&\|a^\ell\|_{\tilde L^\infty
(\R_+;\dot B^{\sigma_1}_{p,\infty})}
+\|a^{\ell}\|_{\tilde L^1
(\R_+;\dot B^{\sigma_1+2s_*}_{p,\infty})}
\\&\quad\leq C_1\big(\|a_{0}^{\ell}\|_{
\dot B^{\sigma_1}_{p,\infty}}
+\|z^{\ell}\|_{\tilde L^1
(\R_+;\dot B^{\sigma_1+1}_{p,\infty})}
+\|(a u)^{\ell}\|_{\tilde L^1
(\R_+;\dot B^{\sigma_1+1}_{p,\infty})}
\big).
\end{aligned}
\end{equation*}
Recall the effective velocity $z\triangleq u+\nabla\Lambda^{2s_*-2}a.$ Applying $\dot\Delta_j\dot S_{J_1}$ to \eqref{low-z},
we obtain
\begin{align*}
\partial_t z_j^\ell+ z_j^\ell=-\nabla\Lambda^{2s_*-2}\div z_j^\ell
-\nabla\Lambda^{4s_*-2}a_j^\ell-\nabla\Lambda^{2s_*-2}\div(au)_j^\ell
-(u\cdot\nabla u)_j^\ell.
\end{align*}
Similarly, we arrive at
\begin{equation*}\begin{aligned}
&\|z^\ell\|_{\tilde L^\infty
(\R_+;\dot B^{\sigma_1+1}_{p,\infty})}
+\|z^\ell\|_{\tilde L^1
(\R_+;\dot B^{\sigma_1+1}_{p,\infty})}
\\&\quad\leq C_2\big(\|z_0^\ell\|_{
\dot B^{\sigma_1+1}_{p,\infty}}
+2^{J_12s_*}\|z^\ell\|_{\tilde L^1
(\R_+;\dot B^{\sigma_1+1}_{p,\infty})}
+2^{J_12s_*}\|a^\ell\|_{\tilde L^1
(\R_+;\dot B^{\sigma_1+2s_*}_{p,\infty})}
\\&\quad\quad+\|(au)^\ell\|_{\tilde L^1
(\R_+;\dot B^{\sigma_1+2s_*+1}_{p,\infty})}
+\|(u\cdot\nabla u)^\ell\|_{\tilde L^1
(\R_+;\dot B^{\sigma_1+1}_{p,\infty})}\big).
\end{aligned}
\end{equation*}
Here the constants $C_1$ and $C_2$ are exactly the same as those in the estimates $\eqref{lowa}$ and $\eqref{lowz1}$.
Then, as $J_1$ is given by \eqref{J_1}, it holds that
\begin{equation}\label{neg-nonlin}
\begin{aligned}
&\|a^\ell\|_{\tilde L^\infty
(\R_+;\dot B^{\sigma_1}_{p,\infty})}
+\|z^\ell\|_{\tilde L^\infty
(\R_+;\dot B^{\sigma_1+1}_{p,\infty})}\\
&\quad\quad+\|a^\ell\|_{\tilde L^1
(\R_+;\dot B^{\sigma_1+2s_*}_{p,\infty})}
+\|z^\ell\|_{\tilde L^1
(\R_+;\dot B^{\sigma_1+1}_{p,\infty})}
\\&\quad\lesssim\|a_{0}^\ell\|_{
\dot B^{\sigma_1}_{p,\infty}}
+\|z_{0}^\ell\|_{
\dot B^{\sigma_1+1}_{p,\infty}}\\
&\quad\quad+\|(a u)^\ell\|_{\tilde L^1
(\R_+;\dot B^{\sigma_1+1}_{p,\infty})}
+\|(u\cdot\nabla u)^\ell\|_{\tilde L^1
(\R_+;\dot B^{\sigma_1+1}_{p,\infty})}.
\end{aligned}
\end{equation}
Let $-\frac dp-1\leq\sigma_1<\frac dp-1$ and $p\geq2$. 
According to Lemma \ref{negative product estimate} and \eqref{HLEst}, we arrive at
\begin{align*}
\|a u\|_{\tilde L^1
(\R_+;\dot B^{\sigma_1+1}_{p,\infty})}
\lesssim\|a\|_{\tilde L^\infty
(\R_+;\dot B^{\frac dp}_{p,1})}
\|u\|_{\tilde L^1
(\R_+;\dot B^{\sigma_1+1}_{p,\infty})}.
\end{align*}
Taking advantage of the embedding properties in Lemma \ref{embedding}, the definition of $z$ and \eqref{HLEst}, we have
\begin{equation*}
\begin{aligned}
\|u\|_{\tilde L^1
(\R_+;\dot{B}^{\sigma_1+1}_{p,\infty})}
&\lesssim\|z^\ell\|_{\tilde L^1
(\R_+;\dot{B}^{\sigma_1+1}_{p,\infty})}
+\|\nabla\Lambda^{2s_*-2}a^\ell\|_{\tilde L^1
(\R_+;\dot{B}^{\sigma_1+1}_{p,\infty})}
+\|u\|_{\tilde L^1
(\R_+;\dot{B}^{\sigma_1+1}_{p,\infty})}^h
\\&\lesssim\|z^\ell\|_{\tilde L^1
(\R_+;\dot{B}^{\sigma_1+1}_{p,\infty})}
+\|a^\ell\|_{\tilde L^1
(\R_+;\dot{B}^{\sigma_1+2s_*}_{p,\infty})}
+\|u\|_{\tilde L^1
(\R_+;\dot{B}^{\frac d2+2-s_*}_{2,1})}^h.
\end{aligned}
\end{equation*}
Therefore, thanks to \eqref{global-Var0} and \eqref{au-dp}, we obtain
\begin{equation}\label{au-sigma1-1}
\begin{aligned}
\|a u\|_{\tilde L^1
(\R_+;\dot B^{\sigma_1+1}_{p,\infty})}
\lesssim\cE_{p,0}\big(\|a^\ell\|_{\tilde L^1
(\R_+;\dot{B}^{\sigma_1+2s_*}_{p,\infty})}
+\|z^\ell\|_{\tilde L^1
(\R_+;\dot{B}^{\sigma_1+1}_{p,\infty})}\big)
+\cE_{p,0}^2.
\end{aligned}
\end{equation}
 Similarly, it is clear that
\begin{equation}\label{u-nabla-u-neg}
\begin{aligned}
\|u\cdot\nabla u\|_{\tilde L^1
(\R_+;\dot B^{\sigma_1+1}_{p,\infty})}
&\lesssim\|u\|_{\tilde L^1
(\R_+;\dot B^{\sigma_1+1}_{p,\infty})}
\|u\|_{\tilde L^\infty
(\R_+;\dot B^{\frac dp+1}_{p,1})}
\\&\lesssim\cE_{p,0}\big(\|a^\ell\|_{\tilde L^1
(\R_+;\dot{B}^{\sigma_1+2s_*}_{p,\infty})}
+\|z^\ell\|_{\tilde L^1
(\R_+;\dot{B}^{\sigma_1+1}_{p,\infty})}\big)
+\cE_{p,0}^2.
\end{aligned}
\end{equation}
Inserting \eqref{au-sigma1-1} and \eqref{u-nabla-u-neg} into \eqref{neg-nonlin}, 
Due to the smallness of $\cE_{p,0}$, it holds that
\begin{equation*}
\begin{aligned}
&\|a^\ell\|_{\tilde L^\infty
(\R_+;\dot B^{\sigma_1}_{p,\infty})}
+\|z^\ell\|_{\tilde L^\infty
(\R_+;\dot B^{\sigma_1+1}_{p,\infty})}
+\|a^\ell\|_{\tilde L^1
(\R_+;\dot B^{\sigma_1+2s_*}_{p,\infty})}
+\|z^\ell\|_{\tilde L^1
(\R_+;\dot B^{\sigma_1+1}_{p,\infty})}
\\&\quad\lesssim\|a_{0}^\ell\|_{
\dot B^{\sigma_1}_{p,\infty}}
+\|z^\ell_0\|_{\tilde L^\infty
(\R_+;\dot B^{\sigma_1+1}_{p,\infty})}
+\cE_{p,0}.
\end{aligned}
\end{equation*}
 By the low-high-frequency decomposition, Berstein's inequality, \eqref{HLEst} and \eqref{low-1-infty}, we have
\begin{align*}
&\cE_{p,0}
\lesssim\|a_{0}\|_{\dot B^{\sigma_1}_{p,\infty}}^\ell
+\|u_{0}\|_{\dot B^{\sigma_1+1}_{p,\infty}}^\ell
+\|a_0\|_{\dot B^{\frac d2+1}_{2,1}}^{h}
+\|u_0\|_{\dot B^{\frac{d}{2}+2-s_*}_{2,1}}^{h}
\\&\quad\lesssim\|a_{0}^\ell\|_{\dot B^{\sigma_1}_{p,\infty}}
+\|u_{0}^\ell\|_{\dot B^{\sigma_1+1}_{p,\infty}}
+\|a_0\|_{\dot{B}^{\frac{d}{2}+1}_{2,1}}^{h}
+\|u_0\|_{\dot B^{\frac{d}{2}+2-s_*}_{2,1}}^{h}
=\cX_{p,0}.
\end{align*}
Thus, we arrive at 
\begin{equation*}
\begin{aligned}
&\|a^\ell\|_{\tilde L^\infty
(\R_+;\dot B^{\sigma_1}_{p,\infty})}
+\|z^\ell\|_{\tilde L^\infty
(\R_+;\dot B^{\sigma_1+1}_{p,\infty})}
+\|a^\ell\|_{\tilde L^1
(\R_+;\dot B^{\sigma_1+2s_*}_{p,\infty})}
+\|z^\ell\|_{\tilde L^1
(\R_+;\dot B^{\sigma_1+1}_{p,\infty})}
\lesssim \cX_{p,0}.
\end{aligned}
\end{equation*}
Then, together with the definition of $z$ and \eqref{HLEst}, the estimates of $u$ can be recovered as in \eqref{negtative besov}.
\subsection{Time-decay estimates}
Define
$$E(t)=\|a^\ell\|_{\dot B^{\frac dp-1}_{p,1}}
+\|u^{\ell}\|_{\dot B^\frac dp_{p,1}}
+\|a\|_{\dot B^{\frac d2+1}_{2,1}}^{h}
+
\|u\|_{\dot B^{\frac d2+2-s_*}_{2,1}}^{h},
$$
and
$$D(t)=\|a^{\ell}\|_{\dot B^{\frac dp+2s_*-1}_{p,1}}
+
\|u^{\ell}\|_{\dot B^{\frac dp}_{p,1}}
+
\|a\|_{\dot B^{\frac d2+1}_{2,1}}^{h}
+
\|u\|_{\dot B^{\frac d2+2-s_*}_{2,1}}^{h}
+\|\partial_ta\|_{\dot B^{\frac dp}_{p,1}}.$$
By currying out similar computations as in Lemmas \ref{lemma-low-global} and \ref{lemma-high-global},
we are able to deduce the following estimates
\begin{equation*}
\begin{aligned}
&E(t)+\int_{t_0}^tD(\tau)d\tau
\lesssim 
E(t_0)
+ \cE_p(t)\int_{t_0}^tD(\tau)d\tau,
\quad t\geq t_0\geq 0.
\end{aligned}
\end{equation*}
For simplicity, we omit the details here.
Due to the smallness of $\cE_{p,0}$ and \eqref{global-Var0}, for $t\geq t_0,$ it holds that
\begin{equation*}
\begin{aligned}
&E(t)+\int_{t_0}^tD(\tau)d\tau
\lesssim 
E(t_0)
\quad t\geq t_0\geq 0.
\end{aligned}
\end{equation*}
In particular, we get
\begin{equation*}
\begin{aligned}
&E(t+h)+\int_{t_0}^{t+h}D(\tau)d\tau
\lesssim 
E(t_0).
\end{aligned}
\end{equation*}
Thus, for any $h>0$, one finds out that
\begin{align*}
\frac{E(t+h)-E(t)}{h}
+\frac1h\int_{t}^{t+h}D(\tau)d\tau
\leq 0, \quad \text{a.e.\,\,on\,\,}\R_+.
\end{align*}
Since  $E$ is nonincreasing on $\R_+$, $E$ is differentiable almost everywhere and satisfies
$$
\frac d{dt}E(t)+D(t)
\leq 0 \quad
\text{a.e.\,\,on\,\,}\R_+.
$$

Based on Lemma \ref{Classical Interpolation} and a interpolation argument, the dissipation $D(t)$ can control some power of the functional $E(t)$, which is the key to deriving the time-decay estimates. For $\sigma_1<\frac dp-1$, using the real
interpolation in Lemma \ref{Classical Interpolation}, we have
\begin{equation*}
\begin{aligned}
\|a^\ell\|_{\dot{B}_{p,1}^{\frac dp-1}}
\lesssim\|a^\ell\|_{\dot{B}_{p,\infty}^{\sigma_1}}^{\theta_1}
\|a^\ell\|_{\dot{B}_{p,\infty}^{\frac dp+2s_*-1}}^{1-\theta_1}
\end{aligned}
\end{equation*}
with $\theta_1=\frac{2s_*}{2s_*+\frac dp-1-\sigma_1}\in(0,1).$
Due to Proposition \ref{negative besov bounded}, it holds
\begin{equation*}\label{-sigma-bounded}
\begin{aligned}
\|a^\ell\|_{\dot{B}_{p,\infty}^{\sigma_1}}
\lesssim \cX_{p,0}.
\end{aligned}
\end{equation*}
Thus, one gets 
\begin{equation*}
\begin{aligned}
\|a^\ell\|_{\dot{B}_{p,1}^{\frac dp+2s_*-1}}
\geq C^{\frac{-1}{1-\theta_1}}(\cX_{p,0}
+\eta)^{\frac {-\theta_1}{1-\theta_1}}
\|a^\ell\|_{\dot{B}_{p,1}^{\frac dp-1}}^{\frac 1{1-\theta_1}}.
\end{aligned}
\end{equation*}
Here $\eta>0$ is any given constant. In addition, noticing that
\begin{equation*}
\begin{aligned}
\|u^\ell\|_{\dot{B}_{p,1}^\frac dp}
\lesssim\|u^\ell\|_{\dot{B}_{p,1}^{\frac dp}}^{\theta_1}
\|u^\ell\|_{\dot{B}_{p,1}^{\frac dp}}^{1-\theta_1},
\end{aligned}
\end{equation*}
we also get
\begin{equation*}
\begin{aligned}
\|u^\ell\|_{\dot{B}_{p,1}^{\frac dp}}
\geq C^{\frac{-1}{1-\theta_1}}(\cX_{p,0}
+\eta)^{\frac {-\theta_1}{1-\theta_1}}
\|u^\ell\|_{\dot{B}_{p,1}^\frac dp}^{\frac 1{1-\theta_1}}.
\end{aligned}
\end{equation*}
Similarly, it holds that
\begin{equation*}
\begin{aligned}
\|a\|_{\dot{B}_{2,1}^{\frac d2+1}}^h
+\|u\|_{\dot{B}_{2,1}^{\frac d2+2-s_*}}^h
&\geq C^{\frac{-1}{1-\theta_1}}(\cX_{p,0}
+\eta)^{\frac {-\theta_1}{1-\theta_1}}
(\|a\|_{\dot{B}_{2,1}^{\frac d2+1}}^h
+\|u\|_{\dot{B}_{2,1}^{\frac d2+2-s_*}}^h)^{\frac 1{1-\theta_1}}.
\end{aligned}
\end{equation*}
Consequently, the following Lyapunov-type inequality
holds
\begin{equation*}
\begin{aligned}
&\frac d{dt}E(t)+\big(\cX_{p,0}
+\eta\big)^{-\frac{2s_*}{\frac dp-1-\sigma_1}}
E(t)^{1+\frac{2s_*}{\frac dp-1-\sigma_1}}
\leq 0.
\end{aligned}
\end{equation*}
Let $E_0\triangleq E(t)|_{t=0}\lesssim\cX_{p,0}$. Solving this differential inequality leads to 
\begin{equation*}
\begin{aligned}
&\|a^\ell\|_{\dot{B}_{p,1}^{\frac dp-1}}
+\|u^\ell\|_{\dot{B}_{p,1}^\frac dp}
+\|a\|_{\dot{B}_{2,1}^{\frac d2+1}}^h
+\|u\|_{\dot{B}_{2,1}^{\frac d2+2-s_*}}^h
\\&\quad\leq \Big({E_0}^{-\frac{2s_*}{\frac dp-1-\sigma_1}}
+\frac{2s_*}{\frac dp-1-\sigma_1}
(\cX_{p,0}+\eta)^{-\frac{2s_*}{\frac dp-1-\sigma_1}}t\Big)^{-\frac{1}{2s_*}(\frac dp-1-\sigma_1)}
\\&\quad\lesssim(\cX_{p,0}+\eta)(1+t)^{-\frac{1}{2s_*}(\frac dp-1-\sigma_1)}.
\end{aligned}
\end{equation*}
As $\eta \to 0$, we obtain
\begin{equation*}
\begin{aligned}
&\|a^\ell\|_{\dot{B}_{p,1}^{\frac dp-1}}
+\|u^\ell\|_{\dot{B}_{p,1}^\frac dp}
+\|a\|_{\dot{B}_{2,1}^{\frac d2+1}}^h
+\|u\|_{\dot{B}_{2,1}^{\frac d2+2-s_*}}^h
\lesssim\cX_{p,0}(1+t)^{-\frac{1}{2s_*}(\frac dp-1-\sigma_1)}
\end{aligned}
\end{equation*}
for all $t\geq0$ and $-\frac dp-1\leq\sigma_1<\frac dp-1$. 
By Bernstein's inequality and \eqref{HLEst}, we arrive at
\begin{equation}\label{au-fracdp-decay}
\begin{aligned}
\|a\|_{\dot{B}_{p,1}^{\frac dp-1}}
+\|u\|_{\dot{B}_{p,1}^{\frac dp}}
&\lesssim
\|a^\ell\|_{\dot{B}_{p,1}^{\frac dp-1}}
+\|u^\ell\|_{\dot{B}_{p,1}^\frac dp}
+\|a\|_{\dot{B}_{2,1}^{\frac d2+1}}^h
+\|u\|_{\dot{B}_{2,1}^{\frac d2+2-s_*}}^h
\\&\lesssim\cX_{p,0}(1+t)^{-\frac{1}{2s_*}(\frac dp-1-\sigma_1)}.
\end{aligned}
\end{equation}
In addition, if
$\sigma_1<\frac dp-1$, then it holds from Lemma \ref{Classical Interpolation} that
\begin{equation}\label{a-sigma-Interpolation}
\begin{aligned}
&\|a^\ell\|_{\dot{B}_{p,1}^\sigma}
\lesssim\|a^\ell\|_{\dot{B}_{p,\infty}^{\sigma_1}}^{\theta_2}
\|a^\ell\|_{\dot{B}_{p,\infty}^{\frac dp-1}}^{1-\theta_2},
\quad\sigma\in(\sigma_1,\frac d{p}-1),
\end{aligned}
\end{equation}
with
$\theta_2=\frac{\frac dp-1-\sigma}{\frac dp-1-\sigma_1}\in(0,1)$.
By virtue of \eqref{negtative besov}, \eqref{au-fracdp-decay} and \eqref{a-sigma-Interpolation}, for all $t>0$, $-\frac dp-1\leq\sigma_1<\frac dp-1$, we gain
\begin{equation*}
\begin{aligned}
\|a^\ell\|_{\dot{B}_{p,1}^\sigma}
\lesssim\cX_{p,0}
(1+t)^{-\frac{1}{2s_*}(\sigma-\sigma_1)},
\quad\sigma\in(\sigma_1,\frac d{p}-1).
\end{aligned}
\end{equation*}
This, together with the high-frequency decay in \eqref{au-fracdp-decay}, 
leads to
\begin{equation*}
\begin{aligned}
\|a\|_{\dot{B}_{p,1}^\sigma}
\lesssim\cX_{p,0}
(1+t)^{-\frac{1}{2s_*}(\sigma-\sigma_1)},
\quad\sigma\in(\sigma_1,\frac d{p}-1].
\end{aligned}
\end{equation*}
Moreover,
it follows from \eqref{au-fracdp-decay} that
\begin{equation}\label{a-decay-dp1}
\begin{aligned}
\|a\|_{\dot{B}_{p,1}^\sigma}
 &\lesssim\|a^\ell\|_{\dot{B}_{p,1}^{\frac dp-1}}
 +\|a\|_{\dot{B}_{2,1}^{\frac d2+1}}^h\\
&\lesssim \cX_{p,0}(1+t)^{-\frac{1}{2s_*}(\frac{d}{p}-1-\sigma_1)},
\quad
\sigma\in(\frac d{p}-1,\frac{d}{p}+1].
\end{aligned}
\end{equation}
Therefore, we prove the decay estimate $\eqref{decayau}$.

Similarly, we also get
\begin{equation}\label{u-sigma-Interpolation}
\begin{aligned}
&\|u^\ell\|_{\dot{B}_{p,1}^{\sigma}}
\lesssim\|u^\ell\|_{\dot{B}_{p,\infty}^{\sigma_1+1}}^{\theta_3}
\|u^\ell\|_{\dot{B}_{p,\infty}^{\frac dp}}^{1-\theta_3},
\quad\sigma\in(\sigma_1+1,\frac d{p})
\end{aligned}
\end{equation}
with $\theta_3=\frac{\frac dp-\sigma}{\frac dp-1-\sigma_1}\in(0,1)$.
Using \eqref{negtative besov}, \eqref{au-fracdp-decay} and \eqref{u-sigma-Interpolation}, for all $t>0$, $-\frac dp-1\leq\sigma_1<\frac dp-1$, we obtain
\begin{equation*}
\begin{aligned}
\|u^\ell\|_{\dot{B}_{p,1}^{\sigma}}
\lesssim\cX_{p,0}
(1+t)^{-\frac{1}{2s_*}(\sigma-\sigma_1-1)},
\quad\sigma\in(\sigma_1+1,\frac d{p}).
\end{aligned}
\end{equation*}
Then one can arrive at
\begin{equation}\label{u-decay-1}
\begin{aligned}
\|u\|_{\dot{B}_{p,1}^{\sigma}}
\lesssim\cX_{p,0}
(1+t)^{-\frac{1}{2s_*}(\sigma-\sigma_1-1)},
\quad\sigma\in(\sigma_1+1,\frac d{p}],
\end{aligned}
\end{equation}
and
\begin{equation}\label{u-decay-2}
\begin{aligned}
\|u\|_{\dot{B}_{p,1}^\sigma}
 &\lesssim\|u^\ell\|_{\dot{B}_{p,1}^{\frac dp}}
 +\|u\|_{\dot{B}_{2,1}^{\frac d2+2-s_*}}^h\\
&\lesssim \cX_{p,0}(1+t)^{-\frac{1}{2s_*}(\frac{d}{p}-1-\sigma_1)},
\quad
\sigma\in(\frac dp,\frac dp+2-s_*].
\end{aligned}
\end{equation}
The decay of $u$ and $(a^h,u^h)$ can be improved below.
 \subsection{Improved time decay rates of \texorpdfstring{$u$}{u} and \texorpdfstring{$(a^h,u^h)$}{(ah, uh)}}
First, we present the proof of $\eqref{decayau1}$.
Recall $\eqref{E-Rli}_2$ that
$$\partial_t u+u+\nabla\Lambda^{2s_*-2}a
=-u\cdot\nabla u.$$
Performing a routine procedure yields
\begin{equation*}
\begin{aligned}
\|u\|_{\dot{B}_{p,1}^{\sigma}}
&\lesssim
e^{-t}\|u_0\|_{\dot{B}_{p,1}^{\sigma}}
+\int_{0}^te^{-(t-\tau)}\|a\|_{\dot{B}_{p,1}^{\sigma+2s_*-1}}d\tau
+\int_{0}^te^{-(t-\tau)}\|u\cdot\nabla u\|_{\dot{B}_{p,1}^{\sigma}}d\tau.
\end{aligned}
\end{equation*}
According to Lemma \ref{embedding}, \eqref{HLEst}, \eqref{low-1-infty} and \eqref{Xp0}, one arrives at
$$\|u_0\|_{\dot{B}_{p,1}^{\sigma}}
\lesssim\|u_0^\ell\|_{\dot{B}_{p,\infty}^{\sigma_1+1}}
+\|u_0\|_{\dot{B}_{2,1}^{\frac d2+2-s_*}}^h
\leq \cX_{p,0},
\quad\sigma\in(\sigma_1+1,\frac dp+2-s_*].$$
If $\sigma_1+1<\sigma\leq\frac dp-2s_*$, 
it follows from $\eqref{decayau}_1$ that
\begin{equation}\label{a-decay}
\|a\|_{\dot{B}_{p,1}^{\sigma+2s_*-1}}\lesssim\cX_{p,0}
(1+t)^{-\frac{1}{2s_*}(\sigma+2s_*-1-\sigma_1)}.
\end{equation}
Then, if $\sigma\in(\sigma_1+1,\frac dp]$, we deduce from \eqref{u-decay-1} and \eqref{u-decay-2} that
\begin{align}\label{u-nabla-u-decay}
\|u\cdot\nabla u\|_{\dot{B}_{p,1}^{\sigma}}
\lesssim\|u\|_{\dot{B}_{p,1}^{\frac dp+1}}
\|u\|_{\dot{B}_{p,1}^{\sigma}}
\lesssim\cX_{p,0}^{2}(1+t)^{-\frac{1}{2s_*}(\frac dp+\sigma-2\sigma_1-2)}.
\end{align}
Together with \eqref{a-decay} and \eqref{u-nabla-u-decay},
we conclude that
\begin{equation*}
\begin{aligned}
\|u\|_{\dot{B}_{p,1}^{\sigma}}
&\lesssim
e^{-t}\cX_{p,0}+\cX_{p,0}\int_{0}^te^{-(t-\tau)}(1+\tau)^{-\frac{1}{2s_*}(\sigma+2s_*-1-\sigma_1)}d\tau
\\&\quad+\cX_{p,0}^2\int_{0}^te^{-(t-\tau)}(1+\tau)^{-\frac{1}{2s_*}(\frac dp+\sigma-2\sigma_1-2)}d\tau
\\&\lesssim\cX_{p,0}(1+t)^{-\frac{1}{2s_*}(\sigma+2s_*-1-\sigma_1)}
\end{aligned}
\end{equation*}
with $\sigma\in(\sigma_1+1,\frac dp-2s_*].$ 
Moreover, similar to \eqref{u-decay-2}, one has
\begin{equation*}
\begin{aligned}
\|u\|_{\dot{B}_{p,1}^\sigma}
\lesssim \cX_{p,0}(1+t)^{-\frac{1}{2s_*}(\frac dp-1-\sigma_1)},
\quad
\sigma\in(\frac dp-2s_*,\frac dp+2-s_*].
\end{aligned}
\end{equation*}
The proof of $\eqref{decayau1}$ is finished.

Finally, we are ready to show \eqref{au-high-decay}.
By the embedding $\dot B^{\frac dp}_{p,1}\hookrightarrow L^\infty$, we obtain
\begin{equation}\label{partial-inf-inf}
\begin{aligned}
\|\partial_ta\|_{L_t^
\infty(L^\infty)}
&\lesssim\|\div u\|_{\tilde L_t^
\infty(\dot B^{\frac dp}_{p,1})}
+\|a\|_{\tilde L_t^
\infty(\dot B^{\frac dp}_{p,1})}
\|u\|_{\tilde L_t^
\infty(\dot B^{\frac dp+1}_{p,1})}
\\
&\quad+\|a\|_{\tilde L_t^
\infty(\dot B^{\frac dp+1}_{p,1})}
\|u\|_{\tilde L_t^
\infty(\dot B^{\frac dp}_{p,1})}.
\end{aligned}
\end{equation}
It follows from \eqref{global-Var0}, \eqref{au-dp}, \eqref{au-dp1} and \eqref{partial-inf-inf} that
\begin{equation*}
\|(\nabla a, \nabla u, \partial_ta)\|_{L_t^\infty(L^\infty)}
\lesssim\cE_{p,0}.
\end{equation*}
Due to the smallness of $\cE_{p,0}$, \eqref{lyap} can be written as
\begin{equation*}
\begin{aligned}
\frac12\frac d{dt} \mathcal{L}_j^2(t)
+\mathcal{L}_j^2(t)
&\lesssim\|(\Lambda^{s_*}R_{j}^{1},\Lambda^{s_*}R_{j}^{2},\Lambda R_j^3)\|_{L^2}
\mathcal{L}_j(t).
\end{aligned}
\end{equation*}
This gives rise to 
\begin{equation}\label{au-high-decay-pro}
\begin{aligned}
&\|a\|_{\dot B^{\frac d2+1}_{2,1}}^{h}
+\|u\|_{\dot B^{\frac d2+2-s_*}_{2,1}}^{h}
\\&\quad\lesssim e^{-t}\big(\|a_0\|_{\dot B^{\frac d2+1}_{2,1}}^{h}
+\|u_0\|_{\dot B^{\frac d2+2-s_*}_{2,1}}^{h}\big)
\\&\quad\quad+\int_{0}^te^{-(t-\tau)}\sum_{j\geq J_1-1}2^{j(\frac d2+1-s_*)}\|(\Lambda^{s_*}R_{j}^{1},\Lambda^{s_*}R_{j}^{2}, \Lambda R_j^3)\|_{L^2}d\tau.
\end{aligned}
\end{equation}
Arguing similarly to \eqref{r1}, thanks to \eqref{HLEst}, we deduce 
\begin{equation*}
\begin{aligned}
&\sum_{j\geq J_1-1}2^{j(\frac d2+1-s_*)}\|\Lambda^{s_*}R_{j}^{1}\|_{L^2}
\\&\quad\lesssim
\|\nabla u\|_{\dot B^{\frac dp}_{p,1}}
\|\nabla a\|_{\dot B^{\frac d2}_{2,1}}^{h}
+\| a\|_{\dot B^{\frac dp-1}_{p,1}}^{\ell}
\|u\|_{\dot B^{\frac dp}_{p,1}}^{\ell}
\\&\quad\quad+\|\nabla a\|_{\dot B^{\frac dp}_{p,1}}
\|u\|_{\dot B^{\frac d2+2-s_*}_{2,1}}^{h}
+\|a\|_{\dot B^{\frac dp-1}_{p,1}}^{\ell}
\|u\|_{\dot B^{\frac dp}_{p,1}}^{\ell}.
\end{aligned}
\end{equation*}
By virtue of \eqref{au-fracdp-decay}, \eqref{a-decay-dp1} and \eqref{u-decay-2}, it also holds that
\begin{equation*}
\sum_{j\geq J_{1}}2^{j(\frac{d}{2}+1-s_*)}
\|\Lambda^{s_*}R_{j}^{1}\|_{L^2}
\lesssim \cX_{p,0}^2(1+t)^{-\frac{1}{s_*}(\frac dp-1-\sigma_1)}.
\end{equation*}
Similarly, one can get
\begin{equation*}
\begin{aligned}
 &\sum_{j\geq J_1-1}2^{j(\frac d2+1-s_*)}\|(\Lambda^{s_*}R_{j}^{2},\Lambda R_j^3)\|_{L^2}
\lesssim \cX_{p,0}^2(1+t)^{-\frac{1}{s_*}(\frac dp-1-\sigma_1)}.   \end{aligned}
\end{equation*}
Together with \eqref{au-high-decay-pro}, we finally justify \eqref{au-high-decay}. 
The proof of Theorem \ref{decay-a-u} is thus complete.

\end{proof}
\section{Appendix}\label{sectionappendix}
We state some properties of Besov spaces and related estimates which we repeatedly used in the paper. 
The reader can refer to \cite{BCD} for the details of the first five lemmas. 
The first lemma is devoted to classical Bernstein's inequalities.
\begin{Lemma}[]\label{lemma21}
Let $0<r<R, 1\leq p\leq q\leq \infty$ and $k\in \mathbb{N}$. Then,  for any function $u\in L^p$ and $\lambda_{1}>0$, it holds
\begin{equation}\nonumber
\left\{
\begin{aligned}
&{\rm{Supp}}~ \mathcal{F}(u) \subset \{\xi\in\mathbb{R}^{d}~| ~|\xi|\leq \lambda_{1} R\}\Rightarrow \|D^{k}u\|_{L^q}\lesssim\lambda_{1}^{k+d(\frac{1}{p}-\frac{1}{q})}\|u\|_{L^p},\\
&{\rm{Supp}}~ \mathcal{F}(u) \subset \{\xi\in\mathbb{R}^{d}~|~ \lambda_{1} r\leq |\xi|\leq \lambda_{1} R\}\Rightarrow \|D^{k}u\|_{L^{p}}\sim\lambda_{1}^{k}\|u\|_{L^{p}}.
\end{aligned}
\right.
\end{equation}
\end{Lemma}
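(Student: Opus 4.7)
My plan is to prove both inequalities via the same general scheme: I will exploit the compact Fourier support of $u$ to represent $u$ as a convolution with a rescaled Schwartz function, and then apply Young's convolution inequality combined with a scaling computation. Everything then reduces to kernel estimates for fixed Schwartz functions whose derivatives and $L^r$-norms are independent of $\lambda_{1}$.

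For the first inequality, I would fix $\phi\in\mathcal{S}(\mathbb{R}^d)$ with $\widehat{\phi}\equiv 1$ on $B(0,R)$. Any $u$ with $\mathrm{Supp}\,\widehat{u}\subset B(0,\lambda_{1} R)$ then satisfies $u=\lambda_{1}^{d}\phi(\lambda_{1}\cdot)\ast u$ by Fourier inversion, and hence $D^{k}u=\lambda_{1}^{d+k}(D^{k}\phi)(\lambda_{1}\cdot)\ast u$. Choosing $r\in[1,\infty]$ by $1+\tfrac{1}{q}=\tfrac{1}{r}+\tfrac{1}{p}$ and combining Young's inequality with the scaling identity $\|(D^{k}\phi)(\lambda_{1}\cdot)\|_{L^r}=\lambda_{1}^{-d/r}\|D^{k}\phi\|_{L^r}$ yields a total factor of $\lambda_{1}^{d+k-d/r}=\lambda_{1}^{k+d(1/p-1/q)}$, as desired.

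For the annular statement, the bound $\|D^{k}u\|_{L^p}\lesssim\lambda_{1}^{k}\|u\|_{L^p}$ is a special case of the ball inequality with $p=q$. For the reverse direction, I would pick $\chi\in C_{c}^{\infty}(\mathbb{R}^d\setminus\{0\})$ with $\chi\equiv 1$ on $\{r\leq|\xi|\leq R\}$ and define $\psi\in\mathcal{S}$ by $\widehat{\psi}(\xi)=\chi(\xi)|\xi|^{-k}$, which is legitimate precisely because $\chi$ vanishes near the origin. For $u$ with $\mathrm{Supp}\,\widehat{u}\subset\{\lambda_{1}r\leq|\xi|\leq\lambda_{1}R\}$, scaling and Fourier inversion then give $u=\lambda_{1}^{d-k}\psi(\lambda_{1}\cdot)\ast\Lambda^{k}u$, and Young's inequality provides $\|u\|_{L^p}\lesssim\lambda_{1}^{-k}\|\Lambda^{k}u\|_{L^p}$; one concludes via the equivalence $\|\Lambda^{k}u\|_{L^p}\sim\|D^{k}u\|_{L^p}$ for annular Fourier support, itself a standard Mikhlin-type multiplier bound. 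The genuine obstacle sits in this last step: I must construct a symbol inverting $|\xi|^{k}$ on the annulus while remaining in $\mathcal{S}$, and this is exactly what the support condition $\chi\equiv 0$ near the origin makes possible; it is also the structural reason why no reverse Bernstein inequality can hold for Fourier supports that include the origin.
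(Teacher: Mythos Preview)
Your argument is correct and is essentially the classical proof given in Bahouri--Chemin--Danchin \cite{BCD}, which is exactly what the paper cites in lieu of a proof. One minor simplification: in the reverse annular inequality you can bypass the detour through $\Lambda^{k}$ by directly choosing smooth compactly supported $\widehat{g_\alpha}$ with $\sum_{|\alpha|=k}(i\xi)^{\alpha}\widehat{g_\alpha}(\xi)=1$ on the annulus (e.g.\ $\widehat{g_\alpha}(\xi)=\chi(\xi)\binom{k}{\alpha}(-i\xi)^{\alpha}|\xi|^{-2k}$), which yields $\|u\|_{L^p}\lesssim\lambda_{1}^{-k}\sup_{|\alpha|=k}\|\partial^{\alpha}u\|_{L^p}$ in one step and avoids invoking Mikhlin.
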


The next lemma states the classical interpolation inequalities. 
\begin{Lemma}\label{Classical Interpolation}
Let $1 \leq  p, r, r_1, r_2 \leq \infty$.
\begin{itemize}
\item If $u\in \dot B_{p,r_1}^s \cap \dot B_{p,r_2}^{\tilde{s}} $ and $s\not = \tilde{s}$ then,  $u\in \dot B_{p,r}^{\theta s+(1-\theta)\tilde{s}}$  for all $\theta\in (0, 1)$ and
\begin{equation}\nonumber
\begin{aligned}
\|u\|_{\dot B_{p,r}^{\theta s+(1-\theta)\tilde{s}}}
\lesssim
\|u\|_{\dot B_{p,r}^{s}}^{\theta}
\|u\|_{\dot B_{p,r}^{\tilde{s}}}^{1-\theta}
\end{aligned}
\end{equation}
with 
$$
\frac{1}{r}=\frac{\theta}{r_1}+\frac{1-\theta}{r_2}.
$$
\item If $u\in \dot B_{p,\infty}^s \cap \dot B_{p,\infty}^{\tilde{s}} $ and $s<\tilde{s}$, then,  $u\in \dot B_{p,1}^{\theta s+(1-\theta)\tilde{s}}$  for all $\theta\in (0, 1)$ and
\begin{equation*}
\|u\|_{\dot B_{p,1}^{\theta s+(1-\theta)\tilde{s}}}
\lesssim\frac{1}{\theta(1-\theta)(\tilde{s}-s)}\|u\|_{\dot B_{p,\infty}^{s}}^{\theta}
\|u\|_{\dot B_{p,\infty}^{\tilde{s}}}^{1-\theta}.
\end{equation*}
\end{itemize} 
\end{Lemma}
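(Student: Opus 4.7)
Both statements will be proved directly from the Littlewood-Paley characterization
$$
\|u\|_{\dot B^{\sigma}_{p,r}}=\bigl\|\{2^{j\sigma}\|\dot\Delta_j u\|_{L^p}\}_{j\in\Z}\bigr\|_{\ell^r},
$$
together with elementary manipulations of sequences in $\ell^r(\Z)$. The key point is that the interpolation operates only on the weights $2^{j\sigma}$, not on the $L^p$ norms of the blocks themselves, so no hard harmonic analysis is required.

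For the first item, I would observe that for each $j\in\Z$ one has the pointwise factorization
$$
2^{j(\theta s+(1-\theta)\tilde s)}\|\dot\Delta_j u\|_{L^p}
=\bigl(2^{js}\|\dot\Delta_j u\|_{L^p}\bigr)^{\theta}\bigl(2^{j\tilde s}\|\dot\Delta_j u\|_{L^p}\bigr)^{1-\theta}.
$$
Taking $\ell^r$ norms on both sides and applying H\"older's inequality with the conjugate exponents $r_1/\theta$ and $r_2/(1-\theta)$ immediately yields the claim, provided one checks that these exponents sum correctly, which is exactly the hypothesis $\frac{1}{r}=\frac{\theta}{r_1}+\frac{1-\theta}{r_2}$.

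For the second item, where an explicit constant of order $1/(\theta(1-\theta)(\tilde s-s))$ must be produced, I would split the dyadic sum at a threshold $j_0\in\Z$ to be optimized. Writing $\sigma=\theta s+(1-\theta)\tilde s$, using $\sigma-s=(1-\theta)(\tilde s-s)>0$ and $\tilde s-\sigma=\theta(\tilde s-s)>0$, one estimates separately
\begin{align*}
\sum_{j\leq j_0}2^{j\sigma}\|\dot\Delta_j u\|_{L^p}
&\leq \|u\|_{\dot B^{s}_{p,\infty}}\sum_{j\leq j_0}2^{j(1-\theta)(\tilde s-s)}
\leq \frac{2^{j_0(1-\theta)(\tilde s-s)}}{1-2^{-(1-\theta)(\tilde s-s)}}\|u\|_{\dot B^{s}_{p,\infty}},\\
\sum_{j>j_0}2^{j\sigma}\|\dot\Delta_j u\|_{L^p}
&\leq \|u\|_{\dot B^{\tilde s}_{p,\infty}}\sum_{j>j_0}2^{-j\theta(\tilde s-s)}
\leq \frac{2^{-j_0\theta(\tilde s-s)}}{1-2^{-\theta(\tilde s-s)}}\|u\|_{\dot B^{\tilde s}_{p,\infty}}.
\end{align*}
Choosing $j_0$ so that $2^{j_0(\tilde s-s)}=\|u\|_{\dot B^{\tilde s}_{p,\infty}}/\|u\|_{\dot B^{s}_{p,\infty}}$ balances the two right-hand sides at the geometric mean $\|u\|_{\dot B^{s}_{p,\infty}}^{\theta}\|u\|_{\dot B^{\tilde s}_{p,\infty}}^{1-\theta}$, yielding the announced interpolation up to the prefactors $(1-2^{-(1-\theta)(\tilde s-s)})^{-1}$ and $(1-2^{-\theta(\tilde s-s)})^{-1}$.

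The main technical point, and the only place where care is needed, is converting these geometric-series denominators into the claimed $1/(\theta(1-\theta)(\tilde s-s))$ factor. Using the elementary bound $1-2^{-\alpha}\geq c\,\alpha$ valid for $\alpha$ in any bounded range, one obtains $(1-2^{-(1-\theta)(\tilde s-s)})^{-1}\lesssim ((1-\theta)(\tilde s-s))^{-1}$ and $(1-2^{-\theta(\tilde s-s)})^{-1}\lesssim (\theta(\tilde s-s))^{-1}$; summing these via $\frac{1}{\theta}+\frac{1}{1-\theta}=\frac{1}{\theta(1-\theta)}$ completes the argument. Since $j_0$ should be an integer, a final harmless rounding is absorbed into the implicit constant. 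This quantitative bookkeeping in the endpoint case $r_1=r_2=\infty$, $r=1$ is the only obstacle; the rest is straightforward.
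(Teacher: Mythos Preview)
Your argument is correct and is precisely the standard proof one finds in the reference the paper cites (Bahouri--Chemin--Danchin, \emph{Fourier Analysis and Nonlinear Partial Differential Equations}). The paper itself does not supply a proof of this lemma; it is listed in the Appendix among several technical lemmas with the remark ``The reader can refer to \cite{BCD} for the details of the first five lemmas,'' so there is no paper-specific proof to compare against.

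Two minor remarks. First, in the first item the right-hand side of the inequality as printed in the paper carries the index $r$ rather than $r_1,r_2$; your proof (correctly) produces $\|u\|_{\dot B^{s}_{p,r_1}}^{\theta}\|u\|_{\dot B^{\tilde s}_{p,r_2}}^{1-\theta}$, which is what is meant and what the hypothesis $u\in\dot B^{s}_{p,r_1}\cap\dot B^{\tilde s}_{p,r_2}$ calls for. Second, your caveat about the bound $1-2^{-\alpha}\geq c\,\alpha$ being valid only for $\alpha$ in a bounded range is exactly right; the displayed constant $1/(\theta(1-\theta)(\tilde s-s))$ is meant to track the blow-up as $\theta\to0,1$ or $\tilde s\to s$, and the implicit constant in $\lesssim$ absorbs the behavior for large $\tilde s-s$ (where the geometric-series factors are bounded anyway). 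With that understanding, your bookkeeping is complete.
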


The following interpolation inequalities for high and low frequencies are also used in this paper.
\begin{cor}\label{Interpolation}
Let $s_1\leq s_2$, $q,r\in[1,+\infty]$, $\theta\in (0,1)$ and $1\leq\alpha_1\leq \alpha\leq \alpha_2\leq \infty$ such that $\frac{1}{\alpha}=\frac{\theta}{\alpha_1}+\frac{1-\theta}{\alpha_2}$. Then we have
\begin{equation*}
\begin{aligned}
&\|u\|_{\tL_T^{\alpha}(\dB_{q,r}^{\theta s_1+(1-\theta)s_2})}^{\ell}
\lesssim\Big(\|u\|_{\tL_T^{\alpha_1}(\dB_{q,r}^{s_1})}^{\ell}\Big)^{\theta}
\Big(\|u\|_{\tL_T^{\alpha_2}(\dB_{q,r}^{s_2})}^{\ell}\Big)^{1-\theta},
\end{aligned}
\end{equation*}
and
\begin{equation*}
\begin{aligned}
&\|u\|_{\tL_T^{\alpha}(\dB_{q,r}^{\theta s_1+(1-\theta)s_2})} ^{h}
\lesssim\Big(\|u\|_{\tL_T^{\alpha_1}(\dB_{q,r}^{s_1})} ^{h}\Big)^{\theta}
\Big(\|u\|_{\tL_T^{\alpha_2}(\dB_{q,r}^{s_2})} ^{h}\Big)^{1-\theta}.
\end{aligned}
\end{equation*}
\end{cor}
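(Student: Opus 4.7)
\bigskip

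\noindent\textbf{Proof proposal for Corollary \ref{Interpolation}.}

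The plan is to derive both low- and high-frequency bounds by a two-step H\"older argument. The Chemin--Lerner norm restricted to low frequencies is
\[
\|u\|_{\tL_T^{\alpha}(\dB_{q,r}^{s})}^{\ell}
=\Big\|\bigl\{2^{js}\|\dot\Delta_j u\|_{L_T^{\alpha}(L^q)}\bigr\}_{j\leq J_1}\Big\|_{\ell^r},
\]
and analogously for the high-frequency part, with the summation range $j\geq J_1-1$. Since the two cases differ only in the range of summation, I would treat them uniformly: fix either $E=\{j\leq J_1\}$ or $E=\{j\geq J_1-1\}$, and prove the estimate for arbitrary index set $E\subset\mathbb{Z}$.

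First I would perform H\"older in time at each fixed dyadic level. For every $j\in E$, since $\tfrac1\alpha=\tfrac\theta{\alpha_1}+\tfrac{1-\theta}{\alpha_2}$, the classical H\"older inequality on $(0,T)$ yields
\[
\|\dot\Delta_j u\|_{L_T^{\alpha}(L^q)}
\leq \|\dot\Delta_j u\|_{L_T^{\alpha_1}(L^q)}^{\theta}\,
\|\dot\Delta_j u\|_{L_T^{\alpha_2}(L^q)}^{1-\theta}.
\]
Multiplying by $2^{j(\theta s_1+(1-\theta)s_2)}=(2^{js_1})^{\theta}(2^{js_2})^{1-\theta}$ gives
\[
2^{j(\theta s_1+(1-\theta)s_2)}\|\dot\Delta_j u\|_{L_T^{\alpha}(L^q)}
\leq A_j^{\theta}\,B_j^{1-\theta},
\]
where I set $A_j\triangleq 2^{js_1}\|\dot\Delta_j u\|_{L_T^{\alpha_1}(L^q)}$ and $B_j\triangleq 2^{js_2}\|\dot\Delta_j u\|_{L_T^{\alpha_2}(L^q)}$.

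Next I would apply H\"older in the $\ell^r$ summation over $j\in E$ with exponents $1/\theta$ and $1/(1-\theta)$. Explicitly, if $r<\infty$,
\[
\sum_{j\in E}\bigl(A_j^{\theta}B_j^{1-\theta}\bigr)^r
=\sum_{j\in E} A_j^{r\theta} B_j^{r(1-\theta)}
\leq \Bigl(\sum_{j\in E}A_j^{r}\Bigr)^{\theta}\Bigl(\sum_{j\in E}B_j^{r}\Bigr)^{1-\theta},
\]
and the case $r=\infty$ is immediate by taking suprema. Taking the $r$-th root produces
\[
\Bigl\|\{A_j^{\theta}B_j^{1-\theta}\}_{j\in E}\Bigr\|_{\ell^r}
\leq \Bigl\|\{A_j\}_{j\in E}\Bigr\|_{\ell^r}^{\theta}
\Bigl\|\{B_j\}_{j\in E}\Bigr\|_{\ell^r}^{1-\theta},
\]
which, upon specializing $E=\{j\leq J_1\}$ (respectively $E=\{j\geq J_1-1\}$), yields exactly the low-frequency (respectively high-frequency) interpolation inequality stated in the corollary.

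There is no real obstacle in this argument; it is a direct assembly of two applications of H\"older's inequality (once in the time variable, once in the frequency index), together with the observation that both frequency truncations correspond to restricting the $\ell^r$ sum to a fixed subset of $\mathbb{Z}$ on which the same H\"older inequality applies. The only minor care needed is the endpoint case $r=\infty$ or $\alpha_2=\infty$, which is handled by interpreting the relevant norms as essential suprema; the inequality $A_j^{\theta}B_j^{1-\theta}\leq(\sup_j A_j)^\theta(\sup_j B_j)^{1-\theta}$ still holds pointwise in $j$, so the argument goes through unchanged.
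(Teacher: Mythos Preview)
Your proof is correct and is precisely the standard approach; the paper itself states this corollary in the Appendix without proof, treating it as an immediate consequence of the classical interpolation Lemma~\ref{Classical Interpolation} adapted to the Chemin--Lerner norms. Your two-step H\"older argument (first in $t$, then in the dyadic index over the truncated range $E$) is exactly how one verifies it, and there is nothing to add.
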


The classical product laws  have been used several times.
\begin{Lemma}\label{ClassicalProductLaw}
Let $s>0$, $1\leq p,r\leq \infty$, then,  we have
\begin{equation*}\label{ClassicalProductLawEst1}
\begin{aligned}
&\|uv\|_{\dot{B}^{s}_{p,r}}
\lesssim
\|u\|_{L^\infty}\|v\|_{\dot{B}^{s}_{p,r}}
+\|u\|_{\dot{B}^{s}_{p,r}}\|v\|_{L^\infty}.
\end{aligned}
\end{equation*}
For $d\geq 1$ and $-\min\{d/p, d/p'\}<s\leq d/p$ for $1/p+1/p'=1$, the following inequality holds{\rm:}
\begin{equation*}\label{ClassicalProductLawEst2}
\begin{aligned}
\|uv\|_{\dot{B}^{s}_{p,1}}
\lesssim
\|u\|_{\dot{B}^{\frac{d}{p}}_{p,1}}\|v\|_{\dot{B}^{s}_{p,1}}.
\end{aligned}
\end{equation*}
\end{Lemma}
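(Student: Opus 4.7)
The plan is to invoke Bony's paraproduct decomposition
\[ uv \;=\; T_u v + T_v u + R(u,v), \]
where $T_u v \;\triangleq\; \sum_{j} \dot{S}_{j-1} u \cdot \dot{\Delta}_j v$ is the paraproduct and $R(u,v) \;\triangleq\; \sum_{|j-j'|\leq 1} \dot{\Delta}_j u \cdot \dot{\Delta}_{j'} v$ is the remainder, and to estimate each of the three pieces separately in the target Besov norm.

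For the first inequality (which only requires $s>0$), the block $\dot{S}_{j'-1}u \cdot \dot{\Delta}_{j'}v$ is spectrally localized in an annulus $|\xi|\sim 2^{j'}$, so after applying $\dot{\Delta}_j$ only indices $|j'-j|\leq N_0$ survive and H\"older's inequality gives
\[ \|\dot{\Delta}_j T_u v\|_{L^p} \;\lesssim\; \|u\|_{L^\infty}\!\!\sum_{|j'-j|\leq N_0}\!\!\|\dot{\Delta}_{j'}v\|_{L^p}, \]
from which $\|T_u v\|_{\dot{B}^s_{p,r}} \lesssim \|u\|_{L^\infty}\|v\|_{\dot{B}^s_{p,r}}$ follows by multiplication by $2^{js}$ and an $\ell^r$ summation. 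The paraproduct $T_v u$ is handled symmetrically. For the remainder, the key observation is that $\dot{\Delta}_{j'}u \cdot \dot{\Delta}_{j'}v$ is only supported in a ball $|\xi|\lesssim 2^{j'}$, so that $\dot{\Delta}_j R(u,v)$ involves a sum over $j' \geq j - N_0$; the condition $s>0$ is then precisely what makes $\sum_{j' \geq j-N_0} 2^{(j-j')s}$ converge geometrically, and one obtains $\|R(u,v)\|_{\dot{B}^s_{p,r}} \lesssim \|u\|_{\dot{B}^s_{p,r}}\|v\|_{L^\infty}$.

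For the second inequality with $-\min\{d/p,d/p'\} < s \leq d/p$, I would reuse the same decomposition but sharpen the bounds. For $T_u v$, the embedding $\dot{B}^{d/p}_{p,1}\hookrightarrow L^\infty$ reduces matters to the first step. For $T_v u$, I would invoke the variant paraproduct estimate
\[ \|T_v u\|_{\dot{B}^s_{p,1}} \;\lesssim\; \|v\|_{\dot{B}^{s-d/p}_{\infty,1}}\|u\|_{\dot{B}^{d/p}_{p,1}}, \]
together with the Besov embedding $\dot{B}^s_{p,1}\hookrightarrow\dot{B}^{s-d/p}_{\infty,1}$, which is valid precisely when $s \leq d/p$. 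For the remainder, H\"older's inequality in $L^p\cdot L^p\hookrightarrow L^{p/2}$ combined with Bernstein's inequality (exploiting the ball support of radius $\lesssim 2^{j'}$) produces a factor $2^{j'd/p'}$; the sum over $j'$ then converges under the condition $s+d/p'>0$, and the symmetric roles of $u$ and $v$ account for the condition $s+d/p>0$, yielding exactly $s>-\min\{d/p,d/p'\}$.

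The main obstacle is the remainder $R(u,v)$ in the second inequality: one must simultaneously use Bernstein with the correct exponent, H\"older in $L^{p/2}$ via the ball (rather than annular) spectral support, and a careful distribution of the $d/p$ loss, in order to recover the sharp lower bound on $s$. Once the three pieces of the Bony decomposition are controlled, summing with weight $2^{js}$ in $\ell^1$ gives the stated product laws.
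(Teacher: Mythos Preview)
Your approach via Bony's paraproduct decomposition is correct and is precisely the standard proof of these product laws. Note, however, that the paper does \emph{not} supply its own proof of this lemma: it is stated in the Appendix with the remark that ``the reader can refer to \cite{BCD} for the details of the first five lemmas.'' The argument you outline is essentially the one found in \cite{BCD} (see Theorems~2.47, 2.52 and Corollary~2.54 there), so there is nothing to compare against beyond the reference.

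One small correction: in your treatment of $T_v u$ for the second inequality, the constraint $s\leq d/p$ comes from the paraproduct continuity estimate (which needs the low-frequency factor to lie in a Besov space of \emph{nonpositive} regularity index, here $s-d/p\leq 0$), not from the embedding $\dot{B}^{s}_{p,1}\hookrightarrow\dot{B}^{s-d/p}_{\infty,1}$, which holds for all $s$. Similarly, your description of the remainder is slightly imprecise: the H\"older pairing $L^p\cdot L^p\hookrightarrow L^{p/2}$ followed by Bernstein requires $p\geq 2$ and yields the constraint $s+d/p>0$; for $p<2$ one instead uses $L^\infty\cdot L^p\hookrightarrow L^p$ with Bernstein on the $L^\infty$ factor, giving $s+d/p'>0$. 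The two cases together produce exactly $s>-\min\{d/p,d/p'\}$. These are cosmetic issues; the proof is sound.
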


 Next is the embedding properties and some useful properties.
\begin{Lemma}\label{embedding}
The following statements hold:
\begin{itemize}
\item For any 
$1\leq p\leq q\leq \infty,$ we have the following chain of the continuous embedding:
$$\dot{B}_{p,1}^0\hookrightarrow L^{p}
\hookrightarrow\dot{B}_{p,\infty}^0\hookrightarrow\dot{B}_{q,\infty}^\sigma\quad 
\text{for}\quad \sigma=-d(\frac 1p-\frac 1q)<0.$$
\item  If $\sigma\in\R, 1\leq p_1\leq p_2\leq\infty$ and 
$1\leq r_1\leq r_2\leq\infty$, then
$$
\dot{B}_{p_1,r_1}^{\sigma}\hookrightarrow
\dot{B}_{p_2,r_2}^{\sigma-d(\frac1{p_1}-\frac1{p_2})}.
$$
\item  The space $\dot{B}_{p,1}^{\frac dp}$ is continuously embedded in the set of bounded continuous functions {\rm(}going
to zero at infinity if, additionally, $p<\infty).$
\item Let $\Lambda^{\sigma}$ be defined by $\Lambda^{\sigma}u=(-\Delta)^{\frac{\sigma}{2}}u\triangleq\mathcal{F}^{-1}(|\xi|^{\sigma}\mathcal{F}(u))$ for any $\sigma\in \R$ and $u\in\mathcal{S}_{h}'$, then $\Lambda^{\sigma}$ is an isomorphism from $\dot{B}_{p,r}^{s}$ to $\dot{B}_{p,r}^{s-\sigma}.$
\item 
Let $1\leq p_1,\,p_2,\,r_1,\,r_2\,\leq \infty,\, s_1\in\R,$ and $s_2\in\R$ satisfy
$$s_2<\frac d{p_2},\quad s_2=\frac d{p_2}\,\,\text{and}\,\, r_2=1.$$
Then, the space $\dot{B}_{p_1,r_1}^{s_1}\cap\dot{B}_{p_2,r_2}^{s_2}$ endowed with the norm $\|\cdot\|_{\dot{B}_{p_1,r_1}^{s_1}}+\|\cdot\|_{\dot{B}_{p_2,r_2}^{s_2}}$ is a Banach space.
\end{itemize}
\end{Lemma}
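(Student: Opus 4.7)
My plan is to treat the five assertions in turn, reducing each to a dyadic estimate on Littlewood--Paley blocks via Bernstein's inequality (Lemma \ref{lemma21}) and Young's convolution inequality. The common starting point is the decomposition $u=\sum_j\dot\Delta_j u$ in $\mathcal{S}'$, valid for every $u\in\mathcal{S}_h'$, together with the representation $\dot\Delta_j u=2^{jd}h(2^j\cdot)\star u$ which makes $\dot\Delta_j$ convolution with a fixed $L^1$-normalized kernel independent of $j$ (up to scaling).

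For the first chain, $\dot B^0_{p,1}\hookrightarrow L^p$ follows from the triangle inequality applied to the Littlewood--Paley sum, and $L^p\hookrightarrow\dot B^0_{p,\infty}$ follows from Young's inequality giving $\|\dot\Delta_j u\|_{L^p}\lesssim\|u\|_{L^p}$ uniformly in $j$; the final inclusion into $\dot B^\sigma_{q,\infty}$ comes from Bernstein, which yields $2^{j\sigma}\|\dot\Delta_j u\|_{L^q}\lesssim\|\dot\Delta_j u\|_{L^p}$ with $\sigma=-d(1/p-1/q)$. The second assertion is exactly the same Bernstein gain of integrability combined with the elementary sequence embedding $\ell^{r_1}\hookrightarrow\ell^{r_2}$ for $r_1\leq r_2$. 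For the third, Bernstein gives $\|\dot\Delta_j u\|_{L^\infty}\lesssim 2^{jd/p}\|\dot\Delta_j u\|_{L^p}$, and summing in $j$ shows that the Littlewood--Paley series converges uniformly, so $u$ is a uniform limit of continuous functions and hence continuous and bounded; the decay at infinity when $p<\infty$ follows from the fact that each block is a convolution of an $L^p$ function with a Schwartz kernel, so lies in $C_0$, and the series converges uniformly. For the fourth, one observes that $\Lambda^\sigma\dot\Delta_j$ has convolution kernel $2^{j(\sigma+d)}\tilde h(2^j\cdot)$ with $\tilde h=\mathcal{F}^{-1}(|\xi|^\sigma\varphi)\in\mathcal{S}$, so $\|\dot\Delta_j\Lambda^\sigma u\|_{L^p}\sim 2^{j\sigma}\|\dot\Delta_j u\|_{L^p}$; hence $\Lambda^\sigma$ and its inverse $\Lambda^{-\sigma}$ are both bounded as claimed.

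I expect the fifth assertion to be the main obstacle: the vector space and norm properties are immediate, but completeness is delicate because a Cauchy sequence in $\dot B^s_{p,r}$ need not converge in $\mathcal{S}'$ without some control at low frequencies. Given a Cauchy sequence $\{u_n\}$ in $\dot B^{s_1}_{p_1,r_1}\cap\dot B^{s_2}_{p_2,r_2}$, each block $\dot\Delta_j u_n$ converges in $L^{p_1}\cap L^{p_2}$ to some function $u^{(j)}$, and the candidate limit is $u=\sum_j u^{(j)}$. The hypothesis $s_2<d/p_2$, or $s_2=d/p_2$ with $r_2=1$, is precisely what guarantees that the partial sums $\sum_{j\leq J}u^{(j)}$ converge in $\mathcal{S}'$ (not merely modulo polynomials) and that $\dot S_{j'} u\to 0$ uniformly as $j'\to-\infty$, so that $u\in\mathcal{S}_h'$. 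This low-frequency analysis is the technical heart of the proof; once it is in place, the convergence $u_n\to u$ in the intersection norm follows from Fatou's lemma applied to the dyadic coefficients, exactly as in \cite{BCD}.
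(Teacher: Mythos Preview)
The paper does not prove this lemma: it is stated in the Appendix with the remark that the reader can refer to \cite{BCD} for details. Your sketch is correct and follows exactly the standard arguments found in that reference---Bernstein and Young for the embeddings, uniform convergence of the dyadic series for continuity and $C_0$ behavior, Fourier-multiplier scaling for $\Lambda^\sigma$, and the low-frequency control afforded by $s_2<d/p_2$ (or $s_2=d/p_2$, $r_2=1$) for completeness.
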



The following lemma is concerned with the classical commutator estimates.
\begin{Lemma}
\label{classical-commutator-estimates}
Let $1\leq p\leq\infty, 1\leq r\leq\infty$
and $-\min(\frac dp,\frac d{p'})<s<\frac dp+1    
(\text{or}~ s=\frac dp+1~  \text{if}~ r=1)$ with $\frac 1p+\frac1{p'}=1$. Then it holds that
$$2^{js}\|\dot S_{j-1}u\partial_{x_i} v_{j}-\dot{\Delta}_{j}(u\partial_{x_i} v)\|_{L^p}
\lesssim c_j\|\nabla u\|_{\dot B^{\frac dp}_{p,1}}
\|v\|_{\dot B^{s}_{p,r}},\quad i=1,2,...,d,
$$
where $c_j\in l^r$ satisfying $\|c_j\|_{l^r}=1$.
\end{Lemma}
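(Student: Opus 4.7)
The natural tool is Bony's paraproduct decomposition. Writing
\[
u\,\partial_{x_i}v \;=\; T_u\partial_{x_i}v \;+\; T_{\partial_{x_i}v}\,u \;+\; R(u,\partial_{x_i}v),
\]
with $T_f g=\sum_k \dot S_{k-1}f\,\dot\Delta_k g$ and $R(f,g)=\sum_k \dot\Delta_k f\,\wt{\dot\Delta}_k g$, I notice that $\dot S_{j-1}u\,\partial_{x_i}v_j$ is exactly the $j$-th summand of $T_u\partial_{x_i}v$. Therefore
\[
\dot S_{j-1}u\,\partial_{x_i}v_j-\dot\Delta_j(u\,\partial_{x_i}v)
=\underbrace{\bigl(\dot S_{j-1}u\,\partial_{x_i}v_j-\dot\Delta_jT_u\partial_{x_i}v\bigr)}_{\mathrm{I}_j}
-\underbrace{\dot\Delta_jT_{\partial_{x_i}v}u}_{\mathrm{II}_j}
-\underbrace{\dot\Delta_jR(u,\partial_{x_i}v)}_{\mathrm{III}_j},
\]
and the whole game is to bound each piece after multiplying by $2^{js}$.

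For $\mathrm{I}_j$, spectral localization forces $\dot\Delta_j(\dot S_{k-1}u\,\partial_{x_i}v_k)=0$ unless $|k-j|\leq N_0$, so $\mathrm{I}_j$ collapses to a finite sum of genuine commutators $[\dot\Delta_j,\dot S_{k-1}u]\partial_{x_i}v_k$ (plus the neighbor terms where the paraproduct block does not match exactly). Writing the kernel representation $\dot\Delta_j f(x)=2^{jd}\int h(2^j(x-y))f(y)\,dy$ and performing a first-order Taylor expansion $\dot S_{k-1}u(y)-\dot S_{k-1}u(x)=(y-x)\cdot\int_0^1\nabla\dot S_{k-1}u(x+\tau(y-x))\,d\tau$, the gain of one derivative on $u$ is compensated by the factor $2^{-j}$ coming from $|x-y|h(2^j(x-y))$, which cancels the $\partial_{x_i}$ loss on $v_k$. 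A convolution inequality of Young type then yields
\[
\|\mathrm{I}_j\|_{L^p}\lesssim 2^{-j}\sum_{|k-j|\leq N_0}\|\nabla\dot S_{k-1}u\|_{L^\infty}\|\partial_{x_i}v_k\|_{L^p}
\lesssim \|\nabla u\|_{\dot B^{d/p}_{p,1}}\sum_{|k-j|\leq N_0}\|\dot\Delta_k v\|_{L^p},
\]
where $\dot B^{d/p}_{p,1}\hookrightarrow L^\infty$ was used. Multiplying by $2^{js}$ produces the admissible sequence $c_j$ in $\ell^r$.

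For $\mathrm{II}_j$, again $|k-j|\leq N_0$ is forced and we estimate $\|\dot S_{k-1}\partial_{x_i}v\,\dot\Delta_k u\|_{L^p}\lesssim 2^k\|\dot S_{k-1}v\|_{L^\infty}\|\dot\Delta_k u\|_{L^p}$, which after reshuffling and Bernstein gives a bound by $\|\nabla u\|_{\dot B^{d/p}_{p,1}}\|v\|_{\dot B^s_{p,r}}$ using the embedding of the low-frequency block of $v$, valid under the lower bound $s>-d/p$. For $\mathrm{III}_j$, the remainder spectrally lives on $2^k\gtrsim 2^j$, so
\[
2^{js}\|\mathrm{III}_j\|_{L^p}\lesssim \sum_{k\geq j-N_0}2^{(j-k)s}2^{ks}\|\dot\Delta_k u\|_{L^{p_1}}\|\wt{\dot\Delta}_k\partial_{x_i}v\|_{L^{p_2}},
\]
with $1/p=1/p_1+1/p_2$ chosen so that H\"older and Bernstein convert one factor into $\|\nabla u\|_{\dot B^{d/p}_{p,1}}$; a discrete Young convolution in $\ell^r$ closes the bound, provided $s<d/p+1$ (or $s=d/p+1$ when $r=1$) so that the geometric series in $2^{(j-k)s}$ converges. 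The lower bound $s>-d/p'$ is what allows the dual pairing used in $\mathrm{III}_j$ when all frequencies $k\geq j$ are summed.

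\textbf{Main obstacle.} The delicate point is the commutator $\mathrm{I}_j$: one must genuinely gain a derivative, which is not free since a priori both $\dot\Delta_j$ and $\dot S_{k-1}u$ have the same frequency scale $2^j$. The Taylor-expansion-plus-Schwartz-kernel trick is the right device, but it requires some care with the neighbor terms $k=j\pm 1,\dots,j\pm N_0$ and with the fact that $\dot\Delta_j$ does not preserve $\dot S_{k-1}u\,\partial_{x_i}v_k$ exactly when $k\neq j$. Once this is handled, the combined bound for $\mathrm{I}_j+\mathrm{II}_j+\mathrm{III}_j$ gives a sequence of the form $c_j\|\nabla u\|_{\dot B^{d/p}_{p,1}}\|v\|_{\dot B^s_{p,r}}$ with $\|c_j\|_{\ell^r}=1$, which is exactly the claimed estimate.
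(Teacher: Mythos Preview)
The paper does not supply its own proof of this lemma; it is listed in the Appendix among the results for which the authors simply write that ``the reader can refer to \cite{BCD}'' (Bahouri--Chemin--Danchin). Your paraproduct-based argument is precisely the standard proof found there (essentially Lemma~2.100 in \cite{BCD}): the splitting into the genuine commutator piece $\mathrm{I}_j$ handled by the kernel representation and first-order Taylor trick, the reverse paraproduct piece $\mathrm{II}_j=T_{\partial_{x_i}v}u$, and the remainder piece $\mathrm{III}_j=R(u,\partial_{x_i}v)$, with the upper and lower restrictions on $s$ entering exactly where you indicate. Your proposal is correct and coincides with the reference the paper invokes, so there is nothing further to compare.
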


We also need the commutator estimates for fractional operator \rm(e.g., see \cite{Bai-2024}\rm).
\begin{Lemma}\cite{Bai-2024}\label{frac-commutator}
\label{frac-commutator-estimates}
Let $\alpha>1.$ Then the following inequality holds true:
$$\|[\Lambda^{\alpha-1},\dot{S}_{j-1}v\cdot\nabla]\dot{\Delta}_j\sigma\|_{L^2}
\leq C2^{j(\alpha-1)}\|\nabla v\|_{L^{\infty}}
\|\dot{\Delta}_j\sigma\|_{L^2}.$$
\end{Lemma}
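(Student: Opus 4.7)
\textbf{Proof proposal for Lemma \ref{frac-commutator}.} The plan is to exploit the spectral localization of the two factors inside the commutator in order to reduce $\Lambda^{\alpha-1}$ to a convolution operator with a Schwartz kernel, and then use the standard Taylor expansion trick for convolution commutators. Let me write $w = \dot S_{j-1}v$ and $f = \dot\Delta_j\sigma$. Since $\mathrm{supp}\,\hat w \subset \{|\xi|\lesssim 2^{j-1}\}$ and $\mathrm{supp}\,\widehat{\nabla f} \subset \{|\xi|\sim 2^j\}$, both $w\cdot\nabla f$ and $\nabla f$ have Fourier support contained in a common annulus $2^j\{r_0\leq|\xi|\leq R_0\}$ for some $0<r_0<R_0$. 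Hence one can pick a smooth $\tilde\varphi$ compactly supported in an annulus, with $\tilde\varphi\equiv 1$ on that common annulus, so that $\widehat{w\cdot\nabla f}=\tilde\varphi(2^{-j}\cdot)\widehat{w\cdot\nabla f}$ and similarly for $\widehat{\nabla f}$.

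Setting $\psi(\xi)\triangleq |\xi|^{\alpha-1}\tilde\varphi(\xi)$ and $g\triangleq \mathcal{F}^{-1}\psi$, the function $\psi$ is smooth and compactly supported (the singularity of $|\xi|^{\alpha-1}$ at the origin is killed by $\tilde\varphi$), so $g\in\mathcal{S}(\R^d)$. With $g_j(x)\triangleq 2^{jd}g(2^j x)$, a direct Fourier computation gives the identities
\begin{equation*}
\Lambda^{\alpha-1}(w\cdot\nabla f) = 2^{j(\alpha-1)}\, g_j * (w\cdot\nabla f),\qquad
\Lambda^{\alpha-1}\nabla f = 2^{j(\alpha-1)}\, g_j * \nabla f.
\end{equation*}
Subtracting and writing the convolution as an integral yields the commutator representation
\begin{equation*}
[\Lambda^{\alpha-1},w\cdot\nabla]f(x)
= 2^{j(\alpha-1)}\int_{\R^d} g_j(x-y)\bigl(w(y)-w(x)\bigr)\cdot\nabla f(y)\,dy.
\end{equation*}

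Next I would apply the mean value theorem: $|w(y)-w(x)|\leq \|\nabla w\|_{L^\infty}|x-y|$, together with the Bernstein/low-frequency bound $\|\nabla \dot S_{j-1}v\|_{L^\infty}\lesssim \|\nabla v\|_{L^\infty}$. This reduces matters to controlling $\int |g_j(x-y)|\,|x-y|\,|\nabla f(y)|\,dy$. By the change of variable $z=2^j(x-y)$ one sees that $|g_j(x-y)|\,|x-y| = 2^{-j}\, H_j(x-y)$ with $H(z)\triangleq |z|\,|g(z)|$ and $H_j(x)=2^{jd}H(2^j x)$; since $g\in\mathcal{S}$, we have $\|H_j\|_{L^1}=\|H\|_{L^1}<\infty$. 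Young's convolution inequality and Bernstein's inequality $\|\nabla\dot\Delta_j\sigma\|_{L^2}\lesssim 2^j\|\dot\Delta_j\sigma\|_{L^2}$ then give
\begin{equation*}
\|[\Lambda^{\alpha-1},\dot S_{j-1}v\cdot\nabla]\dot\Delta_j\sigma\|_{L^2}
\lesssim 2^{j(\alpha-1)}\cdot\|\nabla v\|_{L^\infty}\cdot 2^{-j}\cdot 2^j\|\dot\Delta_j\sigma\|_{L^2},
\end{equation*}
which is precisely the claimed bound.

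The only delicate point, and what I expect to be the main technical obstacle, is ensuring that $g=\mathcal{F}^{-1}(|\xi|^{\alpha-1}\tilde\varphi)$ lies in $\mathcal{S}$ uniformly in $j$: this is where the assumption $\alpha>1$ is used implicitly together with the fact that $\tilde\varphi$ is supported away from the origin, so the symbol $|\xi|^{\alpha-1}\tilde\varphi(\xi)$ is smooth and compactly supported. Everything else is routine once the spectral localization $\tilde\varphi(2^{-j}D)$ is identified on the common frequency annulus of $w\cdot\nabla f$ and $\nabla f$.
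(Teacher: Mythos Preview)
Your argument is correct and is the standard route for this kind of spectrally localized commutator estimate. The paper itself does not supply a proof; it simply quotes the lemma from \cite{Bai-2024}, so there is nothing in the paper to compare your approach against. Your reduction---localizing $\Lambda^{\alpha-1}$ to a Schwartz convolution kernel via the common annular support of $\dot S_{j-1}v\cdot\nabla\dot\Delta_j\sigma$ and $\nabla\dot\Delta_j\sigma$, then running the first-order Taylor/mean-value estimate followed by Young and Bernstein---is exactly the expected method.

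One small correction to your commentary: the hypothesis $\alpha>1$ is \emph{not} actually what makes $\psi(\xi)=|\xi|^{\alpha-1}\tilde\varphi(\xi)$ smooth. Since $\tilde\varphi$ is supported away from the origin, $|\xi|^{\alpha-1}\tilde\varphi(\xi)\in C_c^\infty$ for every real $\alpha$, and your argument goes through unchanged. The restriction $\alpha>1$ in the statement is presumably there for the intended applications (where $\Lambda^{\alpha-1}$ is a genuine differentiation), not because the proof requires it.
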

 The following lemma is very useful in high frequencies.
\begin{Lemma}\cite{Barat-Danchin,Barat-Danchin2023}
\label{barat-commutator-estimates}
Let $2\leq p\leq 4$ and $s > 0$. Define $p^*=\frac{2p}{p-2}${\rm(}$p^*=\infty, p=2${\rm)}. For $j\in \Z$, denote
$R_j=\dot{S}_{j-1} w \dot\Delta_j z-\dot\Delta_j(wz)$. 
There exists a constant $C$  such that
\begin{equation*}
\begin{aligned}
&\sum\limits_{j\geq J_1-1} 2^{sj}\|R_j\|_{L^2} \\
&\quad \leq
C\bigg(\|\nabla w\|_{\dB_{p,1}^{\frac{d}{p}}}\|z\|_{\dB_{2,1}^{s-1}}^{h}
+2^{(s-\eta_1)J_1}\|z\|_{\dB_{p,1}^{\frac{d}{p}-\frac{d}{p^*}}}^{\ell}
\|w\|_{\dB_{p,1}^{\eta_1}}^{\ell}
\\&\qquad+\|z\|_{\dB_{p,1}^{\frac{d}{p}-k}}\|w\|_{\dB_{2,1}^{s+k}}^{h}
+2^{(s-\eta_2-1)J_1}\|z\|_{\dB_{p,1}^{\eta_2}}^{\ell}
\|\nabla w\|_{\dB_{p,1}^{\frac{d}{p}-\frac{d}{p^*}}}^{\ell}
\bigg),
\end{aligned}
\end{equation*}
for any $k\geq0, \eta_1\geq s$ and $\eta_2\in\R.$
\end{Lemma}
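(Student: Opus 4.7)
The plan is to prove this commutator estimate via Bony's paradifferential decomposition, combined with careful low/high-frequency splitting at the threshold $J_1$ and the Hölder pairing $1/p+1/p^*=1/2$. First, I would decompose the product using Bony's formula $wz=T_w z + T_z w + R(w,z)$, so that
\begin{equation*}
R_j = \bigl(\dot\Delta_j(T_w z) - \dot S_{j-1} w\,\dot\Delta_j z\bigr) + \dot\Delta_j(T_z w) + \dot\Delta_j R(w,z).
\end{equation*}
The bracketed difference is the genuine commutator: since only finitely many indices $k$ with $|k-j|\le N_0$ contribute to $\dot\Delta_j(T_w z)$, writing $\dot\Delta_j f = 2^{jd}h(2^j\cdot)\star f$ and performing a first-order Taylor expansion on $\dot S_{k-1} w$ yields the standard gain-of-one-derivative bound $\|[\dot\Delta_j,\dot S_{k-1}w]\dot\Delta_k z\|_{L^2} \lesssim 2^{-j}\|\nabla w\|_{L^\infty}\|\dot\Delta_k z\|_{L^2}$, essentially as in Lemma \ref{classical-commutator-estimates}. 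Multiplying by $2^{sj}$, summing over $j\geq J_1-1$, and using $\|\nabla w\|_{L^\infty}\lesssim\|\nabla w\|_{\dot B^{d/p}_{p,1}}$ together with the fact that only $z$'s high-frequency dyadic blocks are effectively summed (their low-frequency tail being absorbed into other terms below) produces the first term $\|\nabla w\|_{\dot B^{d/p}_{p,1}}\|z\|_{\dot B^{s-1}_{2,1}}^h$.

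Second, the paraproduct $T_z w$ and remainder $R(w,z)$ pieces are bounded by Hölder with $1/p+1/p^*=1/2$ (which is exactly why $p\leq 4$ is imposed) combined with Bernstein's inequality. I would split each interacting dyadic sum according to whether $\dot\Delta_k z$ (resp.\ $\dot\Delta_{k'} w$) lies in low frequency ($\leq J_1$) or high frequency ($\geq J_1$). A low-frequency $z$-block converts from $L^p$ to $L^{p^*}$ by Bernstein with gain $2^{kd/p^*}$, and pairs with a low-frequency $w$-block in $L^p$ to fit in $L^2$, generating (after the global $\sum_{j\geq J_1-1}2^{sj}$) the two low-low contributions $2^{(s-\eta_1)J_1}\|z\|_{\dot B^{d/p-d/p^*}_{p,1}}^\ell\|w\|_{\dot B^{\eta_1}_{p,1}}^\ell$ and $2^{(s-\eta_2-1)J_1}\|z\|_{\dot B^{\eta_2}_{p,1}}^\ell\|\nabla w\|_{\dot B^{d/p-d/p^*}_{p,1}}^\ell$, where $\eta_1,\eta_2$ are the low-frequency regularities one chooses for $w$ and $z$. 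Mixed low-high pairings (low $z$ vs.\ high $w$) are treated directly via $L^\infty\cdot L^2\hookrightarrow L^2$ using $\dot B^{d/p}_{p,1}\hookrightarrow L^\infty$, with the free parameter $k\geq 0$ encoding how much smoothness one transfers; this yields the third term $\|z\|_{\dot B^{d/p-k}_{p,1}}\|w\|_{\dot B^{s+k}_{2,1}}^h$.

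The main obstacle is the delicate bookkeeping near the interface $j\sim J_1$. The characteristic factors $2^{(s-\eta_i)J_1}$ arise because, to pull a low-frequency contribution (with regularity $\eta_i$, measured in $L^p$) through the weight $2^{sj}$ summed over high indices $j\geq J_1-1$, one must pay precisely this regularity shift at the cutoff scale; symmetrically, the factor $2^{-J_1}$ accompanying $\nabla w$ in the fourth term reflects one extra derivative on $w$. Ensuring that every combination of low/high for both factors is accounted for, and that the residual exponents in each of the four RHS terms are sharp, is where the argument requires the most care. The freedom in the parameters $k\geq 0$, $\eta_1\geq s$, $\eta_2\in\mathbb{R}$ is precisely the flexibility allowed by interpolating between Bernstein and Hölder when distributing regularity across the two factors.
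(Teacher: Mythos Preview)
The paper does not supply its own proof of this lemma: it is stated in the Appendix with the citation \cite{Barat-Danchin,Barat-Danchin2023} and no argument, as one of several technical tools imported from the literature. So there is nothing in the paper to compare against directly.

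That said, your sketch follows precisely the standard route used in those references: Bony's decomposition $wz=T_wz+T_zw+R(w,z)$, the Taylor-expansion commutator estimate for the $T_wz$ piece to gain one derivative, and H\"older with the pairing $1/p+1/{p^*}=1/2$ (whence the restriction $p\le 4$, since Bernstein from $L^p$ to $L^{p^*}$ requires $p\le p^*$) for the remaining pieces, all combined with a low/high splitting at the threshold $J_1$. The four terms on the right-hand side correspond exactly to the four cases (high~$z$/full~$w$ via commutator, low~$z$/low~$w$, full~$z$/high~$w$, low~$z$/low~$\nabla w$) that your case analysis identifies, and the prefactors $2^{(s-\eta_i)J_1}$ arise for the reason you state. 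The one place where your sketch is a little thin is the assertion that the low-frequency tail of $z$ in the commutator piece is ``absorbed into other terms below''; in the actual proof one must track that contribution explicitly and check it lands in one of the low-low terms, but this is a bookkeeping matter rather than a genuine obstacle. Your outline is correct and matches the approach in the cited works.
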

To establish the evolution of the $\dot B_{p,\infty}^{\sigma_1}$-norm for low frequencies, we also need the product laws in the case that the third index of Besov space is equal to $\infty$.
\begin{Lemma}\cite{xin-xu}\label{negative product estimate}
Let $-\frac dp\leq\sigma_1<\frac dp$ and $p\geq2$. 
Then it holds that
\begin{equation*}
\begin{aligned}
\|fg\|_{\dot{B}_{p,\infty}^{\sigma_1}}
\lesssim\|f\|_{\dot{B}_{p,1}^{\frac dp}}
\|g\|_{\dot{B}_{p,\infty}^{\sigma_1}}.
\end{aligned}
\end{equation*}
\end{Lemma}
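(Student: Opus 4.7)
The natural strategy is Bony's paraproduct decomposition
\[
fg \;=\; T_f g + T_g f + R(f,g),
\]
with $T_f g \triangleq \sum_j \dot S_{j-1}f\,\dot\Delta_j g$ and
$R(f,g)\triangleq\sum_j \dot\Delta_j f\,\widetilde{\dot\Delta}_j g$ (where $\widetilde{\dot\Delta}_j \triangleq \dot\Delta_{j-1}+\dot\Delta_j+\dot\Delta_{j+1}$), and then to bound each of the three pieces separately in $\dot B^{\sigma_1}_{p,\infty}$. Each term turns out to engage exactly one of the hypotheses: the embedding $\dot B^{d/p}_{p,1}\hookrightarrow L^\infty$ powers $T_f g$, the strict inequality $\sigma_1<d/p$ powers $T_g f$, and the pair $(p\geq 2,\;\sigma_1\geq -d/p)$ powers the remainder.

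For $T_f g$, quasi-orthogonality localizes $\dot\Delta_{j'}(T_fg)$ to blocks with $|j-j'|\lesssim 1$, so
$\|\dot\Delta_{j'}(T_fg)\|_{L^p}\lesssim \sum_{|j-j'|\leq 4}\|\dot S_{j-1}f\|_{L^\infty}\|\dot\Delta_j g\|_{L^p}$; combining with the embedding and taking $\sup_{j'} 2^{j'\sigma_1}(\cdot)$ yields the bound. For $T_g f$ the roles are reversed, and the key observation is
\[
\|\dot S_{j-1}g\|_{L^\infty}\;\lesssim\;\sum_{k\leq j-2}2^{kd/p}\|\dot\Delta_k g\|_{L^p}\;\lesssim\;\Big(\sum_{k\leq j-2}2^{k(d/p-\sigma_1)}\Big)\|g\|_{\dot B^{\sigma_1}_{p,\infty}}\;\lesssim\;2^{j(d/p-\sigma_1)}\|g\|_{\dot B^{\sigma_1}_{p,\infty}},
\]
where the geometric series converges \emph{precisely} because $\sigma_1<d/p$. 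Pairing with $\|\dot\Delta_j f\|_{L^p}\leq c_j\,2^{-jd/p}\|f\|_{\dot B^{d/p}_{p,1}}$ for some $(c_j)\in\ell^1$ and using $|j-j'|\leq 4$ gives the needed control.

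The main obstacle is the remainder $R(f,g)$, whose outputs can live at any dyadic frequency $j'\leq j+N_0$, so for each $j'$ one must sum over all $j\geq j'-N_0$. Since both factors live in $L^p$, the standard Hölder inequality would call for an $L^{p'}$ factor that is not available. The hypothesis $p\geq 2$ enters precisely here: one applies Hölder in $L^{p/2}$ (legitimate since $p/2\geq 1$) and then uses Bernstein's inequality (Lemma~\ref{lemma21}) to climb from $L^{p/2}$ back to $L^p$ at the price of a factor $2^{jd/p}$, producing
\[
\|\dot\Delta_j f\cdot\widetilde{\dot\Delta}_j g\|_{L^p}\;\lesssim\;2^{jd/p}\|\dot\Delta_j f\|_{L^p}\|\widetilde{\dot\Delta}_j g\|_{L^p}\;\lesssim\;c_j\,2^{-j\sigma_1}\|f\|_{\dot B^{d/p}_{p,1}}\|g\|_{\dot B^{\sigma_1}_{p,\infty}}.
\]
Multiplying by $2^{j'\sigma_1}$ and summing over $j\geq j'-N_0$ reduces matters to bounding $\sum_{j\geq j'-N_0}c_j\,2^{(j'-j)\sigma_1}$ uniformly in $j'$. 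This is immediate when $\sigma_1\geq 0$; for $-d/p\leq\sigma_1<0$ the delicate point is that the weights $2^{(j'-j)\sigma_1}$ grow in $j-j'$, and the uniform bound is saved by exploiting that the target uses an $\ell^\infty$ (rather than $\ell^1$) norm on $j'$, together with the $\ell^1$ summability of $(c_j)$. The endpoint $\sigma_1=-d/p$ is the genuinely delicate case and is where the constraint is sharp; I expect this to be the only nontrivial step in the argument, with all other bounds being essentially mechanical paraproduct computations.
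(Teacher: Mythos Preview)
The paper does not supply a proof of this lemma; it is quoted from \cite{xin-xu}. Your Bony-decomposition strategy is the standard one, and the treatments of $T_f g$ and $T_g f$ are fine. The remainder estimate, however, has a genuine gap, and it is not confined to the endpoint $\sigma_1=-d/p$ as you suggest.

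You estimate $\|\dot\Delta_j f\cdot\widetilde{\dot\Delta}_j g\|_{L^p}\lesssim 2^{jd/p}\|\dot\Delta_j f\|_{L^p}\|\widetilde{\dot\Delta}_j g\|_{L^p}$ by applying Bernstein to the product, which is supported in a ball of radius $\sim 2^j$. This reduces the remainder bound to controlling $\sup_{j'}\sum_{j\geq j'-N_0} c_j\,2^{(j'-j)\sigma_1}$ with $(c_j)\in\ell^1$. For any $\sigma_1<0$ this supremum is \emph{not} finite in general: take $c_j=\frac{1}{N+1}\mathbf{1}_{0\leq j\leq N}$ and $j'=0$; the sum is $\sim 2^{N|\sigma_1|}/N\to\infty$. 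Neither the $\ell^1$ summability of $(c_j)$ nor the $\ell^\infty$ norm in $j'$ rescues this, so your claimed bound for $-d/p\leq\sigma_1<0$ does not follow.

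The fix is to apply Bernstein \emph{after} projecting onto $\dot\Delta_{j'}$, so that the gain is $2^{j'd/p}$ rather than $2^{jd/p}$:
\[
\|\dot\Delta_{j'}(\dot\Delta_j f\cdot\widetilde{\dot\Delta}_j g)\|_{L^p}
\lesssim 2^{j'd/p}\|\dot\Delta_j f\|_{L^p}\|\widetilde{\dot\Delta}_j g\|_{L^p}
\lesssim c_j\,2^{j'd/p}\,2^{-j(d/p+\sigma_1)}\|f\|_{\dot B^{d/p}_{p,1}}\|g\|_{\dot B^{\sigma_1}_{p,\infty}}.
\]
Then $2^{j'\sigma_1}\|\dot\Delta_{j'}R(f,g)\|_{L^p}\lesssim\|f\|_{\dot B^{d/p}_{p,1}}\|g\|_{\dot B^{\sigma_1}_{p,\infty}}\sum_{j\geq j'-N_0}c_j\,2^{(j'-j)(\sigma_1+d/p)}$, and since $\sigma_1+d/p\geq 0$ and $j'-j\leq N_0$, the weight is bounded by $2^{N_0(\sigma_1+d/p)}$, so the sum is $\lesssim\sum_j c_j\leq 1$ uniformly in $j'$. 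This handles the full range $-d/p\leq\sigma_1<d/p$, including the endpoint, in one stroke.
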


Here are the lower bounds of an integral involving the fractional power dissipative term.
\begin{Lemma}\cite{Jiahong Wu}
\label{low-fra-Lp}
Assume either $0\leq\alpha$ and $p=2$ or $0\leq\alpha\leq1$ and $2<p<\infty.$
If $f\in\cC^2(\R^{d})$
decays sufficiently fast at infinity and satisfies
$$\text{Supp}~\hat{f}\subset \Big{\{}\xi\in \R^{d}~|~K_12^j\leq |\xi|\leq K_22^j \Big{\}}$$
for some $K_1,K_2>0$ and some integer $j,$ then
$$D(f)\equiv\int_{\R^{d}}|f|^{p-2}f\cdot(-\Delta)^\alpha f\,dx\geq C2^{2\alpha j}\|f\|_{L^p(\R^{d})}^p,$$
where $C$ is a constant depending on $d,p,K_1$ and $K_2$ only.
\end{Lemma}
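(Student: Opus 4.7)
The plan is to handle the two cases of $p$ separately.

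\emph{Case $p=2$, $\alpha \ge 0$.} This case reduces to Plancherel: since $|\xi| \ge K_1 2^j$ on $\text{Supp}\,\hat f$,
\[
D(f) = \int_{\R^d}|\xi|^{2\alpha}|\hat f(\xi)|^2 \,d\xi \;\ge\; K_1^{2\alpha}\,2^{2\alpha j}\int_{\R^d}|\hat f(\xi)|^2 \,d\xi = K_1^{2\alpha}\,2^{2\alpha j}\|f\|_{L^2}^2.
\]
No hypothesis on the size of $\alpha$ is needed here.

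\emph{Case $2<p<\infty$, $0 \le \alpha \le 1$.} Plancherel is unavailable, so the plan is to use the fractional-heat semigroup $P_t := e^{-t(-\Delta)^\alpha}$, which for $\alpha \in (0,1]$ is, via subordination to the usual heat semigroup, a symmetric Markov semigroup and in particular an $L^p$-contraction. Setting $u(t) := P_t f$ and multiplying $\partial_t u + (-\Delta)^\alpha u = 0$ by $|u|^{p-2}u$, an integration gives the $L^p$-energy identity
\[
\tfrac{1}{p}\tfrac{d}{dt}\|u(t)\|_{L^p}^p = -D(u(t)).
\]
In parallel, I would establish for spectrally localized data the pure exponential decay
\[
\|P_t f\|_{L^p} \;\le\; e^{-c_0 t\, 2^{2\alpha j}}\,\|f\|_{L^p}, \qquad t \ge 0,
\]
with $c_0 = c_0(d,p,K_1,K_2) > 0$. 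Then $\|u(t)\|_{L^p}^p \le e^{-p c_0 t\, 2^{2\alpha j}}\|f\|_{L^p}^p$, and differentiating at $t = 0^+$ and comparing with the energy identity yields $D(f) \ge c_0\, 2^{2\alpha j}\|f\|_{L^p}^p$, which is exactly the desired bound.

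\emph{Main obstacle.} The delicate step is the pure exponential decay above with leading constant exactly $1$: that constant is what licenses differentiation at $t=0$. A naive convolution estimate writes $P_t f = K_{t,j}*f$ with kernel $K_{t,j}(x) = 2^{jd}\tilde K_s(2^j x)$ for $s := t\, 2^{2\alpha j}$ and $\tilde K_s = \mathcal{F}^{-1}(e^{-s|\eta|^{2\alpha}}\tilde\varphi(\eta))$, where $\tilde\varphi$ is a cutoff equal to $1$ on $\{K_1 \le |\eta| \le K_2\}$. Integration by parts on the Fourier side produces $\|\tilde K_s\|_{L^1} \lesssim (1+s)^N e^{-c s}$, whence $\|P_t f\|_{L^p} \le C_0\, e^{-c_0 t\, 2^{2\alpha j}}\|f\|_{L^p}$ with some $C_0 \ge 1$. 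To remove the prefactor, I would couple this bound (effective for $s \gtrsim 1$) with the unconditional contractivity $\|P_t f\|_{L^p} \le \|f\|_{L^p}$ (effective for $s \lesssim 1$) and bootstrap through the semigroup law $P_{t+s} = P_t P_s$. As an alternative route I would, following the Córdoba--Córdoba circle of ideas, symmetrize
\[
D(f) = \tfrac{c_{d,\alpha}}{2}\iint \frac{\bigl(|f(x)|^{p-2}f(x) - |f(y)|^{p-2}f(y)\bigr)\bigl(f(x)-f(y)\bigr)}{|x-y|^{d+2\alpha}}\,dx\,dy,
\]
apply the pointwise inequality $(|a|^{p-2}a - |b|^{p-2}b)(a-b) \gtrsim |a-b|^p$ for $p \ge 2$ to bound $D(f)$ below by a multiple of the Gagliardo seminorm $[f]_{\dot W^{\alpha,p}}^p$, and then invoke a Bernstein-type lower bound $[f]_{\dot W^{\alpha,p}}^p \gtrsim 2^{\alpha p j}\|f\|_{L^p}^p$, valid because $f$ is Fourier-supported in the annulus $|\xi| \sim 2^j$.
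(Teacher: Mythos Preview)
The paper does not prove this lemma: it is quoted verbatim from Wu's paper and stated in the appendix without argument. So there is no ``paper's proof'' to compare against; I evaluate your two routes on their own merits.

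Your $p=2$ case is correct. Your semigroup route for $p>2$, however, has a genuine gap precisely at the point you flag. The bootstrap you propose---combining contractivity $\|P_tf\|_{L^p}\le\|f\|_{L^p}$ with the crude decay $\|P_tf\|_{L^p}\le C_0e^{-c_0s}\|f\|_{L^p}$ through the semigroup law---cannot produce leading constant exactly $1$. Iterating yields at best $\|P_tf\|_{L^p}\le C_0'e^{-c_0's}\|f\|_{L^p}$ with some $C_0'\ge 1$; for small $s$ contractivity alone gives only $\|P_tf\|_{L^p}\le\|f\|_{L^p}$, and no combination of these two bounds forces $\|P_tf\|_{L^p}\le e^{-cs}\|f\|_{L^p}$ near $s=0$. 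Differentiating an inequality with prefactor $>1$ at $t=0^+$ is vacuous (the right side exceeds the left at $t=0$). In fact the pure exponential bound with constant $1$ is \emph{equivalent} to the dissipation inequality you are trying to prove, so this route is circular without an extra idea.

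Your alternative route via the singular--integral representation is the correct one, and it is in the spirit of Wu's own proof, but you have an index slip. The double integral you wrote, with denominator $|x-y|^{d+2\alpha}$, is (up to a constant) $[f]_{\dot W^{s,p}}^p$ with $s=2\alpha/p$, not $s=\alpha$. For $f$ Fourier--supported in the annulus $|\xi|\sim 2^j$, only $O(1)$ dyadic blocks of $f$ survive, so the identification $\dot W^{s,p}=\dot B^{s}_{p,p}$ for $0<s<1$ gives $[f]_{\dot W^{2\alpha/p,p}}^p\sim 2^{2\alpha j}\|f\|_{L^p}^p$, which is exactly the desired power; your claimed $2^{\alpha p j}$ is too strong and actually false, since H\"older plus the Bernstein \emph{upper} bound give $D(f)\lesssim 2^{2\alpha j}\|f\|_{L^p}^p$. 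With this correction (and handling $\alpha=1$ separately via $D(f)=(p-1)\int|f|^{p-2}|\nabla f|^2$ together with the standard Bernstein argument), the proof goes through.
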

Next one is a standard lemma of some differential inequality.
\begin{Lemma}\cite{xu-2017}\label{difference-delta}
Let $X:[0,T]\rightarrow\R_+$ be a continuous function such that $X^p$ is
differentiable for some $p\geq1$ and satisfies
$$\frac1p\frac d{dt}X^p(t)+aX^p(t)\leq AX^{p-1}(t)$$
for some constant $a\geq0$ and measurable function
$A:[0,T]\rightarrow\R_+.$
Then it holds that
$$X(t)+a\int_0^tX(\tau)\,d\tau\leq X(0)+\int_0^tA(\tau)d\tau.$$
\end{Lemma}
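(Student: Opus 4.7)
The plan is to reduce the inequality on $X^p$ to a linear Grönwall-type estimate on $X$ itself by a standard regularization. The natural formal manipulation, namely dividing by $X^{p-1}$ so that $\frac{1}{p}X^{1-p}(X^p)'$ collapses to $X'$ and the hypothesis becomes $X'+aX \leq A$, cannot be carried out directly because $X$ need not be differentiable at points where $X=0$ (only $X^p$ is assumed to be differentiable). To handle this, I would introduce $Z_\epsilon(t) \triangleq (X^p(t)+\epsilon)^{1/p}$ for $\epsilon>0$, which is differentiable on $[0,T]$ and bounded below by $\epsilon^{1/p}>0$.

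Differentiating via the chain rule gives
$$Z_\epsilon^{p-1}(t)\,Z_\epsilon'(t) = \tfrac{1}{p}\tfrac{d}{dt}X^p(t).$$
Substituting into the hypothesis and using $Z_\epsilon \geq X \geq 0$ together with $Z_\epsilon^p - X^p = \epsilon$, I would obtain
$$Z_\epsilon^{p-1}Z_\epsilon' + a Z_\epsilon^p \leq A X^{p-1} + a\epsilon \leq A Z_\epsilon^{p-1} + a\epsilon.$$
Dividing by $Z_\epsilon^{p-1} \geq \epsilon^{(p-1)/p}$ yields the linear first-order differential inequality
$$Z_\epsilon' + a Z_\epsilon \leq A + a\epsilon^{1/p},$$
which, by integration on $[0,t]$ together with the bound $Z_\epsilon(0) \leq X(0)+\epsilon^{1/p}$, gives
$$Z_\epsilon(t) + a\int_0^t Z_\epsilon(\tau)\,d\tau \leq X(0) + \int_0^t A(\tau)\,d\tau + (1+at)\,\epsilon^{1/p}.$$

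Finally, I would pass to the limit $\epsilon \to 0^+$. Since $\epsilon \mapsto Z_\epsilon$ is monotone and $Z_\epsilon \downarrow X$ pointwise, monotone convergence on the integral and pointwise convergence on $Z_\epsilon(t)$ yield the claimed inequality. The main technical obstacle, which the regularization is designed to resolve, is precisely the case $p>1$ at points where $X$ vanishes: there one cannot chain-rule $(X^p)^{1/p}$ to recover $X'$, but the uniform lower bound $Z_\epsilon \geq \epsilon^{1/p}$ makes every step above rigorous for each $\epsilon>0$ and the correction term $(1+at)\epsilon^{1/p}$ disappears in the limit.
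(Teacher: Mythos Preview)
Your proof is correct. The paper does not supply its own proof of this lemma; it is stated in the Appendix with a citation to \cite{xu-2017} and used as a black box throughout. Your regularization via $Z_\epsilon=(X^p+\epsilon)^{1/p}$ is the standard way to make rigorous the formal division by $X^{p-1}$, and every step you wrote checks out: the chain rule for $Z_\epsilon$, the inequality $(X^p(0)+\epsilon)^{1/p}\le X(0)+\epsilon^{1/p}$ for $p\ge 1$, the bound $a\epsilon/Z_\epsilon^{p-1}\le a\epsilon^{1/p}$, and the monotone passage to the limit.
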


\vspace{1mm}

\bigbreak

\noindent
\textbf{Funding.}  L.-Y. Shou is supported by National Natural Science Foundation of China (12301275) and the project funded by China Postdoctoral Science Foundation  (2023M741694). J. Xu is partially supported by National Natural Science Foundation of China (12271250, 12031006) and the Fundamental Research Funds for the Central Universities, NO. NP2024105.

\vspace{2mm}

\noindent
\textbf{Conflict of interest.} The authors do not have any possible conflict of interest.

\vspace{2mm}

\vspace{2mm}

\noindent
\textbf{Data availability statement.}
 Data sharing not applicable to this article as no data sets were generated or analysed during the current study.

\end{document}